\numberwithin{equation}{section}
\theoremstyle{plain}
\newtheorem{prop}{Proposition}
\newtheorem{theo}[prop]{Theorem}
\newtheorem{coro}[prop]{Corollary}
\newtheorem{lemm}[prop]{Lemma}
\theoremstyle{definition}
\newtheorem{defn}[prop]{Definition}
\newtheorem{conj}[prop]{Conjecture}
\newtheorem{rema}[prop]{Remark}
\def\Fr{{\rm Fr}}
\def\tr{{\rm tr}}
\def\Br{{\rm Br}}
\def\Gal{{\rm G}}
\def\G{{\mathcal G}}   
\def\D{{\mathcal D}}   
\def\I{{\mathcal I}}   
\def\K{{\mathrm K}}
\def\pic{{\varphi}}
\def\Ga{{\Gamma}}
\def\Val{{\mathcal V}}
\def\DVal{{\mathcal D}{\mathcal V}}
\def\KK{\boldsymbol{K}}
\def\Aut{{\rm Aut}}
\def\Syl{{\rm Syl}}
\def\Frob{{\rm Fr}}
\def\no{\noindent}
\def\rk{{\rm rk}}
\newcommand{\trdeg}{{\rm tr}\, {\rm deg}}
\def\cH{{\mathcal H}}
\def\cT{{\mathcal T}}
\def\rK{{\mathrm  K}}
\def\rH{{\mathrm  H}}
\def\dv{{\rm div}}
\newcommand{\Hom}{{\rm Hom}}
\newcommand{\Ker}{{\rm Ker}}
\def\CH{{\rm CH}}
\def\lra{\longrightarrow}
\def\ra{\rightarrow}
\def\A{{\mathbb A}}
\def\C{{\mathbb C}}
\def\F{{\mathbb F}}
\def\P{{\mathbb P}}
\def\Q{{\mathbb Q}}
\def\Z{{\mathbb Z}}
\def\C{{\mathbb C}}
\def\N{{\mathbb N}}         
\def\R{{\mathbb R}}
\def\ml{{\mathfrak l}}
\def\mL{{\mathfrak L}}
\def\Pic{{\rm Pic}}
\def\Div{{\rm Div}}
\def\SL{{\rm SL}}
\def\GL{{\rm GL}}
\def\PGL{{\rm PGL}}
\def\Sym{{\rm Sym}}
\def\H{{\mathrm H}}
\def\pr{{\rm pr}}
\def\char{{\rm char}}
\def\rB{{\rm B}}
\def\rH{{\rm H}}
\def\rR{{\rm R}}
\begin{document}

\author{Fedor Bogomolov}
\address{Courant Institute of Mathematical Sciences, N.Y.U. \\
 251 Mercer str. \\
 New York, NY 10012, U.S.A.}
\email{bogomolo@cims.nyu.edu}

\author{Yuri Tschinkel}
\address{Courant Institute of Mathematical Sciences, N.Y.U. \\
 251 Mercer str. \\
 New York, NY 10012, U.S.A.}
\email{tschinkel@cims.nyu.edu}

\keywords{Galois groups, function fields}

\title[Birational Anabelian Geometry]{Introduction to birational anabelian geometry}

\begin{abstract}
We survey recent developments in the Birational Anabelian Geometry program
aimed at the reconstruction of function fields of 
algebraic varieties over algebraically closed fields
from pieces of their absolute Galois groups. 
\end{abstract}

\date{\today}

\maketitle
\tableofcontents

\setcounter{section}{0}       
\section*{Introduction}
\label{sect:introduction}

The essence of Galois theory is to {\em lose} information, by passing from 
a field $k$, an algebraic structure with two compatible operations,
to a (profinite) group, its absolute Galois group $G_k$ or 
some of its quotients.  The original goal of testing solvability in radicals 
of polynomial equations in one variable over the rationals 
was superseded by the study of deeper connections between the arithmetic in 
$k$, its ring of integers, and its completions with respect to various
valuations on the one hand, 
and (continuous) representations of $G_k$ on the other hand. 
The discovered structures turned out to be extremely rich, and 
the effort led to the development of deep and fruitful theories:
class field theory (the study of abelian extensions of $k$) and 
its nonabelian generalizations, the Langlands program. 
In fact, techniques from class field theory (Brauer groups)
allowed one to deduce that Galois groups of global fields encode the field:

\begin{theo}[Neukirch-Uchida \cite{neukirch}, \cite{uchida}] 
\label{thm:neukirch}
Let $K$ and $L$ be number fields or 
function fields of curves over finite fields with 
isomorphic Galois groups 
$$
G_{K^{\rm solv}/K}\simeq G_{L^{\rm solv}/L}
$$
of their maximal solvable extensions. 
Then 
$$
L\simeq K.
$$
\end{theo}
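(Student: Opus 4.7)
The plan is to reconstruct the field $K$ from its maximal pro-solvable Galois group $G_{K^{\rm solv}/K}$ in three stages: first identify all non-archimedean decomposition subgroups by purely group-theoretic means, then reconstruct the local and global multiplicative groups via class field theory, and finally recover the additive (and hence field) structure.

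For the first stage, I would characterize decomposition subgroups $D_v \subset G_{K^{\rm solv}/K}$ intrinsically. The local pro-solvable Galois group $D_v$ of a non-archimedean completion $K_v$ fits into an exact sequence involving the inertia subgroup $I_v$ and the pro-solvable Galois group of the residue field, and it carries cohomological signatures reflecting the local Brauer group $\Br(K_v) \simeq \Q/\Z$. Following Neukirch, one identifies $D_v$ as a maximal closed subgroup with these features, exploiting that no strictly larger subgroup of the ambient global group can exhibit the same ``local-type'' cohomological structure. The residue characteristic at $v$ is detected from the pro-$p$ Sylow structure of $I_v$; for the pro-solvable setting, Uchida's refinement invokes Hasse's norm theorem to rigidify the characterization.

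Granted the intrinsic identification of decomposition groups, an isomorphism $\phi: G_{K^{\rm solv}/K} \to G_{L^{\rm solv}/L}$ induces a bijection between non-archimedean primes of $K$ and $L$. Local class field theory then provides canonical reciprocity isomorphisms $K_v^{*} \simeq D_v^{\rm ab}$ (after profinite completion), and assembling these over all places yields a comparison of idele groups $\bA_K^{*} \simeq \bA_L^{*}$. The global reciprocity sequence of class field theory, combined with the intrinsic identification of decomposition groups, cuts out the principal ideles $K^{*}$ inside $\bA_K^{*}$, producing a multiplicative isomorphism $K^{*} \simeq L^{*}$ compatible with all valuations and with the filtrations of principal units at each prime.

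The main obstacle is the final stage: upgrading this multiplicative, local-data-preserving isomorphism to a field isomorphism. The additive structure is not visible in the bare abelian group $K^{*}$, so one must exploit fine arithmetic data: the ring of integers appears as the subset of $K^{*} \cup \{0\}$ having non-negative valuation at every non-archimedean place, the residue fields at each prime are recovered as quotients of the local units, and the addition law is then pinned down by combining these with the higher ramification filtration (also encoded in each $D_v$) and the product formula. Demonstrating that no exotic additive structure on $K^{*} \cup \{0\}$ is compatible with all the reconstructed local and global multiplicative data is the technical heart of the argument, and it is where the restriction to the pro-solvable quotient demands the greatest care.
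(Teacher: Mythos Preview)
The paper does not give its own proof of this theorem. It is stated in the Introduction as a classical result, with citations to the original papers of Neukirch and Uchida, and serves only as historical motivation for the birational anabelian program surveyed later. There is therefore nothing in the paper to compare your proposal against.

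That said, your outline is broadly faithful to the classical Neukirch--Uchida strategy: a group-theoretic characterization of decomposition subgroups, reconstruction of local and global multiplicative data via class field theory, and then recovery of the additive structure. Your description of the first two stages is reasonable at the level of a sketch. The third stage, however, is where your account is thinnest and where the actual proofs do the real work; phrases like ``the addition law is then pinned down by combining these with the higher ramification filtration \ldots\ and the product formula'' gloss over the genuinely delicate part. In Uchida's argument, for instance, one does not directly reconstruct addition from valuation-theoretic data on $K^{\times}$; rather, one shows that the induced bijection on primes preserves enough arithmetic (zeta functions, splitting behavior in finite extensions) to force an isomorphism of fields via more indirect means. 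If you intend this as more than a pointer to the literature, that final stage needs to be made precise.
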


In another, more geometric direction, 
Galois theory was subsumed in the theory of the \'etale
fundamental group.  Let $X$ be an algebraic variety over a field $k$. Fix an algebraic closure $\bar{k}/k$ and let $K=k(X)$ be the function field of $X$. 
We have an associated exact sequence
\begin{equation}
1\ra \pi_1(X_{\bar{k}})\ra \pi_1(X)\stackrel{\pr_X}{\lra} G_k\ra 1  \tag{$\Psi_X$}
\end{equation}
of \'etale fundamental groups, exhibiting an action of 
the Galois group of the ground field $k$ on the {\em geometric fundamental group} 
$\pi_1(X_{\bar{k}})$. Similarly, we have an exact sequence of Galois groups
\begin{equation}
1\ra G_{\bar{k}(X)}\ra G_{K}\stackrel{\pr_{K}}{\lra} G_k\ra 1. \tag{$\Psi_K$}
\end{equation}
Each $k$-rational point on $X$ gives rise to a section of $\pr_X$ and $\pr_K$. 

When $X$ is a smooth projective curve of genus $\mathsf g\ge 2$, 
its geometric fundamental group $\pi_1(X_{\bar{k}})$ is a profinite group in 
$2\mathsf g$ generators subject to one relation. 
Over fields of characteristic zero, 
these groups depend only on $\mathsf g$ but not
on the curve.  
However, the sequence  ($\Psi_X$) gives rise to a plethora of representations of $G_k$ and 
the resulting configuration is so strongly 
rigid\footnote{``ausserordentlich stark'', as Grothendieck puts it in \cite{groth-letter}}   
that it is natural to expect that it encodes 
much of the geometry and arithmetic of $X$ over $k$.  

For example, let $k$ be a 
{\em finite field} and $X$ an abelian variety over $k$ of dimension $\mathsf g$.
Then $G_k$ is the procyclic group $\hat{\Z}$, generated by the Frobenius,  
which acts on the Tate module 
$$
T_{\ell}(X)=\pi^a_{1,\ell}(X_{\bar{k}})\simeq \Z_{\ell}^{2\mathsf g},
$$
where $\pi^a_{1,\ell}(X_{\bar{k}})$ is the $\ell$-adic quotient of the abelianization
$\pi^a_{1}(X_{\bar{k}})$ of the
\'etale fundamental group.
By a theorem of Tate~\cite{tate}, 
the characteristic polynomial of the Frobenius determines $X$, up to isogeny. 
Moreover, if $X$ and $Y$ are abelian varieties over $k$ then
$$
\Hom_{G_k}(T_{\ell}(X), T_{\ell}(Y)) \simeq \Hom_k(X,Y)\otimes \Z_{\ell}.   
$$
Similarly, if $k$ is a {\em number field} and $X,Y$ abelian varieties over $k$ then 
$$
\Hom_{G_k}(\pi_1^a(X), \pi_1^a(Y)) \simeq \Hom_k(X,Y)\otimes \hat{\Z},
$$
by a theorem of Faltings~\cite{faltings-tate}.

\

With these results at hand, Grothendieck conjectured in \cite{groth-letter} 
that there is a certain class of {\em anabelian} varieties, defined over a field
$k$ (which is finitely generated over its prime field), which are
{\em characterized} by their fundamental groups. 
Main candidates are hyperbolic curves and varieties which can be successively
fibered by hyperbolic curves.
There are three related conjectures:

\

\noindent
{\bf Isom:} An anabelian variety $X$ is determined by $(\Psi_X)$, i.e., by
the profinite group $\pi_1(X)$ together with the action of $G_k$.  

\ 

\noindent
{\bf Hom:} If $X$ and $Y$ are anabelian, then 
there is a bijection 
$$
\Hom_k(X,Y) = \Hom_{G_k}(\pi_1(X),\pi_1(Y))/\sim
$$
between the set of dominant $k$-morphisms and $G_k$-equivariant
open homomorphisms of fundamental groups, modulo conjugacy (inner automorphisms by 
the geometric fundamental group of $Y$).

\

\noindent
{\bf Sections:} If $X$ is anabelian then there is a bijection between 
the set of rational points $X(k)$ and the set of 
sections of $\pr_X$ (modulo conjugacy).    

\

Similar conjectures can be made for nonproper varieties. Excising points from 
curves makes them ``more'' hyperbolic. Thus, one may reduce to the generic point of
$X$, replacing the fundamental group by the Galois group of the function field $K=k(X)$. 
In the resulting {\em birational} version of Grothendieck's conjectures, 
the exact sequence ($\Psi_X$) is replaced by
($\Psi_K$) and the projection $\pr_X$ by $\pr_K$. 

\

These conjectures have generated wide interest and stimulated intense research.  
Here are some of the highlights of these efforts:
\begin{itemize}
\item proof of the birational {\bf Isom}-conjecture for function fields over
$k$, where $k$ is finitely generated over its prime field, by Pop \cite{pop};
\item proof of the birational {\bf Hom}-conjecture over sub-$p$-adic fields
$k$, i.e., $k$ which are contained in a finitely generated extension of $\Q_p$,
by Mochizuki \cite{mochizuki};
\item 
proof of the birational {\bf Section}-conjecture for local fields of characteristic zero, by 
K\"onigsmann \cite{koenigsmann}.
\end{itemize}
Here is an incomplete list of other significant result in this area  \cite{N}, \cite{V2},
\cite{V91}, \cite{T}. 
In all cases, the proofs relied on {\em nonabelian} properties in the structure of 
the Galois group $G_K$, respectively, the relative Galois group. 
Some of these developments were surveyed in \cite{ihara-naka}, \cite{faltings}, 
\cite{Na-Mo}, \cite{pop-alter}, \cite{P-2}, \cite{mochi-topics}. 

After the work of Iwasawa the study of representations
of the maximal pro-$\ell$-quotient $\G_K$ of the absolute Galois group $G_K$ 
developed into a major branch of number theory and geometry. So it was natural to 
turn to pro-$\ell$-versions of the hyperbolic anabelian conjectures, replacing 
the fundamental groups by their maximal pro-$\ell$-quotients and 
the absolute Galois group $G_K$ by $\G_K$. 
Several results in this direction were obtained in  \cite{corry-pop}, \cite{saidi-tamagawa}. 

\

A very different intuition evolved from higher-dimensional 
birational algebraic geometry. One of the basic questions in this area
is the characterization of fields isomorphic to purely transcendental extensions of
the ground field, i.e., varieties birational to projective space. Interesting examples
of function fields arise from faithful representations of {\em finite} groups
$$
G \ra  \Aut(V),
$$
where $V=\A^n_k$ is the standard affine space over $k$.
The corresponding variety
$$
X=V/G
$$  
is clearly unirational. When $n\le 2$ and $k$ is algebraically closed 
the quotient is rational (even though there exist unirational but nonrational surfaces
in positive characteristic). The quotient is also rational when $G$ is abelian 
and $k$ algebraically closed. 

Noether's problem (inspired by invariant 
theory and the inverse problem in Galois theory) 
asks whether or not $X=V/G$ is rational for nonabelian groups.
The first counterexamples were constructed by Saltman \cite{saltman}. 
Geometrically, they are quotients of products of projective spaces by
projective actions of finite {\em abelian groups}.
The first obstruction to (retract) rationality was described in terms of 
Azumaya algebras and the {\em unramified} Brauer group
$$
\Br_{nr}(k(X))=\rH^2_{nr}(X),
$$
(see Section~\ref{sect:group}). 
A group cohomological interpretation of these examples 
was given by the first author in \cite{bog-87}; it allowed one to generate many other examples
and elucidated the key structural properties of the obstruction group. 
This obstruction can be computed in terms of $G$, in particular, it does not
depend on the chosen representation $V$ of $G$:
\begin{equation*}
\label{eqn:br}
\rB_0(G):=\Ker\left( \rH^2(G,\Q/\Z)\ra \prod_{B} \rH^2(B,\Q/\Z) \right),
\end{equation*}
where the product ranges over the set of subgroups $B\subset G$ which are 
generated by {\em two commuting} elements. 
A key fact is that, for $X=V/G$,  
$$
\rB_0(G)= \Br_{nr}(k(X))=\rH^2_{nr}(X),
$$
see Section~\ref{sect:group} and Theorem~\ref{thm:b0g}. 

\

One has a decomposition into primary components
\begin{equation} 
\rB_0(G)=\oplus_{\ell}\,\, \rB_{0,\ell}(G),
\end{equation}
and computation of each piece reduces
to computations of cohomology of the $\ell$-Sylow subgroups of $G$,
with coefficients in $\Q_{\ell}/\Z_{\ell}$.

\

We now restrict to this case, i.e., finite $\ell$-groups $G$ and $\Q_{\ell}/\Z_{\ell}$-coefficients.  
Consider the exact sequence
$$
1\ra Z\ra G^c\ra G^a\ra 1,
$$
where 
$$
G^c=G/[[G,G],G]
$$
is the canonical central extension of the abelianization 
$$
G^a=G/[G,G].
$$
We have
\begin{equation}
\label{eqn:bgg}
\rB_0(G^c)\hookrightarrow \rB_0(G)
\end{equation}
(see Section~\ref{sect:group}); in general, the image is a {\em proper} subgroup.  
The computation of $\rB_0(G^c)$ is
a problem in linear algebra:  
We have a well-defined map 
(from ``skew-symmetric matrices'' on $G^a$, 
considered as a linear space over $\Z/{\ell}$)  to the center of $G^c$: 
$$
\begin{array}{ccc}
\wedge^2(G^a)  & \stackrel{\lambda}{\lra} &  Z\\
(\gamma_1,\gamma_2)& \mapsto & [\tilde{\gamma}_1,\tilde{\gamma}_2],
\end{array}
$$ 
where $\tilde{\gamma}$ is some lift of $\gamma\in G^a$ to $G^c$.
Let 
$$
\rR(G^c):=\Ker(\lambda)
$$ 
be the subgroup of relations in $\wedge^2(G^a)$ 
(the subgroup generated by ``matrices'' of rank one).   
We say that $\gamma_1,\gamma_2$ form a {\em commuting pair}
if 
$$
[\tilde{\gamma}_1,\tilde{\gamma}_2]=1\in Z.
$$
Let 
$$
\rR_{\wedge}(G^c):=\langle \gamma_1\wedge \gamma_2\rangle \subset \rR(G^c)
$$ 
be the subgroup generated by commuting pairs. 
The first author proved in \cite{bog-87} that 
$$
\rB_0(G^c)=\left(\rR(G^c)/\rR_{\wedge}(G^c)\right)^{\vee}.
$$
Using this representation it is easy to produce examples
with nonvanishing $\rB_{0}(G)$, thus nonrational fields of $G$-invariants, 
already for central extensions of $(\Z/\ell)^4$ by $(\Z/\ell)^3$ \cite{bog-87}.

\

Note that for $K=k(V)^{G}$ the group $G$ is naturally a quotient of the absolute Galois group $G_K$. 
The sketched arguments from group cohomology suggested to focus on 
$\G_K$, the pro-$\ell$-quotient of $G_K$ and 
the pro-$\ell$-cohomology groups introduced above.
The theory of {\em commuting pairs} explained in Section~\ref{sect:valuations} 
implies that the groups $\G_K$ are very special: 
for any  function field $K$ over an algebraically closed field one has
$$
\rB_{0,\ell}(G_K)= \rB_0(\G_K)=\rB_0(\G_K^c).
$$

\

This lead to a {\em dismantling} of nonabelian aspects of 
anabelian geometry. For example, from this point of view it is unnecessary to assume 
that the Galois group of the ground field $k$ is large. On the contrary, 
it is preferable if $k$ is algebraically closed, or at least contains all $\ell^n$-th roots of 1. 
More significantly, while the {\em hyperbolic anabelian geometry} has dealt primarily with curves $C$, 
the corresponding $\rB_0(\G_{k(C)})$, and hence $\rB_0(\G^c_{k(C)})$,  are {\em trivial}, 
since the $\ell$-Sylow subgroups of $G_{k(C)}$ are free. 
Thus we need to consider function 
fields $K$ of transcendence degree at least 2 over $k$. 
It became apparent that in these cases, at least over $k=\bar{\F}_p$, 
$$
\rB_0(\G_K^c) = \rH^2_{nr}(k(X))
$$
encodes a wealth of information about $k(X)$. In particular, it determines all higher unramified
cohomological invariants of $X$ (see Section~\ref{sect:bloch-kato}).

Let $p$ and $\ell$ be distinct primes and $k=\bar{\mathbb F}_p$ an algebraic closure of $\mathbb F_p$.
Let  $X$ be an algebraic variety over $k$ and $K=k(X)$ its function field 
($X$ will be called a {\em model} of $K$).   
In this situation, $\G_K^a$ is a torsion-free $\Z_{\ell}$-module. 
Let $\Sigma_K$ be 
the set of not procyclic subgroups of $\G_K^a$ 
which lift to abelian subgroups in the canonical 
central extension 
$$
\G_K^c=\G_K/[[\G_K,\G_K],\G_K] \ra \G^a_K. 
$$
The set $\Sigma_K$ is canonically encoded in 
$$
\rR_{\wedge}(\G^c_K)\subset \wedge^2(\G^a_K),
$$ 
a group that carries {\em less} information than $\G^c_K$ (see Section~\ref{sect:wishful}).

\

The main goal of this survey is to explain the background
of the following result,  proved in \cite{bt0} and \cite{bt1}:

\begin{theo}                                    
\label{thm:0}
Let $K$ and $L$ be function fields over
algebraic closures of finite fields $k$ and $l$,  
of characteristic $\neq \ell$. 
Assume that the transcendence degree of $K$ over $k$ is at least two and  
that there exists an isomorphism
\begin{equation}
\label{eqn:psi-dual}
\Psi=\Psi_{K,L}\,:\, \G^a_K\stackrel{\sim}{\lra} \G^a_{L}
\end{equation}
of abelian pro-$\ell$-groups inducing a bijection of sets
$$
\Sigma_K = \Sigma_{L}.
$$
Then $k=l$ and there exists a constant $\epsilon\in \Z_{\ell}^{\times}$
such that $\epsilon^{-1}\cdot\Psi$ is induced from 
a unique isomorphism of perfect closures
$$
\bar{\Psi}^*\,:\, \bar{L}\stackrel{\sim}{\lra} \bar{K}.
$$
\end{theo}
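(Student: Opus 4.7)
The plan is to reconstruct the field $K$ from the data $(\G_K^a, \Sigma_K)$ in three successive stages: first recover the divisorial valuation structure of $K/k$, then identify the distinguished collection of $k$-projective lines inside the multiplicative group $K^\times/k^\times$, and finally apply the fundamental theorem of projective geometry to upgrade the resulting incidence structure to a field isomorphism. The scaling ambiguity $\epsilon \in \Z_\ell^\times$ is unavoidable and merely reflects the $\Z_\ell^\times$-action on the Tate module of roots of unity. Since $k=\bar{\F}_p$ is algebraically closed with all $\ell^n$-th roots of unity, Kummer theory identifies $\G_K^a$ with the Pontryagin dual of the pro-$\ell$ completion of $K^\times/k^\times$, so $\Psi$ becomes, dually, an isomorphism of (topological) multiplicative groups modulo constants, and the $\Sigma$-data transports into a distinguished set of abelian subgroups on the multiplicative side.

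The first step invokes the theory of commuting pairs from Section~\ref{sect:valuations}: the subgroups in $\Sigma_K$ are exactly the images, in $\G_K^a$, of the decomposition groups (modulo their inertia flags) of non-trivial valuations of $K/k$, and within each such subgroup the inertia is intrinsically characterized by maximal divisibility properties encoded in $\rR_\wedge(\G^c_K)$. Consequently $\Psi$ yields a canonical bijection between valuations of $K/k$ and of $L/l$ that are seen by $\G^c$, and for each matched pair $(v,w)$ it produces a compatible isomorphism of the abelianized pro-$\ell$ Galois groups of the residue fields $K_v$ and $L_w$ preserving their own $\Sigma$-data. This sets up an induction on $\trdeg_k K$ whose base case is $\trdeg_k K = 2$, the smallest case where $\rB_0(\G^c_K)$ has any chance of being nontrivial.

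The decisive step is to detect, purely from $(\G_K^a, \Sigma_K)$, which pairs $\bar f, \bar g \in K^\times/k^\times$ are \emph{projectively dependent over} $k$, i.e.\ satisfy $\alpha f + \beta g \in k^\times$ for some $(\alpha,\beta)\in k^2\setminus 0$; equivalently, which $2$-dimensional $k$-subspaces $k\cdot f + k\cdot g \subset K$ exist. Following the strategy of \cite{bt0}, \cite{bt1}, the criterion is that $(\bar f, \bar g)$ spans such a plane if and only if the subgroup $\langle \bar f, \bar g\rangle$ is compatible with the valuation theory on $k(f/g)$ in a prescribed rigid manner: the full coset $k^\times \cdot (kf + kg)$ is cut out as the set of those multiplicative classes whose residue at every divisorial valuation of $k(f/g)$ satisfies a condition expressible entirely in matched $\Sigma$-data on the residue fields, which have smaller transcendence degree and are handled by the inductive hypothesis. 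Once this characterisation is established, $\Psi$ (after fixing the scalar $\epsilon$ so that the Kummer pairing is normalized) carries projective lines to projective lines and produces an isomorphism of projective geometries $\P_k(K) \simeq \P_l(L)$ of dimension at least two.

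The conclusion is then formal: the fundamental theorem of projective geometry in dimension $\geq 2$ promotes this incidence-preserving bijection to a semi-linear field isomorphism, and combined with the multiplicative data built in from the outset it descends to an isomorphism $\bar\Psi^*\colon \bar L \xrightarrow{\sim} \bar K$ of perfect closures (the perfect-closure qualifier absorbing the Frobenius ambiguity in characteristic $p$). The constant subfields are intrinsically the intersections of the kernels of all valuations, hence preserved, forcing $k = l$. The main obstacle is the projective-line detection in the third paragraph: it requires delicate rigidity statements about commuting pairs in residue fields, and is precisely the place where the hypotheses $\trdeg_k K \geq 2$ and $k = \bar{\F}_p$ are indispensable --- the latter because only over $\bar{\F}_p$ is $k^\times$ uniquely $\ell$-divisible, so that no residual multiplicative structure contaminates the Kummer-theoretic identification of the field with its projectivization.
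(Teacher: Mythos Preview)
Your outline has the right skeleton --- Kummer duality, commuting pairs for valuations, projective-line detection, then the fundamental theorem of projective geometry --- and this is indeed the architecture of \cite{bt0}, \cite{bt1} as surveyed here. But you have elided precisely what the paper calls ``the main difficulty.'' Dualizing $\Psi$ via Kummer theory does \emph{not} give an isomorphism $K^\times/k^\times \simeq L^\times/l^\times$; it gives only an isomorphism of pro-$\ell$ completions $\Psi^*\colon \hat{L}^\times \xrightarrow{\sim} \hat{K}^\times$. Your ``decisive step'' speaks of detecting which pairs $\bar f,\bar g\in K^\times/k^\times$ are projectively dependent, but a priori you have no access to the lattice $K^\times/k^\times\subset\hat{K}^\times$: a generic element of $\hat K^\times$ is a pro-$\ell$ limit, not a function. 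Recovering this lattice (up to $\Z_{(\ell)}^\times$) is the core of the proof and occupies most of \cite{bt0}. The actual route is not your proposed criterion on $k(f/g)$: one first shows that for every one-dimensional $F\subset L$ the image $\Psi^*(F^\times/l^\times)$ lands in $\hat E^\times$ for some one-dimensional $E\subset K$ --- this is where the geometry of divisors on surfaces in Section~\ref{sect:pro-ell} enters --- and only then can one pin down the lattice and match projective lines, via the functional-equation argument of Proposition~\ref{prop:funct}.

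A few smaller inaccuracies. Your description of $\Sigma_K$ as ``exactly the images of decomposition groups'' inverts definition and theorem: $\Sigma_K$ is \emph{defined} as the set of non-procyclic liftable subgroups of $\G_K^a$, and the link to valuations is the content of Theorem~\ref{thm:sigma} and Corollary~\ref{coro:ss}. Your stated reason for requiring $k=\bar\F_p$ (unique $\ell$-divisibility of $k^\times$) holds over any algebraically closed field of characteristic $\neq\ell$; the real reason, visible in Section~\ref{sect:pro-ell}, is that the lattice-recovery step exploits features specific to models over $\bar\F_p$ (torsion $\Pic^0$, finiteness arguments involving rational versus nonrational curves). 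Finally, the reduction from higher transcendence degree is not via residue fields of divisorial valuations --- that would drop you to transcendence degree one, where the whole method collapses --- but via restriction to two-dimensional subfields $l(x,y)\subset L$, as explained at the end of Section~\ref{sect:wishful}.
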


The intuition behind Theorem~\ref{thm:0} is that the arithmetic and geometry of 
varieties of transcendence degree $\ge 2$ over algebraically closed ground fields 
is governed by {\em abelian} or {\em almost abelian} phenomena. One of the consequences
is that central extensions of abelian groups provide {\em universal} counterexamples
to Noether's problem, and more generally, provide {\em all} finite cohomological obstructions
to rationality, at least over $\bar{\F}_p$ (see Section~\ref{sect:bloch-kato}).

\

Conceptually, the proof of Theorem~\ref{thm:0} explores a {\em skew-symmetric} incarnation of the field, 
which is a {\em symmetric object}, with two symmetric operations.  
Indeed, by Kummer theory, we can identify
$$
\G^a_K = \Hom(K^{\times}/k^{\times},\Z_{\ell}). 
$$
Dualizing again, we obtain
$$
\Hom(\G^a_K, \Z_{\ell}) = \hat{K}^{\times},
$$
the pro-$\ell$-completion of the multiplicative group of $K$.
Recall that 
$$
K^{\times}=\rK_1^M(K),
$$
the first Milnor $\rK$-group of the field. 
The elements of $\wedge^2(\G^a_K)$ are
matched with symbols in Milnor's $\rK$-group $\rK_2^M(K)$. 
The symbol 
$(f,g)$ is infinitely divisible 
in $\rK_2^M(K)$ if and only if $f,g$ are algebraically dependent, i.e., 
$f,g\in E=k(C)$ for some curve $C$ (in particular, we get no information when $\trdeg_k(K)=1$).
In Section~\ref{sect:k-theory} we describe how to reconstruct homomorphisms of fields
from compatible homomorphisms 

\centerline{
\xymatrix{
\rK_1^M(L)   \ar[r]^{\!\!\psi_1} & \rK_1^M(K), \\
\rK_2^M(L)   \ar[r]^{\!\!\psi_2} & \rK_2^M(K).
}
}

\

\noindent
Indeed, the multiplicative group of the ground field $k$ is characterized as the subgroup of 
infinitely divisible elements of $K^{\times}$, 
thus 
$$
\psi_1\colon \P(L)=L^{\times}/l^{\times}\ra \P(K)=K^{\times}/k^{\times},
$$ 
a homomorphism of multiplicative groups (which we assume to be injective).  
The compatibility with $\psi_2$ means that infinitely divisible 
symbols are mapped to infinitely divisible symbols, i.e.,   
$\psi_1$ maps multiplicative groups $F^{\times}$ of 1-dimensional subfields $F\subset L$ to 
$E^{\times}\subset K^{\times}$, for 1-dimensional $E\subset K$. This implies that 
already each $\P^1\subset \P(L)$ is mapped to a $\P^1\subset \P(K)$. 
The Fundamental theorem of projective geometry (see Theorem~\ref{thm:recon-fields})  
shows that (some rational power of) $\psi_1$ is a restriction of a 
homomorphisms of fields $L\ra K$.

Theorem~\ref{thm:0} is a pro-$\ell$-version of this result.
Kummer theory provides the isomorphism
$$
\Psi^* \colon \hat{L}^{\times}\ra \hat{K}^{\times}
$$
The main difficulty is to recover
the {\em lattice}
$$
K^{\times}/k^{\times}\otimes \Z_{(\ell)}^{\times} \subset \hat{K}^{\times}.
$$
This is done in several stages. 
First, the theory of {\em commuting pairs} (see \cite{BT}) 
allows to reconstruct abelianized 
inertia and decomposition groups of valuations
$$
\I^a_{\nu}\subset \D^a_{\nu}\subset \G^a_K.
$$ 
Note that for divisorial valuations $\nu$ we have $\I^a_{\nu}\simeq \Z_{\ell}$, 
and the set
$$
\I^a=\{ \I^a_{\nu}\} 
$$
resembles a $\Z_{\ell}$-{\em fan} in $\G^a_{K}\simeq \Z_{\ell}^{\infty}$. 
The key issue is to pin down, canonically, a topological generator for each of these
$\I^a_{\nu}$. 
The next step is to show that 
$$
\Psi^*(F^{\times}/l^{\times})\subset \hat{E}^{\times}\subset \hat{K}^{\times}
$$
for some 1-dimensional $E\subset K$. This occupies most of the paper~\cite{bt0}, 
for function fields of surfaces. The higher-dimensional case, treated in \cite{bt1},
proceeds by induction on dimension.
The last step, i.e., matching of projective structures on multiplicative groups, 
is then identical to the arguments used above in the context of Milnor $\rK$-groups.

\

The Bloch--Kato conjecture says that $\G^c_K$ contains all information about 
the cohomology of $G_K$, with finite constant coefficients  
(see Section~\ref{sect:bloch-kato} for a detailed discussion).
Thus we can consider Theorem~\ref{thm:0} as a {\em homotopic} version of the Bloch--Kato 
conjecture, i.e., $\G_K^c$ determines the field $K$ itself, modulo purely-inseparable extension. 

\

{\em Almost abelian} anabelian geometry evolved
from the Galois-theoretic interpretation of Saltman's counterexamples described above  
and the Bloch--Kato conjecture.  These ideas, 
and the ``recognition'' technique used in the proof of Theorem~\ref{thm:0}, 
were put forward in \cite{bog-87}, \cite{bog-stable}, \cite{B-1}, \cite{B-2}, \cite{B-3}, 
and developed in \cite{BT}, \cite{bt0}, \cite{bt-milnor}, and \cite{bt1}. 
In recent years, this approach has attracted the attention of several experts, 
for example, F. Pop, see \cite{pop-unpub}, as well as his webpage, for other preprints on this topic, 
which contain his version of the recognition procedure of $K$ from $\G^c_K$, for the same class of fields $K$. 
Other notable contributions are due to Chebolu, Efrat, and Minac  \cite{efrat-minac}, \cite{chebolu-minac}.

\

Several ingredients of the the proof of Theorem ~\ref{thm:0} sketched above
appeared already in Grothendieck's anabelian geometry, relating the {\em full}  
absolute Galois group of function fields to the geometry of projective models. 
Specifically, even before Grothendieck's insight, it was understood by Uchida and Neukirch 
(in the context of number fields and function fields of curves over finite fields) 
that the identification of decomposition groups of 
valuations can be obtained in purely group-theoretic terms as, roughly speaking, 
subgroups with nontrivial center. 
Similarly, it was clear that Kummer theory essentially captures the multiplicative structure 
of the field and that the projective structure on $\P_k(K)$ encodes the additive structure. 
The main difference between our approach and the techniques of, e.g., Mochizuki \cite{mochizuki} 
and Pop \cite{pop-unpub} is the theory of commuting pairs which is based on an unexpected
coincidence: the {\em minimal} necessary condition for the commutation of two elements of the 
absolute Galois group of a function field $K$ 
is also sufficient and already implies that these elements belong 
to the same decomposition group. It suffices to check this condition on $\G^c_K$, which 
{\em linearizes} the commutation relation. 
Another important ingredient in our approach is the correspondence between {\em large} free quotients of
$\G^c_K$ and integrally closed 1-dimensional subfields of $K$. Unfortunately, in full generality, this
conjectural equivalence remains open (see the discussion in Section~\ref{sect:wishful}).
However, by exploiting geometric properties of projective models of $K$ we succeed
in proving it in many important cases, which suffices for solving the recognition problem and
for several other applications.  

\

Finally, in Section~\ref{sect:curves} we discuss almost abelian phenomena in 
Galois groups of {\em curves} which occur for competely different reasons. 
An application of a recent result of Corvaja--Zannier concerning the divisibility of
values of recurrence sequences leads to a Galois-theoretic 
Torelli-type result for curves over finite fields. 

\

\no
{\bf Acknowledgments.} 
We have benefited from conversations with J.-L. Colliot-Th\'el\`ene,  B. Hassett, and M. Rovinsky.
We are grateful to the referee for helpful remarks and suggestions. 
The first author was partially supported  by NSF grant DMS-0701578. 
The second author was partially supported by NSF grants DMS-0739380 and 
0901777.

\section{Abstract projective geometry}
\label{sect:projective}

\begin{defn}
\label{defi:proj}
A \emph{projective structure} is a pair $(S,\mL)$ where 
$S$ is a set (of points) and $\mL$ a collection of subsets 
${\mathfrak l}\subset S$ (lines) such that
\begin{itemize}
\item[P1] there exist an $s\in S$ and an $\mathfrak l\in \mL$
such that $s\notin \mathfrak l$;
\item[P2] for every $\mathfrak l\in \mL$ there exist at least
three distinct $s,s',s''\in \mathfrak l$;
\item[P3] for every pair of distinct $s,s'\in S$ there exists exactly 
one 
$$
\mathfrak l={\mathfrak l}(s,s')\in \mL
$$ 
such that $s,s'\in \mathfrak l$;
\item[P4] for every quadruple of pairwise distinct
$s,s',t,t'\in S$ one has
$$
{\mathfrak l}(s,s')\cap {\mathfrak l}(t,t')\neq \emptyset\,\, \Rightarrow\,\, 
{\mathfrak l}(s,t)\cap {\mathfrak l}(s',t')\neq \emptyset  . 
$$ 
\end{itemize}
\end{defn}

In this context, one can define (inductively) the dimension of a projective space:
a two-dimensional projective space, i.e., 
a projective plane, is the set of points on lines passing 
through a line and a point outside this line; a three-dimensional space is
the set of points on lines passing through a plane and a point outside this plane, etc.  
 
A \emph{morphism} of projective structures 
$\rho\,:\, (S,\mL)\ra (S',\mL')$ 
is a map of sets $\rho\,:\, S\rightarrow S'$ preserving lines, i.e., 
$\rho(\ml)\in \mL'$, for all $\ml\in \mL$.

\label{defn:pappus}
A projective structure $(S,\mL)$ satisfies {\em Pappus' axiom}
if 
\begin{itemize}
\item[PA]
for all 2-dimensional subspaces and every configuration
of six points and lines in these subspaces as below

\

\centerline{\includegraphics[width=.4\textwidth]{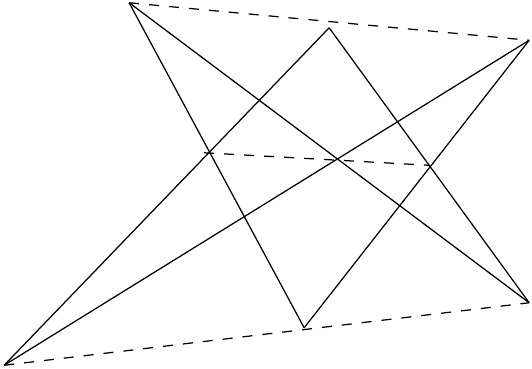}}

\

\noindent
the intersections are collinear. 
\end{itemize}

\

The following Fundamental theorem of abstract projective geometry
goes back at least to Schur and Hessenberg, but there were many  
researchers before and after exploring the various interconnections
between different sets of axioms 
(Poncelet, Steiner, von Staudt, Klein, Pasch, Pieri, Hilbert, and others).\footnote{{\it But there is one group of deductions 
which cannot be ignored in any consideration of the 
principles of Projective Geometry. 
I refer to the theorems, by which it is proved that numerical coordinates, 
with the usual properties, can be defined without the introduction of distance as a fundamental idea. The establishment of this result is one of the triumphs of modern mathematical thought.} A.N. Whitehead, 
``The axioms of projective geometry'', p. v, 1906.}

\begin{theo}[Reconstruction]
Let $(S,\mL)$ be a projective structure of dimension $n\ge 2$ 
which satisfies Pappus' axiom. 
Then there exists a vector space $V$ over a field $k$ and 
an isomorphism 
$$
\sigma\,:\, \P_k(V)\stackrel{\sim}{\longrightarrow} S.
$$
Moreover, for any two such triples $(V,k,\sigma)$ and $(V',k', \sigma')$
there is an isomorphism 
$$
V/k\stackrel{\sim}{\longrightarrow}V'/k'
$$
compatible with $\sigma,\sigma'$ and unique up to 
homothety $v\mapsto \lambda v$, $\lambda\in k^{\times}$.
\end{theo}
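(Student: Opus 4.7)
The plan is to follow the classical Hilbert / von Staudt reconstruction route: synthesize a field $k$ out of a single line together with the incidence combinatorics of $S$, and then use a projective frame to manufacture the vector space $V$.

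First, I would extract more geometry from the axioms before building anything algebraic. Axioms P1--P4 put us in a Veblen--Young abstract projective space of the prescribed dimension, and in dimension $\ge 3$ they already force Desargues' theorem. In dimension $2$ Desargues need not follow from P1--P4, but we have Pappus, and Hessenberg's theorem shows that Pappus implies Desargues in any projective plane. Thus in all dimensions $\ge 2$ the space $(S,\mL)$ is Desarguesian and Pappian.

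Next I would coordinatize a line. Fix a line $\ml_0 \in \mL$ and three distinct points $O,I,\infty \in \ml_0$; set $k := \ml_0 \setminus \{\infty\}$. Using the surrounding projective plane, define $+$ and $\cdot$ on $k$ by the standard von Staudt constructions: addition by choosing an auxiliary line through $\infty$ and forming the parallelogram-type figure that sends $(a,b)$ to $a+b$, multiplication by analogously anchoring at $O$ and $\infty$ and using cross-ratio-type projectivities normalized by $I$. Desargues' theorem is exactly what one needs to show these operations are independent of the auxiliary choices and that $+$ and $\cdot$ are associative and distributive; Pappus' axiom is exactly what upgrades the resulting skew field to a commutative field. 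This gives $k$ as a well-defined field, and any line of $S$ acquires a canonical $k$-affine structure after removing one point (with any two such coordinatizations of the same line related by a $k$-affine transformation, again by Desargues).

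Now I would build $V$. Choose a projective frame, i.e. $n+2$ points of $S$ in general position (where $n = \dim S$). Using iterated application of axiom P4 (Veblen's axiom) together with Desargues, assign to every point $s\in S$ homogeneous coordinates in $k^{n+1}$ with respect to this frame, and verify that lines of $\mL$ correspond exactly to projective lines of $\mathbb{P}_k(k^{n+1})$. Setting $V := k^{n+1}$ produces the desired bijection $\sigma\colon \mathbb{P}_k(V) \xrightarrow{\sim} S$.

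For uniqueness, given another triple $(V',k',\sigma')$, the composition $\sigma'^{-1}\circ\sigma$ is an isomorphism of projective structures between $\mathbb{P}_k(V)$ and $\mathbb{P}_{k'}(V')$. Transporting the frame, it must send the von Staudt construction on $\ml_0$ in one model to the corresponding construction in the other; this forces an isomorphism of fields $k\xrightarrow{\sim}k'$ sending $0\mapsto 0$, $1\mapsto 1$, and the frame-based coordinates then assemble into a compatible $k$-linear isomorphism $V/k \xrightarrow{\sim} V'/k'$. The only remaining ambiguity is the normalization of the last frame vector, which is precisely a homothety $v\mapsto \lambda v$ with $\lambda\in k^\times$. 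The main obstacle is the algebraic step: checking that the von Staudt $+$ and $\cdot$ are well-defined and satisfy all the field axioms (in particular commutativity of multiplication from Pappus, and distributivity from Desargues) is where the real work lies; once that is in place, the passage to $V$ and the uniqueness statement are essentially bookkeeping on frames.
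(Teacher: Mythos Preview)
Your outline is the standard von Staudt--Hessenberg--Hilbert coordinatization argument and is essentially correct as a sketch; the paper, however, does not supply a proof at all. It states the Reconstruction theorem as a classical result (crediting Schur, Hessenberg, and the tradition from Poncelet through Hilbert) and moves on immediately to examples, so there is nothing to compare your argument against. If anything, your write-up is more detailed than what the paper offers, and the route you describe---Hessenberg to get Desargues in the planar case, von Staudt addition and multiplication on a line with three marked points, then a projective frame to build $V\simeq k^{n+1}$, and finally the fundamental theorem of projective geometry for uniqueness up to homothety---is exactly the classical one the paper is implicitly invoking.
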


Main examples are of course the sets of $k$-rational points of 
the usual projective $\P^n$ 
space over $k$ of dimension $n\ge 2$. 
Then $\P^n(k)$ carries a projective structure: lines
are the usual projective lines $\P^1(k)\subset \P^n(k)$.

A related example arises as follows:
Let $K/k$ be an extension of fields. Then
$$
S:=\P_k(K)=(K\setminus 0)/k^{\times}
$$ 
carries a natural (possibly, infinite-dimensional) 
projective structure. Moreover, the
\emph{multiplication} in $K^{\times}/k^{\times}$ preserves this structure.  
In this setup we have the following reconstruction theorem (\cite[Theorem 3.6]{bt0}):

\begin{theo}[Reconstructing fields] \label{thm:recon-fields}
Let $K/k$ and $K'/k'$ be field extensions of degree $\ge 3$ and 
$$
\bar{\psi}\,:\, S=\P_k(K)\ra \P_{k'}(K')=S'
$$ 
an injective homomorphism of abelian groups compatible with 
projective structures. 
Then $k\simeq k'$ and $K$ is isomorphic to a subfield of $K'$.  
\end{theo}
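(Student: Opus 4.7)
The plan is to promote $\bar\psi$ to an honest injective ring homomorphism $\phi\colon K\hookrightarrow K'$ whose restriction to $k$ is a field isomorphism $\iota\colon k\stackrel{\sim}{\to}k'$. Since $[K:k]\ge 3$, the projective space $\P_k(K)$ has dimension at least two and satisfies Pappus' axiom, so I would invoke the Reconstruction theorem, in its semilinear form for morphisms of projective structures, to obtain an injective $\iota$-semilinear additive map $\phi\colon K\to K'$ inducing $\bar\psi$, for some field embedding $\iota\colon k\hookrightarrow k'$ uniquely determined by $\bar\psi$; the lift $\phi$ itself is unique up to multiplication by an element of $k'^\times$. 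I would normalize $\phi$ so that $\phi(1)=1$, making $\phi|_k=\iota$.

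Next I would show that $\iota$ is surjective. By Definition~\ref{defi:proj}, a morphism of projective structures sends lines \emph{onto} lines. Applied to the $k$-line through $\bar 1$ and $\bar x$, for any $x\in K\setminus k$, the image is the set of classes $\overline{\iota(\alpha)+\iota(\beta)\phi(x)}$ parametrized by $(\alpha:\beta)\in\P^1(k)$, whereas the unique $k'$-line through $\bar 1$ and $\overline{\phi(x)}$ is parametrized by all of $\P^1(k')$. Equality of these two subsets of $\P_{k'}(K')$ forces the map $\P^1(k)\to\P^1(k')$ induced by $\iota$ to be surjective, which for a field embedding means $\iota(k)=k'$. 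Hence $k\simeq k'$.

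Finally I would use the multiplicativity of $\bar\psi$ (as a group homomorphism) to upgrade $\phi$ from additive to multiplicative. Define the symmetric defect $h\colon K^\times\times K^\times\to k'^\times$ by $h(x,y)=\phi(xy)\phi(x)^{-1}\phi(y)^{-1}$; the normalization gives $h(1,-)=1$. Combining additivity of $\phi$ with the identity $x(y+z)=xy+xz$ yields
$$
h(x,y)\phi(y)+h(x,z)\phi(z)=h(x,y+z)\bigl(\phi(y)+\phi(z)\bigr).
$$
Whenever $\bar y\neq \bar z$ in $\P_k(K)$, injectivity of $\bar\psi$ ensures that $\phi(y)$ and $\phi(z)$ are $k'$-linearly independent, so matching coefficients forces $h(x,y)=h(x,y+z)=h(x,z)$. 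Using axiom P2 to pick, for any two $y_1,y_2\in K^\times$, an auxiliary $z$ whose class differs from both $\bar y_1$ and $\bar y_2$, I conclude that $h(x,-)$ is constant; symmetry in the two arguments then forces $h$ to be a global constant equal to $h(1,1)=1$. Thus $\phi$ is a ring homomorphism, embedding $K$ as a subfield of $K'$.

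The main obstacle is this last step, where the cocycle $h$ entangles the additive and multiplicative structure of $K$; the key ingredient that untangles it is the $k'$-linear independence of $\phi(y)$ and $\phi(z)$ for distinct classes $\bar y\neq\bar z$ in $\P_k(K)$, a consequence of injectivity of $\bar\psi$ together with the line-onto-line property established in the second step.
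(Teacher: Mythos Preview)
The paper does not supply a proof of this statement; it merely cites \cite[Theorem~3.6]{bt0}. Your argument---first producing a semilinear lift $\phi$ via the Fundamental Theorem of Projective Geometry, then using the line-onto-line property of morphisms to force $\iota$ surjective, and finally collapsing the multiplicative defect $h$ by an additive cocycle computation---is correct and is exactly the route suggested by the paper's setup in Section~\ref{sect:projective}.

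Two small points worth tightening. First, the Reconstruction theorem as stated in the paper coordinatizes a \emph{single} abstract projective structure; what you actually invoke is the stronger \emph{morphism} version (a line-preserving injection between already-coordinatized projective spaces comes from a semilinear map), which is classical but not stated here. For that version to apply you should observe that the image of $\bar\psi$ is not contained in a single line: since a morphism sends each line \emph{onto} a line, two distinct lines through a common point must have distinct images (else surjectivity onto the common image line contradicts injectivity of $\bar\psi$), so the image spans at least a plane. Second, your closing sentence slightly overstates matters: the $k'$-linear independence of $\phi(y)$ and $\phi(z)$ for $\bar y\neq\bar z$ follows from injectivity of $\bar\psi$ alone, independently of the line-onto-line property used in the second step.
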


The following strengthening is due to M. Rovinsky. 

\begin{theo}
Let $S$ be an abelian group equipped with a compatible structure of a projective space. 
Then there exist fields $k$ and $K$ such that $S=\P_k(K)$. 
\end{theo}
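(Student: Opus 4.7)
The plan is to realize $(S,\mL)$ as $\P_k(V)$ via the Fundamental Theorem of Projective Geometry, and then use the compatible abelian group law on $S$ to equip $V$ with a field multiplication, so that $K:=V$ is the desired field. Assuming the projective dimension of $(S,\mL)$ is at least two (the one-dimensional case reduces to $\P^1$ and can be handled separately), I would first verify Pappus' axiom as a consequence of the commutativity of the group law together with the compatibility hypothesis, mirroring the classical correspondence between Pappus' axiom and commutativity of the underlying coordinate ring. Applying the Fundamental Theorem then yields $(V,k,\sigma)$ with $\sigma:\P_k(V)\to S$, and each left multiplication $L_s:S\to S$ is a collineation that lifts to a semilinear map $\tilde L_s:V\to V$, unique up to a scalar in $k^\times$, with an associated field automorphism $\sigma_s\in\Aut(k)$; the assignment $s\mapsto\sigma_s$ is then a group homomorphism.

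The main technical obstacle is to show that $\sigma_s=\mathrm{id}$ for every $s\in S$, equivalently that $L_s$ preserves cross-ratios and not merely incidence. My strategy is to exploit the abelian group structure: the relations $st=ts$, together with collinearities forced by the group law (for instance among $e,s,s^{-1}$ on the lines they span, or among points on a line through $e$ that form a subgroup of $S$), impose projectively invariant constraints on $\tilde L_s$ which go beyond line-preservation. Harmonic configurations in particular are purely incidence-theoretic, and pulling in enough such configurations from the group law should force $\sigma_s$ to fix every element of $k$, so that each $\tilde L_s$ is genuinely $k$-linear. I expect this to be the heart of the argument, since without it no field structure on $V$ compatible with multiplication on $S$ can exist.

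Granted $k$-linearity, fix a lift $v_e\in V$ of the identity $e\in S$ and, for each $s\in S$, choose a lift $v_s\in V$ and take the unique $\tilde L_s$ with $\tilde L_s(v_e)=v_s$. Define a multiplication on $V$ by $v\cdot w:=\tilde L_{[v]}(w)$ where $\tilde L_{[v]}$ is normalized so that $\tilde L_{[v]}(v_e)=v$. This product is $k$-linear in $w$ by construction and $k$-homogeneous in $v$ by the normalization; additivity in $v$ follows from an incidence computation on the line $\ml(v,v')\subset \P_k(V)$, using the linear (not merely projective) refinement of compatibility obtained above. Commutativity of the multiplication comes from $S$ being abelian, associativity from $L_sL_t=L_{st}$, and the existence of multiplicative inverses from $s^{-1}\in S$. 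Adjoining a zero element yields a field $K:=V$ with $k$ as a subfield and $S=K^\times/k^\times=\P_k(K)$, as required.
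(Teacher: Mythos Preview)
Your strategy is the same as the paper's: realize $S=\P_k(V)$ via the Fundamental Theorem and use the translations $L_s$ to put a multiplication on $V$. The paper's proof (attributed to Rovinsky) packages the endgame more slickly: rather than defining $v\cdot w:=\tilde L_{[v]}(w)$ and checking the field axioms one by one, it observes that $s\mapsto L_s$ embeds $S=\P(V)$ as a projective subspace of $\PGL(V)\subset\P(\mathrm{Mat}(V))$, so the preimage $W\subset\mathrm{Mat}(V)$ is a linear subspace; closure of $S$ under the group law makes $W$ a subalgebra, every nonzero element lies in $\GL(V)$ hence is invertible, and commutativity of $S$ forces $W$ to be commutative. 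This is exactly your map $v\mapsto\tilde L_{[v]}$, just viewed inside $\mathrm{Mat}(V)$, so associativity, distributivity, and inverses come for free from the matrix algebra.

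Where you are more careful than the paper is on the point you single out as the main obstacle: showing that each collineation $L_s$ lifts to a genuinely $k$-linear map, i.e.\ that the twist $\sigma_s\in\Aut(k)$ is trivial. The paper's four-line proof simply asserts the embedding lands in $\PGL(V)$ and does not address the possibility of semilinear lifts. Your proposed attack via harmonic configurations is reasonable though left vague. Once $k$-linearity is in hand, the linearity of $v\mapsto\tilde L_{[v]}$ (which the paper also leaves implicit) follows from the observation that $s_1,s_2,s_3$ collinear forces $s_1t,s_2t,s_3t$ collinear for every $t$ (here commutativity $R_t=L_t$ is used), so $\tilde L_{s_1}(w),\tilde L_{s_2}(w),\tilde L_{s_3}(w)$ are pointwise dependent, and the normalization at $v_e$ upgrades this to $\tilde L_{s_3}=\alpha\tilde L_{s_1}+\beta\tilde L_{s_2}$ globally. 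One small caution: your claim that commutativity of the product on $V$ is immediate from $S$ abelian is a bit quick, since a priori $L_sL_t=L_tL_s$ only gives $\tilde L_s\tilde L_t=\lambda\tilde L_t\tilde L_s$ for some $\lambda\in k^\times$; one still needs a short argument (e.g.\ comparing the commutator of $a,b$ with that of $1+a,b$ in the resulting division ring) to force $\lambda=1$. The paper is equally terse on this point.
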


\begin{proof}
There is an embedding of $S=\P(V)$ as a projective subspace into
$\PGL(V)$. Its preimage in $\GL(V)$ is a linear subspace minus
a point. Since $V$ is invariant under products (because
$\P(V)$ is) we obtain that $V$ is a commutative subalgebra
of ${\rm Mat}(V)$ and every element is invertible - hence it is a field.
\end{proof}


\

Related {\em reconstruction theorems} of ``large'' fields  have recently emerged in model theory. 
The setup there is as follows:
A \emph{combinatorial pregeometry} (\emph{finitary matroid}) is a 
pair $(\mathcal P,cl)$ where $\mathcal P$ is a set and 
$$
cl \,:\, {\rm Subsets}(\mathcal P) \rightarrow {\rm Subsets}(\mathcal P), 
$$
such that for all $a, b \in \mathcal P$  and all $Y, Z \subseteq \mathcal P$ one has:
\begin{itemize}
\item $Y \subseteq cl(Y)$,
\item if $Y \subseteq Z$, then $cl(Y) \subseteq cl(Z)$,
\item $cl(cl(Y)) = cl(Y)$,
\item if $a \in cl(Y)$, then there is a finite subset $Y'\subset  Y$ such that $a \in cl(Y')$ (finite character), 
\item (exchange condition) 
if $a \in cl( Y \cup \{ b\}) \setminus cl(Y)$, 
then $b \in cl(Y\cup \{ a\})$.
\end{itemize}

A \emph{geometry} is a pregeometry such that $cl({a}) = {a}$, for all $a \in \mathcal P$, and 
$cl(\emptyset) = \emptyset$. Standard examples are provided by:
\begin{enumerate} 
\item $\mathcal P=V/k$, a vector space over a field 
$k$ and $cl(Y)$ the $k$-span of $Y\subset \mathcal P$;
\item $\mathcal P=\mathbb P_k(V)$, the usual \emph{projective} space
over a field $k$;
\item $\mathcal P= \mathcal P_k(K)$, 
a field $K$ containing an 
algebraically closed subfield $k$ and 
$cl(Y)$ - the normal closure of $k(Y)$ in $K$, 
note that a \emph{geometry} is obtained after factoring by $x\sim y$  iff $cl(x)=cl(y)$. 
\end{enumerate}

It turns out that a  sufficiently large field can reconstructed from the {\em geometry} of its 1-dimensional
subfields. 

\begin{theo}[Evans--Hrushovski \cite{eh1},\cite{eh2} / Gismatullin \cite{gism}]
Let $k$ and $k'$ be algebraically closed fields,  
$K/k$ and $K'/k'$ field extensions of transcendence degree $\ge 5$ over $k$, 
resp. $k'$. Then, every 
isomorphism of combinatorial geometries
$$
\mathcal P_{k}(K) \ra \mathcal P_{k'}(K') 
$$
is induced by an isomorphism of purely inseparable closures
$$
\bar{K} \rightarrow \overline{K'}. 
$$
\end{theo}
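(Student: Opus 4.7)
The plan is to reconstruct, from the purely combinatorial data of $\mathcal P_k(K)$, enough additional structure — a family of embedded projective planes $\P^2_k$ together with their lines — so that the Fundamental Theorem of abstract projective geometry and Theorem~\ref{thm:recon-fields} can be applied. First, the exchange property and rank function identify flats of rank $n$ in $\mathcal P_k(K)$ with algebraically closed subfields $F\subset K$ of transcendence degree $n$ over $k$; in particular, points of the geometry correspond to algebraically closed $1$-subfields. The isomorphism $\phi\colon \mathcal P_k(K)\to \mathcal P_{k'}(K')$ thus induces a rank- and inclusion-preserving bijection on the lattices of algebraically closed subfields.

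The crux is to single out, inside a generic rank-$2$ flat $\Pi$ (corresponding to $L\subset K$ with $\trdeg_k L = 2$), a canonical copy of $\P^2_k$ and, more importantly, a purely incidence-theoretic characterization of its projective lines. Three generic points $p_1,p_2,p_3\in \Pi$ should be declared collinear exactly when a suitable family of auxiliary rank-$3$, $4$, and $5$ flats outside $\Pi$ containing the $p_i$ fits a Desargues/Pappus configuration inside an ambient rank-$5$ flat. The hypothesis $\trdeg_k K\ge 5$ is essential here: it guarantees both that enough independent generic flats exist outside $\Pi$ to realize such configurations, and that in the converse direction spurious collinearities can be ruled out by a genericity argument.

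Once projective lines are recovered in every rank-$2$ flat, one obtains an abstract $2$-dimensional projective structure in the sense of Definition~\ref{defi:proj}, satisfying Pappus' axiom (commutativity of $k$) and manifestly preserved by $\phi$. The Fundamental Theorem of abstract projective geometry then supplies the field $k$, an identification $k\simeq k'$, and a compatible $k$-linear structure on the associated vector space. Combining this additive data with the multiplicative structure on $1$-flats — visible via incidences of the form $F\cdot F'\subset \Pi$ — puts us in the setting of Theorem~\ref{thm:recon-fields}, producing an injective field homomorphism; applying the same argument to $\phi^{-1}$ upgrades it to an isomorphism. Because the combinatorial geometry is blind to Frobenius in positive characteristic (it cannot separate $x$ from $x^{1/p}$, which generate the same algebraically closed $1$-subfield), the resulting isomorphism is canonical only between the perfect closures $\bar K\to\overline{K'}$, which is exactly the stated conclusion.

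The technical heart of the argument, and the only genuinely hard step, is the one sketched in the second paragraph: producing a purely incidence-theoretic, first-order definition of projective collinearity that provably picks out an embedded $\P^2_k$ inside a rank-$2$ flat. This is the model-theoretic content of Evans–Hrushovski / Gismatullin, and it is precisely the place where $\trdeg_k K\ge 5$ enters essentially; weakening the bound is delicate because both the realization of Desarguesian configurations and the genericity arguments needed for the converse direction consume independent transcendentals.
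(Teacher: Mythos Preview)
The paper does not prove this theorem: it is quoted with attributions to Evans--Hrushovski and Gismatullin and the exposition immediately moves on to the next section. There is therefore no in-paper argument to compare your proposal against.

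As a sketch of what the cited proofs actually do, your outline is in the right spirit---one recovers a field from incidence configurations inside the pregeometry, and the transcendence-degree bound enters exactly to supply enough independent elements for those configurations---but your attempt to route the endgame through Theorem~\ref{thm:recon-fields} does not work as stated. That theorem takes as input the abelian group $\P_k(K)=K^\times/k^\times$ together with its projective-line structure. The combinatorial geometry $\mathcal P_k(K)$ in the present statement is a strictly coarser object: its points are equivalence classes $\{x : cl(x)=cl(x_0)\}$, i.e., essentially the algebraically closed one-dimensional subfields of $K$, and the only data is the closure operator. Your phrase ``the multiplicative structure on $1$-flats --- visible via incidences of the form $F\cdot F'\subset \Pi$'' is precisely the step that fails: there is no product of points in $\mathcal P_k(K)$ definable from the matroid structure alone, so you cannot feed the output into Theorem~\ref{thm:recon-fields}. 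In the Evans--Hrushovski argument the field is instead \emph{interpreted} directly via the group-configuration theorem from geometric stability theory; that machinery simultaneously produces both the additive and multiplicative structures from pure incidence, without passing through an intermediate multiplicative group. Your second paragraph correctly flags this as the technical heart, but the subsequent paragraphs presuppose more structure than the hard step actually delivers.
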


In the next section, we show how to reconstruct a field  of transcendence degree $\ge 2$ 
from its {\em projectivized} multiplicative group and 
the ``geometry'' of multiplicative groups of 1-dimensional subfields.

\

\section{K-theory}
\label{sect:k-theory}

Let $\rK^{M}_i(K)$ be $i$-th
Milnor K-group of a field $K$. Recall that
$$
\rK^M_1(K)=K^{\times}
$$ 
and that there is a canonical surjective homomorphism
$$
\sigma_K\,:\, \rK^M_1(K)\otimes \rK^M_1(K)\ra \rK_2^M(K);
$$
we write $(x,y)$ for the image of $x\otimes y$.
The kernel of $\sigma_K$ 
is generated by symbols $x\otimes (1-x)$, 
for $x\in K^{\times}\setminus 1$. 
Put 
$$
\bar{\rK}^M_i(K):= \rK^M_i(K)/{\rm infinitely  \,\, divisible\,\, elements}, \quad i=1,2.
$$

\begin{theo}\cite{bt-milnor}
Let $K$ and $L$ be function fields of transcendence degree $\ge 2$ 
over an algebraically closed field $k$, resp. $l$.  
Let
\begin{equation*}
\label{eqn:psi*}
\bar{\psi}_1\,:\, \bar{\rK}^M_1(K)\ra \bar{\rK}^M_1(L)
\end{equation*}
be an injective homomorphism. 

Assume that there is a commutative diagram 

\

\centerline{
\xymatrix{
{\bar{\rK}^M_1(K)}\otimes {\bar{\rK}^M_1(K)} \ar[rr]^{\bar{\psi}_1\otimes \bar{\psi}_1} 
\ar[d]_{\sigma_K}  && 
\ar[d]^{\sigma_L} \bar{\rK}^M_1(L)\otimes \bar{\rK}^M_1(L) \\
    \bar{\rK}^M_2(K)              \ar[rr]_{\bar{\psi}_2} && \bar{\rK}^M_2(L). 
}
}

\noindent
Assume that $\bar{\psi}_1(K^{\times}/k^{\times}) \not\subseteq E^{\times}/l^{\times}$, for
a 1-dimensional field $E\subset L$ (i.e., a field 
of transcendence degree 1 over $l$).  

Then there exist an $m\in \Q$  and a homomorphism of fields
$$
\psi\,:\, K\ra L
$$
such that the induced map on $K^{\times}/k^{\times}$ coincides with 
$\bar{\psi}_1^m$.
\end{theo}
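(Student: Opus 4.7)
My plan is to reduce the statement to the Fundamental Theorem of projective geometry (Theorem~\ref{thm:recon-fields}) by upgrading the multiplicative map $\bar{\psi}_1$ to a morphism of projective structures on $\mathbb{P}_k(K) = K^\times/k^\times$. The key leverage comes from the fact that the commutativity of the given square converts a purely multiplicative hypothesis into a strong geometric one, via the characterization of algebraic dependence by infinite divisibility of symbols.

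First, since $k$ is algebraically closed, $k^\times$ is exactly the subgroup of infinitely divisible elements of $K^\times$, so $\bar{\rK}^M_1(K) = K^\times/k^\times = \mathbb{P}_k(K)$, and similarly for $L$. Using the fact recalled in the introduction — that a symbol $(f,g) \in \rK^M_2(K)$ is infinitely divisible iff $f,g$ are algebraically dependent over $k$ — commutativity of the square translates into the geometric statement that $\bar{\psi}_1$ preserves algebraic dependence of pairs. Consequently, for every 1-dimensional subfield $E \subset K$ the image $\bar{\psi}_1(E^\times/k^\times)$ consists of pairwise algebraically dependent elements and so lies in $F^\times/l^\times$ for some 1-dimensional subfield $F \subset L$ (take $F$ to be the algebraic closure in $L$ of the subfield generated by the image). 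The non-degeneracy hypothesis ensures that as $E$ varies, the corresponding $F$ cannot all coincide.

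Next I would analyze lines. A line $\mathfrak{l} \subset \mathbb{P}_k(K)$ is the projectivization of a 2-dimensional subspace $V = k \cdot x + k \cdot y$; multiplying by $x^{-1}$ (an automorphism of the projective structure preserved by $\bar{\psi}_1$ up to a multiplicative translation) we can normalize $V = k + k \cdot t$ with $t = y/x$, so $\mathfrak{l} \subset \mathbb{P}_k(k(t))$. By the previous step, $\bar{\psi}_1(\mathfrak{l})$ lies in $\mathbb{P}_l(F)$ for a 1-dimensional $F \subset L$. The main obstacle — and the heart of the argument in~\cite{bt-milnor} — is to upgrade this containment to the statement that $\bar{\psi}_1(\mathfrak{l})$ is itself a line in $\mathbb{P}_l(L)$. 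For this I would exploit axioms P3 and P4 of the projective structure together with the multiplicativity of $\bar{\psi}_1$: as $\mathfrak{l}'$ varies in a pencil of lines through a fixed base point, the corresponding images must sit inside a compatible family of 1-dimensional subfields, and the incidence pattern forces line-preservation. One extracts the line structure by examining the multiplicative relations inside $k(t)^\times/k^\times$ that cut out a particular $\mathbb{P}^1(k)$-family, and transporting them to $F^\times/l^\times$ via $\bar{\psi}_1$.

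Finally, once $\bar{\psi}_1$ is shown to be a morphism of projective structures compatible with the multiplicative group law, Theorem~\ref{thm:recon-fields} yields $k \simeq l$ and a field homomorphism $\psi : K \to L$ inducing $\bar{\psi}_1$ on $\mathbb{P}_k(K)$. The rational exponent $m \in \mathbb{Q}$ appears because raising all elements of $K^\times/k^\times$ to a uniform rational power is invisible to both the projective structure and the algebraic-dependence relation (and to $\bar{\rK}^M_i$ after quotienting by infinitely divisible elements), so $\bar{\psi}_1$ agrees with the map induced by $\psi$ only after such a correction.
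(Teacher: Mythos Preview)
Your overall strategy matches the paper's: identify $\bar{\rK}^M_1(K)$ with $\P_k(K)$, use the $\rK_2$-compatibility to show that multiplicative groups of 1-dimensional subfields map into multiplicative groups of 1-dimensional subfields, and then reduce to the Fundamental theorem of projective geometry (Theorem~\ref{thm:recon-fields}). You also correctly locate line-preservation as the crux.

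The genuine gap is in your mechanism for that step. Appealing to axioms P3, P4 and unspecified ``multiplicative relations inside $k(t)^\times/k^\times$ that cut out a $\P^1$-family'' does not do the job, and is not how the paper proceeds. The paper characterizes a line (after a multiplicative translation to pass through $1$) as an \emph{intersection} of two multiplicative translates of projectivized 1-dimensional subfields, of the shape
\[
\overline{k(x/y)}^{\times}\cdot y \ \cap\ \overline{k(p/q)}^{\times}\cdot q .
\]
The technical heart of the proof is then Proposition~\ref{prop:funct}, a functional-equation result which classifies exactly when such an intersection is nonempty: it forces $p\in k^{\times}(x^a-c_1)^{1/a}$, $q\in k^{\times}(y^a-c_2)^{1/a}$, and the intersection to be the single coset $k\cdot(x^a-cy^a)^{1/a}$. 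In characteristic zero this is proved by a differential-equations argument in $\Omega_{K/k}$; in general it is harder. Nothing of this sort is recoverable from abstract incidence axioms alone.

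A related misconception concerns the exponent $m$. It is \emph{not} true that $\bar{\psi}_1$ itself sends lines to lines; what Proposition~\ref{prop:funct} shows is that the image of each $\P^1\subset \P_k(E)$ lies on a rational normal curve of some degree $a$, and a further lemma is needed to prove that $a$ is independent of the line. Only after replacing $\bar{\psi}_1$ by $\bar{\psi}_1^{1/a}$ does one obtain a morphism of projective structures to which Theorem~\ref{thm:recon-fields} applies. So $m$ is not an ambiguity to be explained away after the fact; it is produced by the classification in Proposition~\ref{prop:funct} and is precisely why the conclusion is stated for $\bar{\psi}_1^m$ rather than for $\bar{\psi}_1$.
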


\begin{proof}[Sketch of proof]
First we reconstruct the multiplicative group of the ground field as the subgroup of 
infinitely divisible elements:
An element $f\in K^{\times}={\rm K}_1^M(K)$ is infinitely divisible if and only if
$f\in k^{\times}$.  
In particular, 
$$
\bar{\rK}^M_1(K)=K^{\times}/k^{\times}.
$$
Next, we characterize multiplicative groups of 1-dimensional subfields:
Given a nonconstant $f_1\in K^{\times}/k^{\times}$, we have 
$$
\Ker_2(f_1)=E^{\times}/k^{\times},
$$ 
where $E=\overline{k(f_1)}^K$ is the normal closure in $K$ 
of the 1-dimensional field 
generated by $f_1$ and 
$$
\Ker_2(f):=\{\, g\in K^{\times}/k^{\times} = \bar{\rK}^M_1(K)\,\mid \, 
(f,g)= 0 \in \bar{\rK}_2^M(K) \,\}.
$$

At this stage we know the infinite-dimensional projective subspaces $\P(E)\subset \P(K)$. 
To apply Theorem~\ref{thm:recon-fields} we need to show that projective {\em lines} 
$\P^1\subset \P(K)$ are mapped to projective lines in $\P(L)$. 
It turns out that lines can be characterized as intersections of 
(shifted) $\P(E)$, for appropriate
1-dimensional $E\subset K$. 
The following technical result lies at the heart of the proof.

\end{proof}

\begin{prop} \cite[Theorem 22]{bt-milnor}
\label{prop:funct}
Let $k$ be an algebraically closed field,
$K$ be an algebraically closed field extension of $k$,
$x,y\in K$ algebraically independent over $k$, 
$p\in\overline{k(x)}^{\times} \smallsetminus k\cdot x^{{\mathbb Q}}$ and
$q\in\overline{k(y)}^{\times}\smallsetminus k\cdot y^{{\mathbb Q}}$.
Suppose that 
$$
\overline{k(x/y)}^{\times}\cdot y\cap \overline{k(p/q)}^{\times}\cdot q \neq \emptyset.
$$
Then there exist 
\begin{enumerate}
\item  an $a\in \Q$,
\item $c_1,c_2\in k^{\times}$
such that
$$
p\in k^{\times}\cdot(x^a-c_1)^{1/a}, \quad 
q\in k^{\times}\cdot(y^a-c_2)^{1/a}
$$ 
and
$$
\overline{k(x/y)}^{\times}\cdot y\cap\overline{k(p/q)}^{\times}\cdot
q=k\cdot(x^a-cy^a)^{1/a},
$$
where $c=c_1/c_2$.
\end{enumerate}
\end{prop}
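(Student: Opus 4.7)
The plan is to convert the intersection hypothesis into a differential identity and then solve the resulting functional equation for the explicit forms of $p$ and $q$. Pick $h$ in the intersection and write $h = fy = gq$ with $f \in \overline{k(x/y)}^{\times}$ and $g \in \overline{k(p/q)}^{\times}$. Let $E := x\partial_x + y\partial_y$ be the Euler derivation on $k(x,y)$, extended to $K$. Since $E$ annihilates $k(x/y)$, it annihilates $\overline{k(x/y)}$, and from $h = fy$ one reads off $E(h)/h = 1$. The alternative computation from $h = gq$, using $E(v) = (xp'q - pyq')/q^2$ for $v := p/q$, gives $E(h)/h = (g'(v)/g(v)) \cdot E(v) + yq'/q$. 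Equating the two and simplifying produces the master identity
$$
v \cdot \frac{g'(v)}{g(v)} \;=\; \frac{1 - w}{u - w}, \qquad u := \frac{xp'(x)}{p(x)}, \quad w := \frac{yq'(y)}{q(y)}.
$$

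The left-hand side is an algebraic function of $v$ only, while the right-hand side depends on $(x,y)$ only through $u(x)$, $w(y)$, and $v(x,y)$. Writing $\phi(v) := vg'(v)/g(v)$ and differentiating the master identity in $y$ with $x$ fixed, one obtains after a short rearrangement the separated form $v\phi'(v)/[\phi(\phi-1)] = yw'(y)/[w(w-1)]$. Varying $x$ rescales $v$ by the factor $p(x)/p(x_0)$ without changing the right-hand side, which forces both sides to equal a common constant $a \in \mathbb{Q}$. Integration of the right-hand ODE yields $w(y) = 1/(1 - \beta y^a)$, and $yq'/q = w$ then integrates to $q \in k^{\times} \cdot (y^a - c_2)^{1/a}$, with $c_2 := 1/\beta$ after adjusting the sign of $a$. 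Substituting back into the master identity makes the $y$-dependence of $u$ collapse, producing $u = 1 + (\text{const}) \cdot p^a$; the identical integration then gives $p \in k^{\times} \cdot (x^a - c_1)^{1/a}$ with the \emph{same} exponent $a$. The hypotheses $p \notin k \cdot x^{\mathbb{Q}}$ and $q \notin k \cdot y^{\mathbb{Q}}$ are precisely what eliminate the degenerate branches ($\phi$, $w$, or $u$ constant, $a = 0$, or $\beta = 0$) in which $p$ or $q$ would collapse to a monomial.

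Finally, with these normal forms in hand, a direct substitution shows that $h_0 := (x^a - cy^a)^{1/a}$, $c := c_1/c_2$, lies in both cosets: $h_0 = y \cdot ((x/y)^a - c)^{1/a} \in \overline{k(x/y)}^{\times} \cdot y$, and $h_0$ is a $k^{\times}$-multiple of $pq \cdot (q^a - cp^a)^{-1/a} \in \overline{k(p/q)}^{\times} \cdot q$. Because $\overline{k(x/y)}$ and $\overline{k(p/q)}$ are distinct one-dimensional subfields of $K$ with intersection $k$, the intersection of the two cosets is a single $k^{\times}$-orbit, which must therefore equal $k \cdot h_0$. The chief obstacle I expect is making the differential manipulations rigorous when $f$ and $g$ are algebraic, not rational, over their respective base fields (and in positive characteristic, where derivations may fail to extend uniquely to $\overline{k(x/y)}$); the cleanest workaround is to raise $h$ to a sufficiently high power $NM$ so that $f^N \in k(x/y)$ and $g^M \in k(p/q)$ are rational, carry out the entire calculation within rational function fields, and restore the radical extensions only at the end.
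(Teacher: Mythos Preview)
Your approach is essentially the same as the paper's (which presents Rovinsky's argument, valid in characteristic zero): your master identity $\phi=(1-w)/(u-w)$ with $\phi=vg'/g$, $u=xp'/p$, $w=yq'/q$ is exactly the paper's relation $s=(1-g)/(f-g)$ (their $s,f,g$ are your $\phi,u,w$), and your step of differentiating in $y$ using that $\phi$ depends only on $v$ is equivalent to the paper's observation that $ds/s$ is proportional to $d(p/q)/(p/q)$. Both routes yield the same separated ODE $xu'/[u(u-1)]=yw'/[w(w-1)]=a\in k$ and integrate it identically to obtain the stated normal forms.
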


\begin{proof}
The following proof, which works in characteristic zero, has been suggested by M. Rovinsky
(the general case in \cite{bt-milnor} is more involved).

Assume that there is a nontrivial
$$
I\in \overline{k(x/y)}^{\times}\cdot y\cap\overline{k(p/q)}^{\times}\cdot q.
$$
We obtain equalities in $\Omega_{K/k}$:
\begin{equation}\label{ur-na-I}
\frac{\mathrm d(I/y)}{I/y}=r\cdot\frac{\mathrm d(x/y)}{x/y}
\quad \text{ and } \quad 
\frac{\mathrm d(I/q)}{I/q}=s\cdot\frac{\mathrm d(p/q)}{p/q},
\end{equation}
for some 
$$
r\in\overline{k(x/y)}^{\times}, \quad \text{  and  } \quad 
s\in\overline{k(p/q)}^{\times}.
$$
Using the first equation, rewrite  the second as 
$$
r\cdot\frac{\mathrm d(x/y)}{x/y}+\frac{\mathrm d(y/q)}{y/q}=s\cdot\frac{\mathrm d(p/q)}{p/q},
$$
or
$$
r\frac{\mathrm dx}{x}-s\frac{\mathrm dp}{p}=
r\cdot\frac{\mathrm dy}{y}+\frac{\mathrm d(q/y)}{q/y}-s\frac{\mathrm dq}{q}.
$$
The differentials on the left and on the right are linearly independent,
thus both are zero, i.e., $r=sf=sg-g+1$,
where 
$$
f=xp'/p\in\overline{k(x)}^{\times}\quad  \text{ and  } \quad g=yq'/q\in\overline{k(y)}^{\times},
$$
and $p'$ is derivative with respect to $x$, $q'$ the derivative 
In particular, $s=\frac{1-g}{f-g}$. Applying $\mathrm d\log$ to both sides,
we get 
$$
\frac{\mathrm ds}{s}=\frac{g'\mathrm dy}{g-1}+\frac{g'\mathrm dy-f'\mathrm dx}{f-g}
=\frac{f'}{g-f}\mathrm dx+\frac{g'(1-f)}{(1-g)(f-g)}\mathrm dy.
$$
As $\mathrm ds  / s$ is proportional to 
$$
\frac{\mathrm d(p/q)}{p/q}
=\frac{p'}{p}\mathrm dx-\frac{q'}{q}\mathrm dy=f\frac{\mathrm dx}{x}-g\frac{\mathrm dy}{y}dy,
$$
we get
$$
x\frac{f'}{f}=y\frac{g'(1-f)}{(1-g)g},
$$
$$
x\frac{f'}{(1-f)f}=y\frac{g'}{(1-g)g}.
$$
Note that the left side is in $\overline{k(x)}^{\times}$,
while the right hand side is in $\overline{k(y)}^{\times}$. It follows that
$$
x\frac{f'}{(1-f)f}=y\frac{g'}{(1-g)g}=a\in k.
$$
Solving the ordinary differential equation(s), we get
$$
\frac{f}{f-1}=c_1^{-1}x^a \quad \text{ and } \quad \frac{g}{g-1}=c_2^{-1}y^a
$$
for some $c_1,c_2\in k^{\times}$ and $a\in \mathbb Q$, so
$$
f=(1-c_1x^{-a})^{-1}=x\frac{\mathrm d}{\mathrm dx}\log(x^a-c_1)^{1/a},
$$
$$
g=(1-c_2y^{-a})^{-1}=y\frac{\mathrm d}{\mathrm dy}\log(y^a-c_2)^{1/a}.
$$
Thus finally,
$$
p=b_1\cdot(x^a-c_1)^{1/a}\quad  \text{and} \quad q=b_2\cdot(y^a-c_2)^{1/a}.
$$
We can now find 
$$
s=\frac{(1-c_1x^{-a})^{-1}c_2y^{-a}}{c_2y^{-a}-c_1x^{-a}}
=\frac{c_2(x^a-c_1)}{c_2x^a-c_1y^a}
$$ 
and then
$$
r=sf=\frac{c_2x^a}{c_2x^a-c_1y^a}=(1-c(x/y)^{-a})^{-1},
$$
where $c=c_1/c_2$.
From equation (\ref{ur-na-I}) we find
$$
\mathrm d\log(I/y)=-\frac{1}{a}\frac{\mathrm dT}{T(1-T)},
$$ 
where $T=c(x/y)^{-a}$,
and thus, 
$$
I=y\cdot b_3(1-c^{-1}(x/y)^a)^{1/a}=b_0(x^a-cy^a)^{1/a}.
$$ 
\end{proof}

This functional equation has the following projective interpretation:
If $E = k(x)$ then the image of each $\P^1\subset \P(E)$ under $\Psi$ 
lies in a rational normal curve given by (2) in Proposition~\ref{prop:funct}, where $a$ 
may {\em a priori} depend on $x$. However, a  
simple lemma shows that it is actually independent of $x$ (in characteristic zero), 
thus $\Psi^{1/a} $ extends to a field homomorphism. 
(In general, it is well-defined modulo powers of $p$, this brings up purely inseparable
extensions, which are handled by an independent argument.)

\section{Bloch-Kato conjecture}
\label{sect:bloch-kato}

Let $K$ be a field and $\ell$ a prime distinct from the characteristic of $K$.
Let
$$
\boldsymbol{\mu}_{\ell^n}:=\{ \sqrt[\ell^n]{1}\,\}
\,\,\,\text{ and }\,\,\,
\Z_{\ell}(1) =\lim_{\longleftarrow} \boldsymbol{\mu}_{\ell^n}.
$$
We will assume that $K$ contains all $\ell^n$-th roots of unity
and identify $\Z_{\ell}$ and $\Z_{\ell}(1)$.
Let $\G^a_K$ be the abelianization of the maximal 
pro-$\ell$-quotient of the absolute Galois group $G_K$. 

\begin{theo}[Kummer theory]
\label{thm:ga}
There is a canonical isomorphism
\begin{equation}
\label{eqn:kummer}
\rH^1(\Gal_K,\Z/\ell^n) =\rH^1(\G^a_K,\Z/\ell^n) = K^{\times}/\ell^n.
\end{equation}
\end{theo}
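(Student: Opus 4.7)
The plan is to decompose the claimed chain of isomorphisms into the two asserted equalities and handle each with standard tools.

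For the first identification $\rH^1(G_K,\Z/\ell^n)=\rH^1(\G^a_K,\Z/\ell^n)$, I would argue that since the coefficient module has trivial $G_K$-action, we have $\rH^1(G_K,\Z/\ell^n)=\Hom_{\rm cont}(G_K,\Z/\ell^n)$. Any continuous homomorphism to $\Z/\ell^n$ takes values in a finite abelian $\ell$-group, and therefore factors through the maximal pro-$\ell$ quotient of $G_K$ and then through its abelianization $\G^a_K$. The reverse direction is obvious, giving a natural bijection, and then again $\Hom_{\rm cont}(\G^a_K,\Z/\ell^n)=\rH^1(\G^a_K,\Z/\ell^n)$.

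For the identification with $K^\times/\ell^n$, I would use the Kummer short exact sequence. Since $\ell\neq\ch(K)$, raising to the $\ell^n$-th power is surjective on $\bar K^\times$ with kernel $\boldsymbol{\mu}_{\ell^n}$, giving the exact sequence of $G_K$-modules
$$
1\ra \boldsymbol{\mu}_{\ell^n}\ra \bar K^\times \stackrel{(\cdot)^{\ell^n}}{\lra} \bar K^\times \ra 1.
$$
Taking $G_K$-cohomology yields the long exact sequence
$$
K^\times \stackrel{(\cdot)^{\ell^n}}{\lra} K^\times \ra \rH^1(G_K,\boldsymbol{\mu}_{\ell^n})\ra \rH^1(G_K,\bar K^\times).
$$
The key input is Hilbert's Theorem 90, which gives $\rH^1(G_K,\bar K^\times)=0$; this produces the isomorphism $K^\times/(K^\times)^{\ell^n}\stackrel{\sim}{\lra}\rH^1(G_K,\boldsymbol{\mu}_{\ell^n})$, sending the class of $a\in K^\times$ to the cocycle $\sigma\mapsto \sigma(\sqrt[\ell^n]{a})/\sqrt[\ell^n]{a}$.

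Finally, the assumption that $K$ contains all $\ell^n$-th roots of unity (and the identification $\Z_{\ell}=\Z_{\ell}(1)$ stipulated in the setup) implies that the $G_K$-module $\boldsymbol{\mu}_{\ell^n}$ is canonically isomorphic to $\Z/\ell^n$ with trivial action. Composing the two isomorphisms yields the stated equality. The only nontrivial ingredient is Hilbert 90; every other step is bookkeeping (continuity of cocycles, compatibility with the inverse limit defining $\G^a_K$), which I would address by noting that both sides are naturally limits over finite Galois subextensions of $K$.
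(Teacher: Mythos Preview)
Your argument is correct and is the standard proof of Kummer theory. The paper does not actually give a proof of this statement: it is stated as a classical result, followed only by an explicit description of the perfect pairing $K^{\times}/(K^{\times})^{\ell^n}\times \G^a_K/\ell^n\ra \boldsymbol{\mu}_{\ell^n}$, $(f,\gamma)\mapsto \gamma(\sqrt[\ell^n]{f})/\sqrt[\ell^n]{f}$, which matches the cocycle you wrote down. There is nothing to compare; your proof is exactly what any reader would supply.
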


More precisely, the discrete group $K^{\times}/(K^{\times})^{\ell^n}$ and the compact profinite group $\G^a_K/\ell^n$ are
Pontryagin dual to each other, for a $\boldsymbol{\mu}_{\ell^n}$-duality, 
i.e., there is a perfect pairing 
$$
K^{\times}/(K^{\times})^{\ell^n} \times \G^a_K/\ell^n \ra \boldsymbol{\mu}_{\ell^n}.
$$
Explicitly, this is given by 
$$
(f,\gamma) \mapsto  \gamma(\sqrt[\ell^n]{f})/ \sqrt[\ell^n]{f} \in \boldsymbol{\mu}_{\ell^n}.
$$   
For $K=k(X)$, with $k$ algebraically closed of characteristic $\neq \ell$,
we have 
\begin{itemize}
\item $K^{\times}/k^{\times}$ is a free $\Z$-module and
$$
K^{\times} /(K^{\times})^{\ell^n} = (K^{\times}/k^{\times})/\ell^n, \quad  \text{ for all } \quad n\in \N;
$$
\item identifying $K^{\times}/k^{\times}\stackrel{\sim}{\longrightarrow} \Z^{(\mathrm I)}$, 
one has $K^{\times}/(K^{\times})^{\ell^n}\stackrel{\sim}{\longrightarrow}(\Z/\ell^n)^{(\mathrm I)}$
and 
$$
\G^a_K/\ell^n\stackrel{\sim}{\longrightarrow}(\Z/\ell^n(1))^{\mathrm I},
$$ 
in particular, the duality 
between $\hat{K}^{\times}=\widehat{K^{\times}/k^{\times}} $ and $\G^a_K$ is modeled on that between
$$
\{ \text{functions } \mathrm I\ra \Z_{\ell} \,\text{ tending to } 0 \, 
\text{ at } \,\infty \} \, \text{ and } \, \Z_{\ell}^{\mathrm I}.
$$
Since the index set $\mathrm I$ is not finite taking double-duals increases the space of 
{\em functions with finite support} to the space of 
{\em functions with support converging to zero}, i.e., 
the support modulo $\ell^n$ is finite, for all $n\in \N$. For function fields, 
the index set is essentially the set of irreducible divisors on a projective model of the field. 
This description is a key ingredient in the reconstruction of function fields from 
their Galois groups. 
\end{itemize}

In particular, an isomorphism of Galois groups 
$$
\Psi_{K,L} \colon \G^a_K\stackrel{\sim}{\lra} \G^a_L
$$
as in Theorem~\ref{thm:0} implies a canonical isomorphism 
$$
\Psi^*\colon \hat{K}^{\times} \simeq \hat{L}^{\times}.
$$

The Bloch--Kato conjecture, now a theorem established by Voevodsky \cite{MC-2}, \cite{MC-l}, 
with crucial
contributions by Rost and Weibel \cite{Weibel-1}, \cite{Weibel-2},   
describes the cohomology of the absolute 
Galois group $G_K$ through Milnor $\rK$-theory for all $n$:
\begin{equation}
\label{eqn:KK}
\rK_n^M(K)/\ell^n = \rH^n(G_K, \Z/\ell^n).
\end{equation}
There is an alternative formulation. 
Let $\G^c_K$ be the canonical central extension of $\G^a_K$ as in the Introduction. 
We have the diagram

\[
\centerline{
\xymatrix@!{
      & G_K\ar[dl]_{\,\,\,\pi_c} \ar[dr]^{\,\,\,\pi} & \\
 \G^c_K \ar[rr]_{\pi_a}  &   &           \G^a_K \\
}
}
\]

\

\begin{theo}
\label{thm:bkk}
The Bloch--Kato conjecture \eqref{eqn:KK} is equivalent to:
\begin{enumerate}
\item 
The map 
$$
\pi^* \colon \rH^*(\G^a_K, \Z/\ell^n)\to \rH^*(\Gal_K,\Z/\ell^n)
$$ 
is surjective and
\item 
$\
\Ker(\pi_a^*) = \Ker(\pi^*)$.  
\end{enumerate}
\end{theo}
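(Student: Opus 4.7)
The plan is to translate the Bloch--Kato statement — that the Galois symbol $s_n\colon \rK_n^M(K)/\ell^n \to \rH^n(G_K,\Z/\ell^n)$ is an isomorphism for all $n$ — into two structural conditions on the pair of projections $G_K \twoheadrightarrow \G_K^c \twoheadrightarrow \G_K^a$. The first step is the formal setup. Since $K$ contains all $\ell^n$-th roots of unity, Kummer theory (Theorem~\ref{thm:ga}) combined with the pro-$\ell$ structure of $\G_K^a$ (which is torsion-free over $\Z_\ell$) yields, for $\ell$ odd,
$$
\rH^n(\G_K^a,\Z/\ell^n)\;=\;\wedge^n(K^\times/\ell^n),
$$
with the usual Bockstein modification for $\ell=2$. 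By definition, $\rK_n^M(K)/\ell^n$ is the quotient of the same exterior algebra by the Steinberg ideal $S$ generated in degree $2$ by $\chi_f\wedge\chi_{1-f}$, and the Galois symbol $s_n$ is precisely the map induced by $\pi^*$ on this quotient. So $s$ factors as
$$
\rH^n(\G_K^a,\Z/\ell^n)\twoheadrightarrow\rK_n^M(K)/\ell^n\xrightarrow{\,s_n\,}\rH^n(G_K,\Z/\ell^n),
$$
with composite $\pi^*$. Consequently, Bloch--Kato is equivalent to the conjunction: $\pi^*$ is surjective and $\Ker(\pi^*)=S$. This immediately identifies (1) with surjectivity of the symbol map.

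The second step is to recognize condition (2) as the injectivity part: $(2)\Leftrightarrow \Ker(\pi^*)=S$. I would reduce this to the unconditional identification $\Ker(\pi_a^*)=S$ inside $\rH^*(\G_K^a,\Z/\ell^n)$. The inclusion $S\subseteq \Ker(\pi_a^*)$ is classical: for any $f\in K^\times\setminus\{0,1\}$ the relation $f+(1-f)=1$ provides a compatible system of $\ell^n$-th roots in $K(\sqrt[\ell^n]{f},\sqrt[\ell^n]{1-f})$ which forces Kummer lifts of $\sigma_f$ and $\sigma_{1-f}$ to commute already in $\G_K^c$, so that $\chi_f\cup\chi_{1-f}$ dies in $\rH^2(\G_K^c,\Z/\ell^n)$; multiplicativity propagates this to higher degrees. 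Granting the reverse inclusion, condition (2) becomes literally $\Ker(\pi^*)=S$, and the equivalence is complete.

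The main obstacle will be the reverse inclusion $\Ker(\pi_a^*)\subseteq S$. Via the Hochschild--Serre spectral sequence for $1\to Z \to \G_K^c \to \G_K^a \to 1$, the degree-$2$ part of $\Ker(\pi_a^*)$ is the image of the dual commutator $c^\vee\colon\Hom(Z,\Z/\ell^n)\to \wedge^2(K^\times/\ell^n)$; one must show that every such class lies in $S$, or equivalently that the ``commuting relations'' in $\wedge^2\G_K^a$ span $S^\perp$. This is essentially the Merkurjev--Suslin theorem (the degree-$2$ case of the norm residue isomorphism), and it carries the substantive content of Theorem~\ref{thm:bkk}. Higher degrees then follow from the multiplicative structure of the norm residue map (Rost--Voevodsky). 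Thus the proposed equivalence is a formal rearrangement of Bloch--Kato around the canonical central extension $\G_K^c$, but the deep analytic input on commuting pairs detecting Steinberg symbols remains essential.
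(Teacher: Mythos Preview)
Your overall architecture matches the paper's: identify $\rH^*(\G_K^a,\Z/\ell^n)=\wedge^*(K^\times/\ell^n)$, recognize the Galois symbol as $\pi^*$ factored through the Steinberg quotient, and reduce the equivalence to the unconditional identification $\Ker(\pi_a^*)=S$. That is exactly what the paper does (with $S=I_K(n)$ and $\Ker(\pi_a^*)=IH_K(n)$).

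There is one genuine gap. In your last step you write that ``higher degrees then follow from the multiplicative structure of the norm residue map (Rost--Voevodsky).'' This is circular: Rost--Voevodsky \emph{is} the Bloch--Kato conjecture, and the theorem you are proving is an equivalence with it, so you cannot invoke it to establish the unconditional equality $\Ker(\pi_a^*)=S$ above degree $2$. The paper avoids this by arguing directly that $\Ker(\pi_a^*)$ is generated, as an ideal in $\wedge^*(K^\times/\ell^n)$, by its degree-$2$ component. This follows from the Hochschild--Serre spectral sequence for the central extension $1\to Z\to\G_K^c\to\G_K^a\to 1$: since $\G_K^a$ is torsion-free abelian and $Z$ is central, the differentials and edge maps force the kernel of $\pi_a^*$ to be the ideal generated by $\Ker\bigl(\rH^2(\G_K^a)\to\rH^2(\G_K^c)\bigr)$. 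You already set up this spectral sequence to describe the degree-$2$ piece; you simply need to push it one step further. Since $S$ is generated in degree $2$ by definition of Milnor $\rK$-theory, the entire comparison $\Ker(\pi_a^*)=S$ then reduces to degree $2$, where Merkurjev--Suslin (a proved theorem, not the full conjecture) supplies the reverse inclusion $\Ker(\pi_a^*)_2\subseteq S_2$ via the factorization $\rH^2(\G_K^a)\to\rH^2(\G_K^c)\to\rH^2(G_K)\simeq\rK_2^M(K)/\ell^n$.

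A minor remark: your argument for $S\subseteq\Ker(\pi_a^*)$ via ``commuting Kummer lifts of $\sigma_f,\sigma_{1-f}$'' is gestural; it suffices (and is cleaner) to observe that the Steinberg symbol already dies in $\rH^2(G_K)$, hence its image in $\rH^2(\G_K^c)$ lies in the kernel of $\pi_c^*$, and then use that in degree $2$ the map $\pi_c^*$ restricted to the image of $\pi_a^*$ is injective (again from the spectral sequence, or equivalently because any degree-$2$ relation in $\G_K$ is already visible in $\G_K^c$).
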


\begin{proof}
The proof uses the first two cases of the
Bloch--Kato conjecture. 
The first is \eqref{eqn:kummer}, i.e., Kummer theory.
Recall that the cohomology ring of a torsion-free abelian group is the exterior 
algebra on $\rH^1$. We apply this to $\G^a_K$; combining with \eqref{eqn:kummer}
we obtain:
$$
\rH^*(\G_K^a,\Z/\ell^n)=\wedge^* (K^{\times}/\ell^n).
$$
Since $\G^c$ is a central extension of the torsion-free abelian group $\G^a_K$, 
the kernel of the ring homomorphism
$$
\pi_a^*  \colon \rH^*(\G^a_K, \Z/\ell^n)\ra  \rH^*(\G^c_K, \Z/\ell^n) 
$$
is an ideal $IH_K(n)$ generated by 
$$
\Ker\left(\rH^2(\G^a_K, \Z/\ell^n)\ra  \rH^2(\G^c_K, \Z/\ell^n)\right) 
$$
(as follows from the standard spectral sequence argument). We have an exact sequence 
$$
0\ra IH_K(n)\ra \wedge^*(K^{\times}/\ell^n) \ra \rH^*(\G^c,\Z/\ell^n).
$$
On the other hand, we have a diagram for the Milnor $\rK$-functor:

\

\centerline{
\xymatrix{
1\ar[r] & \tilde{I}_K(n) \ar[r] \ar@{>>}[d] &  \otimes^* (K^{\times}/\ell^n) \ar[r] \ar@{>>}[d]  & \rK^M_*(K)/\ell^n \ar@{=}[d] \ar[r] & 1\\
1\ar[r] & I_K(n) \ar[r] &  \wedge^* (K^{\times}/\ell^n) \ar@{=}[d] \ar[r] & \rK^M_*(K)/\ell^n \ar[r] & 1 \\
        &               & \rH^*(\G_K^a,\Z/\ell^n) &  
 }
}

\

\noindent
Thus the surjectivity of $\pi^*$ is equivalent to the surjectivity of 
$$
\rK^M_n(K)/\ell^n\ra \rH^n(\Gal_K,\Z/\ell^n).
$$ 

Part (2) is equivalent to 
$$
IH_K(n) \simeq I_K(n),
$$
under the isomorphism above. Both ideals are generated by degree 2 components. 
In degree 2, the claimed isomorphism follows from  
the Merkurjev--Suslin theorem
$$
\rH^2(\Gal_K,\Z/\ell^n) = \rK^M_2(K)/\ell^n.
$$
\end{proof}

Thus the Bloch--Kato conjecture implies that $\G^c_K$ 
completely captures the $\ell$-part of 
the cohomology of $\Gal_K$.
This led the first author to conjecture in  \cite{B-1} 
that the ``homotopy''
structure of $\Gal_K$ is also captured by $\G^c_K$ and that
morphisms between function fields $L\ra K$  
should be captured (up to purely inseparable extensions)
by morphisms $\G^c_K\ra \G^c_L$. 
This motivated the development of the {\em almost abelian anabelian geometry}.


\

We now describe a recent related result in Galois cohomology, 
which could be considered as one of the incarnations of the general principle
formulated above.  
Let $G$ be a group and $\ell$ a prime number. 
The descending $\ell^n$-central series of $G$ is given by
$$
G^{(1,n)}=G,\quad G^{(i+1,n)}:=(G^{(i,n)})^{\ell^n} [  G^{(i,n)},G], \quad i=1, \ldots. 
$$
We write
$$
G^{c,n}=G/G^{(3,n)}, \quad G^{a,n}=G/G^{(2,n)}, 
$$
so that
$$
G^c=G^{c,0}, \quad G^a=G^{a,0}.
$$

\begin{theo}[Chebolu--Efrat--Min{\'a}{\v{c}} \cite{efrat-minac}]
Let $K$ and $L$ be fields containing $\ell^n$-th roots of 1 and 
$$
\Psi\colon \G_K\ra \G_L
$$
a continuous homomorphism. The following are equivalent:
\begin{itemize}
\item[(i)] the induced homomorphism
$$
\Psi^c \colon \G_K^{c,n} \ra \G_L^{c,n}
$$
is an isomorphism; 
\item[(ii)] 
the induced homomorphism
$$
\Psi^*\colon \rH^*(\G_L, \Z/\ell^n)\ra  \rH^*(\G_K, \Z/\ell^n)
$$
is an isomorphism.
\end{itemize}
\end{theo}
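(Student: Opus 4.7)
The plan is to use the Bloch--Kato theorem, in the reformulation of Theorem~\ref{thm:bkk}, as the bridge between the mod-$\ell^n$ cohomology of $\G_K$ and the three-step quotient $\G^{c,n}_K$. Write $V_K := \G^{a,n}_K = \G_K/\G_K^{(2,n)}$, which by Kummer theory (Theorem~\ref{thm:ga}) is Pontryagin dual to $H^1(\G_K, \Z/\ell^n)$. The group $\G^{c,n}_K$ is a central extension of $V_K$ whose kernel $\G_K^{(2,n)}/\G_K^{(3,n)}$ is a quotient of $V_K \wedge V_K \oplus V_K$ (the second summand accounting for $\ell^n$-th powers), and the extension class is encoded cohomologically by the cup product and Bockstein on $H^1(\G_K, \Z/\ell^n)$.

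For (i) $\Rightarrow$ (ii): An isomorphism $\Psi^c$ induces an isomorphism $V_L \xrightarrow{\sim} V_K$ on abelianizations, hence on $H^1$ via Kummer duality. Applying Theorem~\ref{thm:bkk} to both $K$ and $L$, the full graded ring $H^*(\G_K, \Z/\ell^n)$ is the quotient of $\wedge^*(V_K^\vee)$ by the ideal $\ker(\pi_a^*)$, which is generated in degree~$2$ by classes that can be read off purely from $\G^{c,n}_K$ (via inflation--restriction for the central extension $\G^{c,n}_K \twoheadrightarrow V_K$, using only $\Z/\ell^n$-coefficients). Since $\Psi^c$ matches these degree-$2$ kernels and the degree-$1$ parts, the induced map on the quotient rings is an isomorphism, which is exactly $\Psi^*$ on $H^*(\G_K,\Z/\ell^n)$.

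For (ii) $\Rightarrow$ (i): The isomorphism on $H^1$ produces, via Pontryagin duality, an isomorphism $V_K \cong V_L$. To lift this to the three-step quotient, I would argue that $\G^{c,n}_K$ is canonically reconstructed from the triple $(H^1(\G_K,\Z/\ell^n), \beta, \cup)$ with values in $H^2(\G_K,\Z/\ell^n)$: dualizing the map
\[
H^1(\G_K, \Z/\ell^n) \oplus \wedge^2 H^1(\G_K, \Z/\ell^n) \xrightarrow{(\beta,\, \cup)} H^2(\G_K, \Z/\ell^n)
\]
recovers the commutator and $\ell^n$-power relations defining the central kernel of $\G^{c,n}_K \twoheadrightarrow V_K$. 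Since $\Psi^*$ is a map of graded rings compatible with Bocksteins, isomorphisms in degrees~$1$ and $2$ force the two extension classes to agree, which produces the required $\Psi^c$.

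The main obstacle is the reconstruction claim underlying (ii) $\Rightarrow$ (i): that $\G^{c,n}_K$ is faithfully encoded by $H^1$, $H^2$, Bockstein, and cup product with $\Z/\ell^n$-coefficients. For $n=1$ this is a classical Hopf-type formula, but for general $n$ one must carefully handle the interaction between the Bockstein and the $\ell^n$-th power map in a central extension. The key input is again Bloch--Kato (via Theorem~\ref{thm:bkk}), which identifies the relevant subspace of $H^2(V_K,\Z/\ell^n)$ with both the inflation kernel from $H^2(\G_K,\Z/\ell^n)$ and with the extension class of $\G^{c,n}_K$; once this identification is in place, both implications fall out of the functoriality of $\Psi^*$ on these distinguished pieces of $H^{\le 2}$.
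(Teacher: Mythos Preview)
The paper does not give its own proof of this theorem: the Chebolu--Efrat--Min\'a\v{c} result is merely quoted from \cite{efrat-minac}, with no argument supplied, so there is nothing in the text to compare your proposal against directly.

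That said, your outline is essentially the strategy used in \cite{efrat-minac} and is sound. Two small points are worth flagging. First, Theorem~\ref{thm:bkk} as stated concerns $\G^c_K$ (the $n=0$ case), not $\G^{c,n}_K$; you are implicitly using the variant with $\Z/\ell^n$-coefficients and the filtration involving $\ell^n$-th powers, which is fine but should be stated. Second, in (ii)~$\Rightarrow$~(i) you speak of ``producing'' $\Psi^c$, but $\Psi^c$ is already given as the map induced by $\Psi$; the task is only to show it is an isomorphism. Your reconstruction argument does this: the isomorphism on $\rH^1$ forces $\Psi^{a,n}$ to be an isomorphism by duality, and then the isomorphism on $\rH^2$ (compatible with cup product and Bockstein) forces the kernel of $\G^{c,n}_K\twoheadrightarrow \G^{a,n}_K$ to map isomorphically, since that kernel is dual to the image of $(\beta,\cup)$ in $\rH^2$. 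The ``main obstacle'' you identify---that $\G^{c,n}_K$ is faithfully encoded by $(\rH^1,\rH^2,\beta,\cup)$---is exactly the content of the relevant lemma in \cite{efrat-minac}, and it is where Merkurjev--Suslin (for degree~$2$) and the full Bloch--Kato theorem (for surjectivity in higher degrees) enter.
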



\section{Commuting pairs and valuations}
\label{sect:valuations}

A \emph{value group}, $\Gamma$, is a totally ordered (torsion-free) abelian group. 
A (nonarchimedean) \emph{valuation} on a field $K$
is a pair $\nu=(\nu,\Gamma_{\nu})$ consisting of a 
value group $\Gamma_{\nu}$ and a map
$$
\nu\,:\, K\ra \Gamma_{\nu,\infty} = \Gamma_{\nu}\cup \infty
$$
such that
\begin{itemize}
\item $\nu\,:\, K^{\times}\ra \Gamma_{\nu}$ is a surjective homomorphism;
\item $\nu(\kappa+\kappa')\ge 
\min(\nu(\kappa),\nu(\kappa'))$ for all $\kappa,\kappa'\in K$;
\item $\nu(0)=\infty$.
\end{itemize}
The set of all valuations of $K$ is denoted by $\Val_K$.

Note that $\bar{\mathbb F}_p$ admits only the trivial valuation; 
we will be mostly interested in function fields $K=k(X)$ over $k=\bar{\mathbb F}_p$. 
A valuation is a \emph{flag map} on $K$: every finite-dimensional $\bar{\F}_p$-subspace, 
and also $\mathbb F_p$-subspace, 
$V\subset K$ has a flag $V=V_1\supset V_2 \ldots$ such that $\nu$ is constant on 
$V_j\setminus V_{j+1}$. 
Conversely, every flag map gives rise to a valuation. 

Let $K_{\nu}$, $\mathfrak o_{\nu}, \mathfrak m_{\nu}$, and 
$\KK_{\nu}:=\mathfrak o_{\nu}/\mathfrak m_{\nu}$  be
the completion of $K$ with respect to $\nu$, the valuation ring of $\nu$,
the maximal ideal of $\mathfrak o_{\nu}$, and the residue field, respectively.
A valuation of $K=\bar{\F}_p(X)$,  is called {\em divisorial} if the residue field
is the function field of a divisor on $X$; the set of such valuations is denoted by $\DVal_K$.  
We have exact sequences: 
\begin{equation*}
\label{eqn:1}
1\ra \mathfrak o_{\nu}^{\times}\ra K^{\times}\rightarrow \Gamma_{\nu}\ra 1
\end{equation*}
\begin{equation*}
\label{eqn:2}
1\ra (1+\mathfrak m_{\nu})\ra \mathfrak o_{\nu}^{\times}\ra \KK_{\nu}^{\times}\ra 1.
\end{equation*}
A homomorphism $\chi : \Ga_{\nu} \to \mathbb Z_{\ell}(1)$ gives rise to
a homomorphism
$$
\chi \circ \nu \, :\, K^{\times} \to \Z_{\ell}(1),
$$
thus to an element of $\G^a_{K}$, 
an \emph{inertia element} of $\nu$. 
These form the \emph{inertia} subgroup $\I^a_{\nu}\subset \G^a_{K}$. 
The \emph{decomposition group} $\D^a_{\nu}$
is the image of $\G^a_{K_{\nu}}$ in $\G^a_{K}$. 
We have an embedding 
$\G^a_{K_{\nu}}\hookrightarrow \G^a_K$ and an isomorphism 
$$
\D^a_{\nu}/\I^a_{\nu}\simeq \G^a_{\KK_{\nu}}.
$$

We have a dictionary (for $K=k(X)$ and $k=\bar{\F}_p$):
$$
\begin{array}{rcl}
\G^a_K     & = & \{\text{homomorphisms } \gamma \,:\, K^{\times}/k^{\times} \ra \Z_{\ell}(1)\}, \\
\D^a_{\nu} & = & 
\{ \mu\in \G^a_K\,|\, \mu\,\,\,{\rm trivial }\,\, \, {\rm on}\,\,\,
(1+\mathfrak m_{\nu})\},\\
\I^a_{\nu}& = & 
\{ \iota\in \G^a_K\,|\, \iota\,\,\,{\rm trivial }\,\, \, {\rm on}\,\,\,
\mathfrak o_{\nu}^{\times}\}.
\end{array}
$$
In this language, inertia elements define flag maps on $K$. 
If $E\subset K$ is a subfield, the 
corresponding homomorphism of Galois groups $\G_K\ra \G_E$ 
is simply the restriction of special $\Z_{\ell}(1)$-valued functions 
on the space $\P_k(K)$ to the projective subspace $\P_k(E)$. 

\

\noindent
The following result is fundamental in our approach to anabelian geometry. 

\begin{theo} \cite{BT}, \cite[Section 4]{bt0}
\label{thm:sigma}
Let $K$ be any field containing a subfield $k$ with $\# k\ge 11$. 
Assume that there exist nonproportional homomorphisms
$$
\gamma,\gamma' \colon K^{\times}\ra R
$$
where $R$ is either $\Z$, $\Z_{\ell}$ or $\Z/\ell$,
such that
\begin{itemize}
\item[(1)] $\gamma,\gamma'$ are trivial on $k^{\times}$;
\item[(2)] the restrictions of the 
$R$-module $\langle \gamma,\gamma', 1\rangle$ to every projective line 
$\P^1\subset \P_k(K) = K^{\times}/k^{\times}$ has $R$-rank $\le 2$.  
\end{itemize}
Then there exists a valuation $\nu$ of $K$ with value group $\Gamma_{\nu}$, 
a homomorphism $\iota \colon \Gamma_\nu\ra R$,
and an element $\iota_{\nu}$ in the $R$-span of $\gamma,\gamma'$
such that
$$
\iota_{\nu} =  \iota \circ \nu.
$$
\end{theo}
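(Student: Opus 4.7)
The plan is to extract from the rank hypothesis a single $R$-linear combination $\iota_\nu := \alpha\gamma + \beta\gamma'$ that satisfies the nonarchimedean (``flag'') inequality $\iota_\nu(f+g) \ge \min(\iota_\nu(f),\iota_\nu(g))$ on $K$. Once such a combination is produced, the set $\mathfrak o_\nu := \{0\} \cup \{f\in K^\times : \iota_\nu(f)\ge 0\}$ is a valuation ring (multiplicativity is automatic since $\iota_\nu$ is a homomorphism on $K^\times$, the sum stability is the flag inequality, and the total order on the value group is induced by $\iota_\nu$), and $\iota_\nu = \iota\circ\nu$ factors tautologically through the associated $\nu\colon K^\times \to \Gamma_\nu$, giving the conclusion of the theorem.

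First, I would reformulate the rank hypothesis locally. On each $k$-line $L = L(f,g) \subset \P_k(K)$, the rank-$\le 2$ condition forces $\Phi := (\gamma,\gamma')\colon K^\times/k^\times \to R^2$ to send $L$ into an affine line $\ell_L \subset R^2$. The hypothesis $\#k \ge 11$ --- so $L$ has at least $12$ points --- is then used to extract a \emph{flag dichotomy}: on each $L$, the map $\Phi|_L$ takes a generic value at all but at most one point, and the possible exceptional value differs from the generic one by a positive integer multiple of a primitive vector $d_L \in R^2$. This is modeled on the elementary identity $\nu(af+bg) = \min(\nu(f),\nu(g))$, which holds for all but one $(a:b) \in \P^1(k)$ whenever $\nu$ is a genuine valuation, and the cardinality hypothesis is needed to rule out accidental configurations of collinear images that would not come from a flag structure.

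Next, I would globalize the directions $d_L$, aiming for a single primitive $d \in R^2$ such that every $d_L$ is a positive multiple of $d$. Two $k$-lines $L_1,L_2$ meeting at a point $P$ span a $k$-plane in $\P_k(K)$, and applying the rank condition to other lines of the pencil through $P$ inside this plane forces $d_{L_1}$ and $d_{L_2}$ to be proportional in $R^2$; propagating through the incidence graph of $\P_k(K)$, which is connected because $\P_k(K)$ is a projective structure of dimension $\ge 2$ (Section~\ref{sect:projective}), upgrades this to a single projective direction $[d] \in \P^1(R)$. Choosing $(\alpha,\beta) \in R^2$ to be an $R$-linear functional vanishing on $d$, the combination $\iota_\nu := \alpha\gamma + \beta\gamma'$ agrees with the generic value on all but at most one point of each $L$ and is strictly smaller at the exceptional point, which is precisely the ultrametric inequality needed in paragraph one.

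The main obstacle is the globalization step, i.e., proving that the locally-defined directions $d_L \in \P^1(R)$ coincide across all $k$-lines. \emph{A priori} each $L$ gives its own direction, and rigidifying them to a single $[d]$ requires a planar incidence argument in $2$-dimensional $k$-subspaces of $K$; this is where the hypothesis $\#k\ge 11$ enters essentially, supplying enough concurrent and collinear configurations for a Pappus/Desargues-style argument. The torsion case $R = \Z/\ell$ also needs extra care, since ``positive multiple'' is not intrinsic modulo $\ell$; the remedy is to first lift $\gamma,\gamma'$ to $\Z$-valued homomorphisms on the (torsion-free) image of $K^\times/k^\times$ in $R^2$, run the argument there, and only reduce modulo $\ell$ at the very end.
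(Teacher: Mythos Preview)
Your high-level strategy --- produce a nontrivial $R$-linear combination of $\gamma,\gamma'$ that is a \emph{flag map} on $\P_k(K)$, then read off the valuation --- is exactly the paper's plan. But the execution diverges from the paper's and contains two real gaps.

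\textbf{The flag dichotomy is asserted, not proved.} The rank-$\le 2$ hypothesis on a single line $L$ only says that $\Phi(L)$ lies in an affine line $\ell_L\subset R^2$; it does \emph{not} force $\Phi|_L$ to take at most two values. Your claim that $\Phi|_L$ is constant off a single point is precisely the statement that the pair $(\gamma,\gamma')$ is already a flag map on $L$ --- which is strictly stronger than the theorem's conclusion (that \emph{some} combination is). The paper does not prove this line-by-line; instead it argues by contradiction, localizing to a single $\P^2\subset\P_k(K)$ on which no element of $\sigma=\langle\gamma,\gamma'\rangle$ is a flag map, and then uses two planar facts: (i) for $\#k>3$, a map $\P^2(k)\to R$ is a flag map iff its restriction to every $\P^1\subset\P^2$ is; and (ii) the collineation property forces $\Phi(\P^2)$ to lie in an affine line together with at most one extra point. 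The contradiction is then extracted from a finite combinatorial analysis of maps $\P^2(k)\to\{\bullet,\circ,\star\}$ sending every line to exactly two points. The $\#k\ge 11$ hypothesis is used to find the bad $\P^2$ and to run this finite analysis, not to get a dichotomy on individual $\P^1$'s.

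\textbf{Orthogonal to $d$ gives the zero map.} Even granting your flag dichotomy and a global direction $d$, choosing $(\alpha,\beta)\perp d$ makes $\iota_\nu=\alpha\gamma+\beta\gamma'$ constant on each $\ell_L$, hence constant on each $L$; since $\iota_\nu(1)=0$, this forces $\iota_\nu\equiv 0$. What you actually want is a functional \emph{along} $d$, so that the exceptional value on each line is strictly \emph{larger} (not smaller) than the generic one --- but ``positive multiple'' and ``larger'' have no intrinsic meaning in $\Z_\ell$ or $\Z/\ell$, and your proposed remedy (lift to $\Z$) does not obviously interact well with the globalization step. The paper sidesteps all of this by never naming a direction $d$: its $\P^2$-level argument directly produces a flag map in the span, and the order on $\Gamma_\nu$ is recovered afterwards from the scale of $k$-subspaces.
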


In (2), $\gamma, \gamma'$, and $1$ are viewed as functions on a projective line
and the condition states simply that these functions are linearly dependent.

This general theorem can be applied in the 
following contexts: $K$ is a function field over $k$, 
where $k$ contains all $\ell$-th roots of its elements and $R=\Z/\ell$, 
or $k=\bar{\F}_p$ with $\ell\neq p$ and $R=\Z_{\ell}$. 
In these situations, a homomorphism $\gamma \colon K^{\times}\ra R$ 
(satisfying the first condition) corresponds via
Kummer theory to an element in $\G^a_K/\ell$, resp. $\G^a_K$. 
Nonproportional elements $\gamma,\gamma'\in \G^a_K$ lifting to commuting elements in $\G^c_K$
satisfy condition (2). 
Indeed, for 1-dimensional function fields $E\subset K$
the group  $\G_{E}^c$ is a free central extension of $\G^a_E$. 
This holds in particular for $k(x)\subset K$. Hence $\gamma,\gamma'$ are proportional
on any $\P^1$ containing $1$; the restriction of $\sigma=\langle \gamma, \gamma'\rangle$ 
to such $\P^1$ is isomorphic to $\Z_\ell$. 
Property (2) follows since every $\P^1\subset P_k(K)$ 
is a translate, with respect to multiplication in $\P_k(K)=\K^{\times}/k^{\times}$, 
of the ``standard'' $\P^1=\P_k(k\oplus k x)$, $x\in K^{\times}$. 
Finally, the element $\iota_{\nu}$ obtained in the 
theorem is an inertia element 
for $\nu$, by the dictionary above.

\begin{coro}
\label{coro:ss}
Let $K$ be a  function field of an algebraic variety $X$ over 
an algebraically closed field $k$ of dimension $n$.
Let $\sigma\in \Sigma_K$ be a liftable subgroup. Then 
\begin{itemize}
\item $\rk_{\Z_{\ell}}(\sigma)\le n$;
\item there exists a valuation $\nu\in \Val_K$ and a subgroup $\sigma'\subseteq \sigma$ 
such that $\sigma'\subseteq \I^a_{\nu}$, $\sigma\subset \D^a_{\nu}$, and $\sigma/\sigma'$ is
topologically cyclic.
\end{itemize}
\end{coro}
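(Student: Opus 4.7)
The plan is to combine Theorem \ref{thm:sigma} with an inductive reduction via residue fields. The starting observation is that, since $\sigma\in\Sigma_K$ lifts to an abelian subgroup of $\G^c_K$, every pair of nonproportional elements $\gamma,\gamma'\in\sigma$ lifts to a commuting pair in $\G^c_K$. The paragraph following Theorem \ref{thm:sigma} shows that such commuting pairs satisfy hypothesis (2) of that theorem: they are linearly dependent on every $\P^1\subset\P_k(K)$ containing $1$, hence, by translation by the group law on $\P_k(K)=K^\times/k^\times$, on every projective line. Hypothesis (1) is automatic since $k=\bar{\F}_p$ is divisible and $R=\Z_\ell$.

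\textbf{Step 1 (produce one valuation).} Since $\sigma$ is not procyclic, choose nonproportional $\gamma_1,\gamma_2\in\sigma$ and apply Theorem \ref{thm:sigma}. This yields a valuation $\nu\in\Val_K$, a character $\iota\colon\Ga_\nu\to\Z_\ell$, and an inertia element
\[
\iota_\nu=\iota\circ\nu \in \I^a_\nu\cap\sigma.
\]

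\textbf{Step 2 (show $\sigma\subset\D^a_\nu$).} The key fact, which is the main content of the theory of commuting pairs in \cite{BT}, is that if an element $\gamma\in\G^a_K$ lifts together with a fixed inertia element $\iota_\nu$ to a commuting pair in $\G^c_K$, then $\gamma\in\D^a_\nu$. Concretely, applying Theorem \ref{thm:sigma} to the pair $(\iota_\nu,\gamma)$ yields a valuation $\nu'$ whose inertia contains a linear combination of $\iota_\nu$ and $\gamma$; a comparison-of-flags argument (the inertia element $\iota_\nu$ already determines $\nu$ up to refinement on its valuation ideal) forces $\nu'$ to dominate $\nu$, and in particular $\gamma$ to be trivial on $1+\mathfrak m_\nu$, i.e.\ $\gamma\in\D^a_\nu$ via the dictionary given just before Theorem \ref{thm:sigma}. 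Since every $\gamma\in\sigma$ commutes (after lifting) with $\iota_\nu$, the whole $\sigma$ lies in $\D^a_\nu$.

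\textbf{Step 3 (induct on $n=\trdeg_k K$).} Set $\sigma_1:=\sigma\cap\I^a_\nu$. Under the isomorphism $\D^a_\nu/\I^a_\nu\simeq\G^a_{\KK_\nu}$, the image $\bar\sigma$ of $\sigma$ is again liftable: the central-extension structure restricts compatibly because the subfield-to-Galois-group correspondence sends the central extension of $\G^a_{\KK_\nu}$ to the image of the central extension upstairs. If $\bar\sigma$ is topologically procyclic, set $\sigma':=\sigma_1$ and we are done. Otherwise Abhyankar's inequality gives $\trdeg_k\KK_\nu\le n-1$ (since $\Ga_\nu$ has positive rational rank), and the inductive hypothesis applied to $\bar\sigma\subset\G^a_{\KK_\nu}$ furnishes a valuation $\bar\mu$ of $\KK_\nu$ with the desired properties. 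Composing $\nu$ with the pullback of $\bar\mu$ produces the valuation on $K$ that works for $\sigma$; the rank bound follows from $\rk_{\Z_\ell}(\sigma)\le\rk_{\Z_\ell}(\sigma_1)+\rk_{\Z_\ell}(\bar\sigma)$ combined with the inductive bound and Abhyankar.

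\textbf{Main obstacle.} The delicate point is Step 2, namely promoting the single inertia element $\iota_\nu$ produced by Theorem \ref{thm:sigma} into the statement that \emph{all} of $\sigma$ lies in $\D^a_\nu$. This is exactly the content of the ``centralizer-of-an-inertia-element'' analysis of commuting pairs in \cite{BT}, and any proof must show that the valuation attached to a commuting pair is compatible with the valuation attached to any other commuting subpair inside the liftable $\sigma$. Everything else is an inductive unwinding, once this compatibility is in hand.
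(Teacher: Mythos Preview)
The paper states Corollary~\ref{coro:ss} without proof, deferring to \cite{BT} and \cite[Section~4]{bt0}; your inductive scheme via Theorem~\ref{thm:sigma} and passage to residue fields is exactly the argument carried out there, so the overall architecture is correct and matches the intended proof.

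Two points deserve tightening. First, your Step~2 is the heart of the matter and your justification (``comparison-of-flags \ldots forces $\nu'$ to dominate $\nu$'') is a placeholder rather than an argument. The actual mechanism in \cite{BT} is that a nonzero inertia element $\iota_\nu$, viewed as a flag map on $\P_k(K)$, already determines the coarsest valuation $\nu_0$ with $\iota_\nu\in\I^a_{\nu_0}$ (via the scale of subspaces it defines), and then any $\gamma$ commuting with $\iota_\nu$ in $\G^c_K$ must restrict to a flag map compatible with this scale on every $\P^1$, which is precisely the condition $\gamma\in\D^a_{\nu_0}$. You correctly flag this as the main obstacle; just be aware that invoking Theorem~\ref{thm:sigma} again on the pair $(\iota_\nu,\gamma)$ does not by itself pin down $\nu$---one needs the additional input that $\iota_\nu$ is already a flag map.

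Second, in Step~3 two small gaps: (i) the residue field $\KK_\nu$ need not be finitely generated over $k$, so the inductive hypothesis should be stated for arbitrary extensions of $k$ of bounded transcendence degree rather than for function fields of varieties (Theorem~\ref{thm:sigma} only needs $\#k\ge 11$, so this is harmless once noted); (ii) the liftability of $\bar\sigma$ in $\G^c_{\KK_\nu}$ requires knowing that the natural map $\D^c_\nu\to\G^c_{\KK_\nu}$ exists and carries the abelian lift of $\sigma$ to an abelian subgroup---this is standard ramification theory for the pro-$\ell$ tower, but should be said. With these two clarifications your induction and the Abhyankar bound go through as written.
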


Theorem~\ref{thm:sigma} and its
Corollary~\ref{coro:ss} allow to {\em recover} 
inertia and decomposition groups of valuations from 
$(\G^a_K,\Sigma_K)$. In reconstructions of function fields we 
need only divisorial valuations; these can be
characterized as follows: 

\begin{coro}
Let $K$ be a  function field of an algebraic variety $X$ over $k=\bar{\F}_p$ of dimension $n$.
If $\sigma_1,\sigma_2\subset \G^a_K$ are maximal liftable subgroups of 
$\Z_{\ell}$-rank $n$
such that $\I^a:=\sigma_1\cap \sigma_2$ is topologically cyclic 
then there exists a divisorial valuation
$\nu\in \DVal_K$ such that $\I^a=\I^a_{\nu}$. 
\end{coro}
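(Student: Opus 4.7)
The plan is to refine each $\sigma_i$ to the full inertia group of a rank-$n$ Abhyankar flag valuation $\tilde\nu_i$, and then to recognize $\I^a$ as the inertia group of the finest common coarsening of $\tilde\nu_1$ and $\tilde\nu_2$, which will turn out to be divisorial.

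First I would apply the preceding Corollary to each $\sigma_i$: this produces $\nu_i\in\Val_K$ and a subgroup $\sigma_i'\subseteq\sigma_i$ with $\sigma_i'\subseteq\I^a_{\nu_i}$, $\sigma_i\subseteq\D^a_{\nu_i}$, and $\sigma_i/\sigma_i'$ topologically cyclic. Because $\rk_{\Z_\ell}\sigma_i=n$ is maximal, the Abhyankar inequality for $\nu_i$ must saturate, giving $\rk(\I^a_{\nu_i})+\trdeg_k\KK_{\nu_i}=n$. The cyclic complement $\sigma_i/\sigma_i'$ embeds, via $\D^a_{\nu_i}/\I^a_{\nu_i}\simeq\G^a_{\KK_{\nu_i}}$, into the Galois group of the residue field as a liftable rank-one subgroup; re-applying the Corollary on $\KK_{\nu_i}$ realizes this cyclic factor as an inertia element of some valuation $\mu_i$ of $\KK_{\nu_i}$. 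Composing $\nu_i$ with $\mu_i$ (and iterating at most $n$ times) produces $\tilde\nu_i\in\Val_K$ with $\I^a_{\tilde\nu_i}=\sigma_i$. Since $\rk\I^a_{\tilde\nu_i}=n$, Abhyankar's equality forces $\trdeg_k\KK_{\tilde\nu_i}=0$, i.e.\ $\KK_{\tilde\nu_i}=k=\bar{\F}_p$, and $\D^a_{\tilde\nu_i}=\I^a_{\tilde\nu_i}=\sigma_i$.

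Next I would let $\nu$ be the finest common coarsening of $\tilde\nu_1$ and $\tilde\nu_2$, that is, the valuation whose ring is the smallest valuation ring of $K$ containing both $\mathfrak o_{\tilde\nu_1}$ and $\mathfrak o_{\tilde\nu_2}$. Because $\tilde\nu_i$ refines $\nu$, we have $\I^a_\nu\subseteq\I^a_{\tilde\nu_i}=\sigma_i$ and hence $\I^a_\nu\subseteq\I^a$. Conversely, any $\iota\in\I^a$ acts trivially on $\mathfrak o_{\tilde\nu_1}^\times$ and on $\mathfrak o_{\tilde\nu_2}^\times$, hence on $\mathfrak o_\nu^\times$, placing it in $\I^a_\nu$. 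So $\I^a=\I^a_\nu$ is topologically cyclic, and $\Gamma_\nu$ has rational rank one. Writing $\tilde\nu_i=\mu_i'\circ\nu$ with $\mu_i'$ a valuation of $\KK_\nu$, the rank-$n$ Abhyankar equality for $\tilde\nu_i$ combined with $\trdeg_k\KK_{\tilde\nu_i}=0$ forces $\trdeg_k\KK_\nu=n-1$. A rank-one valuation of a function field $K/k$ whose residue field has transcendence degree $\trdeg_k K-1$ is divisorial: $\KK_\nu$ is the function field of a prime divisor on a suitable normal model of $K$, and $\nu$ is the associated order of vanishing. This gives $\nu\in\DVal_K$ with $\I^a=\I^a_\nu$, as required.

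The hard part will be the refinement step, i.e., realizing $\sigma_i$ as the full inertia of a rank-$n$ valuation. The delicate issue is to verify that the cyclic quotient $\sigma_i/\sigma_i'$, viewed inside $\G^a_{\KK_{\nu_i}}$, inherits liftability to a central extension of $\G^a_{\KK_{\nu_i}}$ so that the Corollary applies on the residue field. This descent of liftability from $\G^c_K$ to $\G^c_{\KK_{\nu_i}}$ is not formal; it relies essentially on $k=\bar{\F}_p$ (so residue fields are again function fields of lower dimension over $\bar{\F}_p$), on the maximality of $\sigma_i$ among liftable subgroups of rank $n$, and on the compatibility of the canonical central extension with restriction to decomposition groups. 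Once this descent is in place, the rest is standard valuation theory.
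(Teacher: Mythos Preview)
Your overall strategy---realize each maximal liftable $\sigma_i$ as the full inertia group $\I^a_{\tilde\nu_i}$ of a rank-$n$ valuation, then take the finest common coarsening---is exactly the approach behind this corollary (the paper states it without proof, deferring to \cite{BT}, \cite{bt0}). The common-coarsening half is essentially correct; the one step you pass over, namely that $\iota$ trivial on both $\mathfrak o_{\tilde\nu_1}^\times$ and $\mathfrak o_{\tilde\nu_2}^\times$ forces $\iota$ trivial on $\mathfrak o_\nu^\times$, follows because the induced valuations $\mu_1',\mu_2'$ on $\KK_\nu$ are independent, so by approximation $\mathfrak o_{\mu_1'}^\times\cdot\mathfrak o_{\mu_2'}^\times=\KK_\nu^\times$, whence $\mathfrak o_{\tilde\nu_1}^\times\cdot\mathfrak o_{\tilde\nu_2}^\times=\mathfrak o_\nu^\times$.

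The refinement half, however, has a genuine gap, and you have misdiagnosed where it lies. Corollary~\ref{coro:ss} (and the underlying Theorem~\ref{thm:sigma}) applies only to subgroups in $\Sigma_K$, which by definition are \emph{not procyclic}. After one application you already have $\sigma_i/\sigma_i'$ topologically cyclic inside $\G^a_{\KK_{\nu_i}}$, so there is nothing to ``re-apply'': the Corollary says nothing about rank-one subgroups. Your stated worry---descent of liftability---is not the issue, since a procyclic subgroup is trivially liftable. The actual obstruction is that an arbitrary procyclic subgroup of $\G^a_E$, for $E$ the function field of a curve over $\bar\F_p$, need \emph{not} be the inertia group of any closed point (think of a curve of positive genus: $\G^a_E$ has many rank-one subgroups, but inertia subgroups correspond only to points). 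So the inductive step, as written, does not produce the valuation $\mu_i$.

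The way this is handled in \cite{BT}, \cite{bt0} does not iterate the Corollary down to rank one. Instead one uses maximality of $\sigma_i$ together with the fact that, at the $\G^c$-level, the inertia subgroup lifts \emph{centrally} inside the decomposition group (since $k$ contains $\mu_{\ell^\infty}$, the conjugation action of $\D_\nu/\I_\nu$ on $\I_\nu^a$ is trivial). This makes $\langle \I^a_{\nu_i},\sigma_i\rangle$ liftable, hence of rank $\le n$, forcing $\I^a_{\nu_i}\subseteq\sigma_i$; the remaining cyclic piece is then controlled not by re-applying the Corollary but by playing off the maximal liftable subgroups against one another. Once that identification $\sigma_i=\I^a_{\tilde\nu_i}$ is in hand, your coarsening argument finishes the proof.
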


Here we restricted to $k=\bar{\F}_p$ to avoid a discussion of mixed characteristic phenomena.
For example, the obtained valuation may be a divisorial valuation of a {\em reduction} 
of the field, and not of the field itself.

\

This implies that an isomorphism of Galois groups 
$$
\Psi \colon \G^a_K\ra \G^a_L
$$
inducing a bijection of the sets of liftable subgroups
$$
\Sigma_K = \Sigma_L
$$
induces a bijection of the sets of inertial and decomposition subgroups of valuations
$$
\{ \I^a_{\nu}\}_{\nu\in \DVal_K}  = \{\I^a_{\nu}\}_{\nu\in \DVal_L}, \quad 
\{ \D^a_{\nu}\}_{\nu\in \DVal_K}  = \{\D^a_{\nu}\}_{\nu\in \DVal_L}.
$$
Moreover, $\Psi$ maps topological generators $\delta_{\nu,K}$ of procyclic subgroups $\I^a_{\nu}\subset \G^a_K$, 
for $\nu\in \DVal_K$, to generators $\delta_{\nu,L}$ of corresponding inertia subgroups in $\G^a_L$, 
which pins down a generator up to the action of $\Z_{\ell}^{\times}$.

\

\

Here are two related results concerning the reconstruction of valuations.  

\begin{theo}[Efrat \cite{efrat}]
Assume that $\char(K)\neq \ell$,  $-1\in (K^{\times})^{\ell}$, and that
$$
\wedge^2(K^{\times}/(K^{\times})^{\ell}) \stackrel{\sim}{\lra}\rK^M_2(K)/\ell.
$$
Then there exists a valuation $\nu$ on $K$ such that 
\begin{itemize}
\item $\char(\KK_{\nu})\neq \ell$;
\item $\dim_{\F_{\ell}}(\Gamma_{\nu}/\ell) \ge \dim_{\F_{\ell}}(K^{\times}/(K^{\times})^\ell) -1$;
\item either  $\dim_{\F_{\ell}}(\Gamma_{\nu}/\ell) = \dim_{\F_{\ell}}(K^{\times}/(K^{\times})^\ell)$
or $\KK_{\nu}\neq \KK_{\nu}^{\ell}$. 
\end{itemize}
\end{theo}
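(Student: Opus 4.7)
My approach is to reformulate the cohomological hypothesis as a multiplicative rigidity property of $K$ and then run a Ware/Arason--Elman--Jacob-type construction to produce the valuation. By Merkurjev--Suslin,
\[
\rK^M_2(K)/\ell \;\simeq\; \rH^2(G_K,\Z/\ell),
\]
and under this identification the map $\wedge^2(K^\times/(K^\times)^\ell)\to \rK^M_2(K)/\ell$ is the cup product on $\rH^1(G_K,\Z/\ell)=K^\times/(K^\times)^\ell$ coming from Kummer theory. This cup product is always surjective (Milnor $\rK_2$ is generated by symbols), so the hypothesis is that it is injective, i.e., $K$ is $\ell$-\emph{rigid}: for any $a,b\in K^\times$, the symbol relation $\{a,b\}\in\ell\cdot\rK^M_2(K)$ already forces $b\in \langle a\rangle\cdot(K^\times)^\ell$. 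The auxiliary hypothesis $-1\in(K^\times)^\ell$ only matters at $\ell=2$, where it removes the asymmetry between rigidity and superrigidity; it also ensures that $\langle -t\rangle=\langle t\rangle$ modulo $(K^\times)^\ell$, which is convenient when invoking Steinberg.

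The main step is to convert rigidity into a valuation. Following the strategy of Ware and its refinements in Efrat's work, I would exhibit a subgroup $U\subset H:=K^\times/(K^\times)^\ell$ of $\F_\ell$-dimension at most one whose preimage $\tilde U\subset K^\times$, together with $0$, forms a valuation ring $\mathfrak o_\nu$; the quotient $H/U$ then provides $\Gamma_\nu/\ell$. The Steinberg relation $\{-t,1+t\}=\{-t,1-(-t)\}=0$, combined with rigidity and $-1\in(K^\times)^\ell$, yields for every $t\in K^\times\setminus\{-1\}$ the inclusion $1+t\in\langle t\rangle\cdot(K^\times)^\ell$, which furnishes the additive data needed to see that a suitably chosen $\tilde U$ is closed under addition. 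I expect the genuinely delicate point---the main obstacle---to be the coherent choice of $U$: one must locate a single one-dimensional quotient of $H$ that simultaneously controls the Steinberg constraints across all of $K^\times$, and the verification that the resulting set is indeed a valuation ring (and not merely multiplicatively and tangentially additively closed) requires careful combinatorial bookkeeping. It is precisely here that rigidity must be deployed in full strength.

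Once $\nu$ is in hand, the numerical conclusions follow by tensoring the exact sequences
\[
1\to \mathfrak o_\nu^\times \to K^\times \to \Gamma_\nu \to 1 \qquad\text{and}\qquad 1\to 1+\mathfrak m_\nu \to \mathfrak o_\nu^\times \to \KK_\nu^\times \to 1
\]
with $\Z/\ell$. The first yields a surjection $H\twoheadrightarrow \Gamma_\nu/\ell$ whose kernel is the image of $\mathfrak o_\nu^\times$ in $H$; by construction this image has dimension at most one, giving $\dim_{\F_\ell}(\Gamma_\nu/\ell)\ge \dim_{\F_\ell}(H)-1$. The dichotomy in the third bullet falls out of the second sequence modulo $\ell$: either the kernel of $H\to\Gamma_\nu/\ell$ is already trivial (equality in the dimension bound) or the image of $\mathfrak o_\nu^\times$ in $\KK_\nu^\times$ detects a nontrivial $\KK_\nu^\times/\ell$. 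Finally, $\char(\KK_\nu)\neq \ell$ is forced by the construction: if the residue characteristic were $\ell$, then $1+\mathfrak m_\nu$ would be $\ell$-divisible and contribute extra $\ell$-th roots to $\mathfrak o_\nu^\times$, enlarging the kernel of $H\to\Gamma_\nu/\ell$ beyond one dimension and contradicting the extremal property that made the construction of $U$ possible.
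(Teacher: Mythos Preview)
Your outline follows the classical Ware/Arason--Elman--Jacob/Efrat route: reinterpret the hypothesis as $\ell$-rigidity, build the valuation ring directly from the multiplicative and Steinberg data, and read off the numerics from the standard valuation exact sequences. This is essentially the argument in Efrat's original paper, and as a sketch it is sound; you are honest that the coherent choice of $U$ and the verification that $\tilde U\cup\{0\}$ is actually a ring is where the work lies.

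The paper, however, does not reproduce Efrat's proof at all. It states the theorem as a citation and then indicates, in the paragraph immediately following, how the result can be recovered from its own machinery (Theorem~\ref{thm:sigma} and Corollary~\ref{coro:ss}): the hypothesis $\wedge^2(K^\times/(K^\times)^\ell)\simeq\rK_2^M(K)/\ell$ is read as saying that $\G_K^a\pmod\ell$ is itself liftable, i.e., the whole abelianization behaves like a commuting pair; Corollary~\ref{coro:ss} then furnishes a valuation whose abelianized inertia has corank at most one in $\G_K^a\pmod\ell$, and the two alternatives in the third bullet correspond to corank zero versus one. This deduction is carried out under the extra standing assumption that $K$ contains an algebraically closed subfield and $\ell\neq 2$, so it does not recover Efrat's theorem in full generality; your approach, by contrast, needs only $-1\in(K^\times)^\ell$ and $\char(K)\neq\ell$. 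What the paper's route buys is conceptual unity with the commuting-pairs framework used throughout; what your route buys is the sharp statement without auxiliary hypotheses.

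One small caution: your argument for $\char(\KK_\nu)\neq\ell$ via ``$1+\mathfrak m_\nu$ would be $\ell$-divisible'' is not correct as stated, since $\ell$-divisibility of principal units generally requires a Henselian or completeness hypothesis. In the actual construction the residue characteristic is controlled more directly by the way $U$ is chosen inside $H$; you should not expect the divisibility shortcut to work.
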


In our terminology, under the assumption that $K$ contains an algebraically closed subfield $k$ 
and $\ell\neq 2$, the conditions mean that $G_K^a$ modulo $\ell$ is liftable, i.e., 
$G_K^c=G_K^a$. Thus there exists a valuation with  abelianized
inertia subgroup  (modulo $\ell$) of corank at most one, by Corollary~\ref{coro:ss}.  
The third assumption distinguishes the two cases, when the corank is zero versus one. 
In the latter case, the residue field $\KK_{\nu}$ has nontrivial $\ell$-extensions, 
hence satisfies $\KK_{\nu}^{\times}\neq (\KK_{\nu}^{\times})^{\ell}$.

\begin{theo}[Engler--K\"onigsmann \cite{engler-koe}/Engler--Nogueira, 
$\ell=2$ \cite{engler-nogu}]
Let $K$ be a field of characteristic $\neq \ell$ 
containing the roots of unity of order $\ell$.  
Then $K$ admits an $\ell$-Henselian 
valuation $\nu$ (i.e., $\nu$ extends uniquely
to the maximal Galois $\ell$-extension of $K$) 
with $\char(\KK_{\nu})\neq \ell$ 
and non-$\ell$-divisible $\Gamma_{\nu}$ if and only if
$\G_K$ is noncyclic and contains a nontrivial normal abelian subgroup. 
\end{theo}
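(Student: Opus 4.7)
The plan is to prove the two implications separately. The forward direction is essentially formal, while the converse is the substance of the theorem and relies on Theorem~\ref{thm:sigma}.

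For the forward direction, suppose $\nu$ satisfies the three listed conditions. The $\ell$-Henselian property means that $\nu$ extends uniquely to the maximal pro-$\ell$-extension $K(\ell)/K$, so once a prolongation is fixed the decomposition subgroup equals all of $\G_K$; consequently the inertia $I_\nu\subset\G_K$ is normal. Since $\char(\KK_\nu)\neq \ell$, the pro-$\ell$-inertia is tame and canonically a quotient of $\Hom(\Gamma_\nu,\Z_\ell(1))$, hence abelian. Non-$\ell$-divisibility of $\Gamma_\nu$ ensures $\Hom(\Gamma_\nu,\Z_\ell(1))\neq 0$, so $I_\nu$ is nontrivial, supplying the required normal abelian subgroup. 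For noncyclicity of $\G_K$, I would combine the inertia contribution with the residue contribution: since $\zeta_\ell\in K$ and the residue characteristic differs from $\ell$, one has $\zeta_\ell\in \KK_\nu$, and a case analysis on whether $\G_{\KK_\nu}$ is trivial or not, together with the $\Z_\ell$-rank of $\Gamma_\nu\otimes\Z_\ell$, produces at least two $\Z_\ell$-independent classes in $\G^a_K$ via Kummer duality.

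For the converse, suppose $\G_K$ is noncyclic and contains a nontrivial normal abelian subgroup $N$. The strategy is to extract a commuting pair from $N$, feed it into Theorem~\ref{thm:sigma}, and exploit normality of $N$ to promote the resulting valuation to an $\ell$-Henselian one. The standard fact that any nontrivial normal closed subgroup of a pro-$\ell$-group meets its center nontrivially guarantees that $N\cap Z(\G_K)$ is nontrivial, and noncyclicity of $\G_K$ rules out the degeneration in which the entire intersection lies inside $[\G_K,\G_K]$; so pick $n\in N\cap Z(\G_K)$ with nonzero image $\bar n\in \G^a_K$, and then choose $g\in\G_K$ with $\bar g\in\G^a_K$ $\Z_\ell$-independent from $\bar n$. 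Since $n$ is central, $\bar n$ and $\bar g$ lift to commuting elements in the class-$2$ quotient $\G^c_K$, and by Kummer duality they correspond to nonproportional homomorphisms $\gamma,\gamma'\colon K^\times/k^\times\to\Z_\ell$ satisfying hypothesis (2) of Theorem~\ref{thm:sigma}.

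Theorem~\ref{thm:sigma} then produces a valuation $\nu$ of $K$ with an inertia element in the $\Z_\ell$-span of $\bar n,\bar g$. Tameness, i.e.\ $\char(\KK_\nu)\neq \ell$, is automatic because any wild component of inertia at residue characteristic $\ell$ would have trivial image in $\G^a_K$, while non-$\ell$-divisibility of $\Gamma_\nu$ is immediate from the nontriviality of the constructed inertia element. I expect the final step, promoting $\nu$ to an $\ell$-Henselian valuation, to be the main obstacle: one must show that the decomposition subgroup exhausts $\G_K$, equivalently that $\nu$ admits a unique prolongation to $K(\ell)$. Here the normality of $N$ is essential: for any $h\in\G_K$ the conjugate valuation $\nu^h$ has inertia $hI_\nu h^{-1}$, but the intrinsic characterization of $I_\nu$ (as the maximal subgroup on which the flag map built in the proof of Theorem~\ref{thm:sigma} vanishes) together with the $\G_K$-invariance of the $\Z_\ell$-span of $\bar n,\bar g\in\G^a_K$ forces $I_{\nu^h}=I_\nu$. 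This rigidity of the inertia subgroup under conjugation gives uniqueness of the prolongation, hence $\ell$-Henselianness, completing the argument.
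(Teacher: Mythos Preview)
The paper does not prove this theorem; it is quoted from \cite{engler-koe} and \cite{engler-nogu}. The paragraph that follows it only explains how, \emph{under the additional hypothesis that $K$ contains an algebraically closed subfield $k$}, the converse direction relates to Theorem~\ref{thm:sigma} and Corollary~\ref{coro:ss}: a nontrivial normal abelian subgroup forces a nontrivial center modulo $\ell^n$, whence a valuation $\nu$ with $\G_K=\D_\nu$. Your proposal aims for the theorem in the generality stated, which is strictly more than the paper attempts.

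There are genuine gaps. First, Theorem~\ref{thm:sigma} requires a subfield $k\subset K$ with $\#k\ge 11$; the hypotheses here (characteristic $\neq\ell$ and $\ell$-th roots of unity in $K$) do not supply such a $k$ for small $\ell$, and the paper's own discussion adds the algebraically-closed-subfield assumption precisely to bridge this. Second, and more seriously, your $\ell$-Henselianness argument does not work: you invoke the ``$\G_K$-invariance of the $\Z_\ell$-span of $\bar n,\bar g\in\G^a_K$'', but conjugation acts trivially on the abelianization, so \emph{every} subset of $\G^a_K$ is invariant and this cannot separate $\nu$ from a conjugate $\nu^h$. Even granting $\I^a_{\nu^h}=\I^a_\nu$, distinct valuations can share abelianized inertia, so equality of prolongations does not follow. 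The proofs in \cite{engler-koe}, \cite{engler-nogu} rely on valuation-theoretic machinery (comparability and coarsenings of $\ell$-Henselian valuations) that is not visible at the level of $\G^a_K$ or $\G^c_K$ alone, and your argument does not replace it.
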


Again, under the assumption that $K$ contains an algebraically closed field $k$, of characteristic $\neq \ell$, 
we can directly relate this result to our Theorem~\ref{thm:sigma} and 
Corollary~\ref{coro:ss} as follows:
The presence of an abelian normal subgroup in $\G_K$ means that modulo $\ell^n$ 
there is a nontrivial center. Thus there is a valuation $\nu$ such that 
$\G_K = \D_{\nu}$, the corresponding decomposition group. 
Note that the inertia subgroup $\I_{\nu}\subset \G_K$ maps injectively into $\I^a_{\nu}$.

\

We now sketch the proof of Theorem~\ref{thm:sigma}. 
Reformulating the claim, we see that the goal is to produce a 
{\em flag map} on $\P_k(K)$. Such a map $\iota$ 
jumps only on projective subspaces of $\P_k(K)$, i.e., 
every finite dimensional projective space $\P^n\subset \P_k(K)$ 
should admit a flag by projective subspaces
$$
\P^n\supset \P^{n-1}\supset ...
$$
such that $\iota$ is constant on $\P^r(k)\setminus \P^{r-1}(k)$, for all $r$. 
Indeed, a flag map defines a partial order on $K^{\times}$ which is preserved under shifts by
multiplication in $K^{\times}/k^{\times}$, hence a scale of $k$-subspaces
parametrized by some ordered abelian group $\Gamma$.

\

We proceed by contradiction. Assuming that the $R$-span 
$\sigma:=\langle \gamma,\gamma'\rangle$ does not contain 
a flag map we find a distinguished $\P^2\subset\P_k(K)$ such that 
$\sigma$ contains no maps which would be flag maps on this $\P^2$
(this uses that $\#k\ge 11$).  
To simplify the exposition, assume now that $k=\F_p$.

\

{\em Step 1.}
If $p>3$ then 
$\alpha \,:\, \mathbb P^2(\F_p)\ra R$ is a flag map
iff the restriction to \emph{every} $\mathbb P^1(\mathbb F_p)\subset \mathbb P^2(\mathbb F_p)$ is
a flag map, i.e., constant on the complement of one point.    

\

A counterexample for $p=2$ and $R=\Z/2$ is provided by the Fano plane:

\centerline{\includegraphics[width=.5\textwidth]{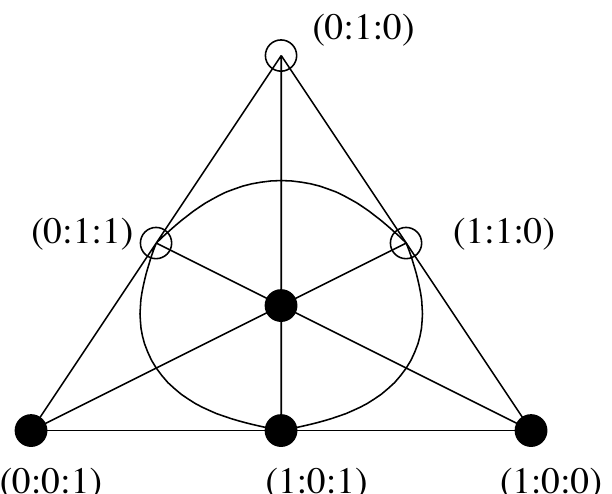}}

\

{\em Step 2.}
On the other hand, assumptions (1) and (2) imply that the map
$$
\begin{array}{ccc}
K^{\times}/k^{\times} = \mathbb P_k(K) &  \stackrel{\varphi}{\lra} &  \mathbb A^2(R) \\
      f & \mapsto &   (\gamma(f), \gamma'(f))
\end{array}
$$
maps every \emph{projective line} into an \emph{affine} line, a collineation. 
This imposes strong conditions on $\varphi=\varphi_{\gamma,\gamma'}$ and both $\gamma, \gamma'$. 
For example, for all $\P^2\subset \P_k(K)$ the image $\varphi(\P^2)$  
is contained in a union of an affine line and at most 
one extra point in $\A^2(R)$.

\

{\em Step 3.}
At this stage we are working with maps
$$
\P^2(\F_p) \ra \A^2(R), 
$$
preserving the geometries as above. Using Step 2 we may even reduce to considerations
of maps with image consisting of 3 points:
$$
\P^2(\F_p)\ra \{ \bullet, \circ, \star\} 
$$
and such that every line $\P^1(\F_p)\subset \P^2(\F_p)$ is mapped to exactly two points. 
Projective/affine geometry considerations produce a \emph{flag map} in 
the $R$-linear span of $\gamma,\gamma'$, contradicting the assumption.

\

The case of $\char(K)=0$ is more complicated (see \cite{BT}).

\


\section{Pro-$\ell$-geometry}
\label{sect:pro-ell}

One of the main advantages in working with function fields $K$ 
as opposed to arbitrary fields is the existence of normal {\em models}, i.e., algebraic varieties $X$ with 
$K=k(X)$, and a {\em divisor} theory on $X$. 
Divisors on these models give rise to 
a rich supply of valuations of $K$, and we can employ geometric considerations  
in the study of relations between them.

We now assume that $k=\bar{\F}_p$, with $p\neq \ell$. 
Let $\Div(X)$ be the group of (locally principal) Weil divisors of $X$ and $\Pic(X)$ the 
Picard group. The exact sequence 
\begin{equation}
\label{eqn:seqq}
0\ra K^{\times}/k^{\times}\stackrel{\dv_X}{\lra} \Div(X)\stackrel{\pic}{\lra} \Pic(X)\ra 0, 
\end{equation}
allows us to connect functions $f\in K^{\times}$ to divisorial valuations, realized by 
irreducible divisors on $X$. 

We need to work simultaneously with two functors on $\Z$-modules of possibly infinite rank:
$$
M\mapsto M_{\ell}:=M\otimes \Z_{\ell} \quad \text{ and } M\mapsto \hat{M} := \lim_{\longleftarrow} M\otimes \Z/\ell^n. 
$$ 
Some difficulties arise from the fact that these are ``the same'' at each finite level, $\pmod{\ell^n}$.
We now recall these issues 
for functions, divisors, and Picard groups of normal projective models
of function fields (see \cite[Section 11]{bt0} for more details).  

Equation~\eqref{eqn:seqq} gives rise to an exact sequence
\begin{equation}
\label{eqn:seqq-times}
0\ra K^{\times}/k^{\times}\otimes \Z_{\ell}\stackrel{\dv_{X}}{\lra} \Div^0(X)_{\ell}\stackrel{\pic_{\ell}}{\lra} 
\Pic^0(X)\{\ell\}\ra 0. 
\end{equation}
where 
$$
\Pic^0(X)\{\ell\}=\Pic^0(X)\otimes \Z_{\ell}
$$ 
is the $\ell$-primary component of the torsion group of $k=\bar{\F}_p$-points of 
$\Pic^0(X)$, the algebraic group parametrizing 
classes of algebraically equivalent divisors modulo
rational equivalence.   
Put
$$
\cT_{\ell}(X):=\lim_{\longleftarrow}{\rm Tor}_1(\Z/\ell^n,\Pic^0(X)\{\ell\}).
$$ 
We have $\cT_{\ell}(X)\simeq \Z_{\ell}^{2\mathsf g}$, 
where $\mathsf g$ is the dimension of $\Pic^0(X)$.
In fact, $\cT_{\ell}$ is a contravariant functor, 
which stabilizes on some normal projective model $X$, 
i.e., $\cT_{\ell}(\tilde{X})=\cT_{\ell}(X)$ for all $\tilde{X}$ surjecting onto $X$. 
In the sequel, we will implicitly work with such $X$ and we write $\cT_{\ell}(K)$.


Passing to pro-$\ell$-completions in \eqref{eqn:seqq-times} we obtain an exact sequence:
\begin{equation}
\label{eqn:seqq-pro-ell}
0\ra \cT_{\ell}(K)\ra \hat{K}^{\times}\stackrel{\dv_X}{\lra} \widehat{\Div^0}(X) \lra 0, 
\end{equation}
since $\Pic^0(X)$ is an $\ell$-divisible group. 
Note that all groups in this sequence are torsion-free. 
We have a diagram  

\centerline{
\xymatrix{ 
        & 0 \ar[r] & K^{\times}/k^{\times}\otimes\Z_{\ell}\ar[d]\ar[r]^{\dv_{X}} &\Div^0(X)_{\ell}\ar[d]\ar[r]^{\!\!\!\pic_{\ell}} & 
\Pic^0(X)\{\ell\} \ar[d] \ar[r] &  0  \\ 
0 \ar[r] &  \cT_{\ell}(K) \ar[r]  & \hat{K}^{\times} \ar[r]^{\dv_{X}} & \widehat{\Div^0}(X) \ar[r] & 0
}
}

\
Galois theory allows to ``reconstruct'' the second row of this diagram. The reconstruction of fields
requires the first row. The passage from the second to the first employs the theory of valuations.  
Every $\nu\in \DVal_K$ gives rise to a
homomorphism
$$
\nu\,:\, \hat{K}^{\times}\ra \Z_{\ell}.
$$
On a normal model $X$, where $\nu=\nu_D$ for some divisor $D\subset X$, 
$\nu(\hat{f})$ is the $\ell$-adic coefficient at $D$ of $\dv_X(\hat{f})$.
``Functions'', i.e., elements $f\in K^{\times}$, have {\em finite support} on 
models $X$ of $K$, i.e., only finitely many coefficients $\nu(f)$ are nonzero. 
However, the passage to blowups of $X$ introduces more and more divisors
(divisorial valuations) in the support of $f$. 
The strategy in \cite{bt0}, specific to dimension two,  
was to extract elements of $\hat{K}^{\times}$ 
with {\em intrinsically} finite support, using the interplay between
one-dimensional subfields $E\subset K$, i.e., projections of $X$ onto curves, 
and divisors of $X$, i.e., curves $C\subset X$.   
For example, Galois theory allows to distinguish valuations $\nu$ 
corresponding to {\em rational} and {\em nonrational} curves on $X$. 
If $X$ had only {\em finitely many} rational curves, 
then every blowup $\tilde{X}\ra X$ would have the same property. 
Thus elements $\hat{f}\in \hat{K}^{\times}$
with finite {\em nonrational} support, i.e., $\nu(f)=0$ for all but finitely many nonrational
$\nu$, have necessarily finite support on every model $X$ of $K$, 
and thus have a chance of being functions. A different geometric argument applies
when $X$ admits a fibration over a curve of genus $\ge 1$,  with rational generic fiber. 
The most difficult case to treat, surprisingly, is the case of rational surfaces. 
See Section 12 of \cite{bt0} for more details.

\

The proof of Theorem~\ref{thm:0} 
in \cite{bt1} reduces to dimension two, via Lefschetz pencils.

\section{Pro-$\ell$-$\rK$-theory}
\label{sect:wishful}

Let $k$ be an algebraically closed field of characteristic $\neq \ell$ 
and $X$ a smooth projective variety over $k$, with function field $K=k(X)$.
A natural generalization of \eqref{eqn:seqq} is the Gersten sequence 
(see, e.g., \cite{suslin-ob}): 
$$
0\ra \rK_2(X)\ra \rK_2(K)\ra \bigoplus_{x\in X_1} \rK_1(k(x))\ra \bigoplus_{x\in X_2} \Z \ra \CH^2(X) \ra 0, 
$$
where $X_d$ is the set of points of $X$ of codimension $d$ and $\CH^2(X)$ is the second Chow group of $X$.  
Applying the functor 
$$
M\mapsto M^{\vee}:=\Hom(M,\Z_{\ell})
$$ 
and using the duality
$$
\G^a_K = \Hom(K^{\times},\Z_{\ell}) 
$$
we obtain a sequence

\

\centerline{
\xymatrix{
\rK_2(X)^{\vee} & \rK_2(K)^{\vee} \ar[l] & \ar[l] \prod_{D\subset X} \G^a_{k(D)} 
 }  
}

\

\noindent
Dualizing the sequence
$$
0\ra I_K \ra \wedge^2(K^{\times}) \ra \rK_2(K)\ra 0
$$
we obtain
$$
I_K^{\vee} \leftarrow \wedge^2(\G_K^a)  \leftarrow    \rK_2(K)^{\vee}  \leftarrow 0 
$$
On the other hand, we have the following exact sequences:
$$
0\ra Z_K\ra \G^c_K\ra \G^a_K \ra 0
$$
and the resolution of $Z_K=\left[\G^c_K,\G^c_K\right]$ 
$$
0\ra \rR(K)\ra \wedge^2(\G^a_K) \ra Z_K\ra 0.
$$

Recall that  $\G^a_K = \Hom(K^\times/k^\times,\Z_\ell)$ 
is a torsion-free $\Z_\ell$-module, with topology induced from 
the discrete topology on 
$K^\times/k^\times$. 
Thus any primitive finitely-generated subgroup 
$A\subset K^\times/k^\times$ is a direct summand and 
defines a continuous surjection 
$\G^a_K\ra \Hom(A,\Z_\ell)$.
The above topology on $\G^a_K$ defines a natural topology on 
$\wedge^2(\G^a_K)$.
On the other hand, we have  a topological profinite group
$\G^c_K$ with topology induced by finite  $\ell$-extensions of $K$, 
which contains a closed abelian subgroup $Z_K= [\G^c_K,\G^c_K]$.

\begin{prop} \cite{B-1}
\label{prop:rr}
We have
$$
\rR(K) = (\Hom(\rK_2(K)/{\rm Image}(k^\times\otimes K^\times), \Z_\ell)=\rK_2(K)^{\vee}.
$$
\end{prop}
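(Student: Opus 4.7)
The plan is to combine Kummer duality for $\G^a_K$, the Hochschild--Serre spectral sequence of the central extension
$$1 \to Z_K \to \G^c_K \to \G^a_K \to 1,$$
and the degree-two Bloch--Kato conjecture (Merkurjev--Suslin). Setting $L := K^\times/k^\times$, Theorem~\ref{thm:ga} identifies $\G^a_K = \Hom(L,\Z_\ell)$; in the continuous pro-$\ell$ sense this upgrades to $\wedge^2\G^a_K = \Hom(\wedge^2 L,\Z_\ell)$, and the commutator map $c:\wedge^2\G^a_K\to Z_K$ admits a dual
$$c^\vee:\Hom(Z_K,\Z_\ell)\hookrightarrow \wedge^2 L\otimes \Z_\ell.$$

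The first task is to show that the image of $c^\vee$ equals the Steinberg ideal $I_K$. Applying Hochschild--Serre with $\Z/\ell^n$-coefficients: since $Z_K = [\G^c_K,\G^c_K]$, the abelianization of $\G^c_K$ is $\G^a_K$, so the edge map $\rH^1(\G^a_K,\Z/\ell^n)\to\rH^1(\G^c_K,\Z/\ell^n)$ is an isomorphism and the five-term sequence yields an injective transgression
$$d_2:\Hom(Z_K,\Z/\ell^n)\hookrightarrow \rH^2(\G^a_K,\Z/\ell^n)=\wedge^2(L/\ell^n),$$
whose image equals $\Ker(\pi_a^*)$. A direct inspection of the classifying $2$-cocycle for $\G^c_K\to\G^a_K$ identifies $d_2\pmod{\ell^n}$ with $c^\vee\pmod{\ell^n}$. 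By Theorem~\ref{thm:bkk}(2) this kernel is exactly the Steinberg ideal $I_K(n)$, and passing to the inverse limit over $n$ produces $\mathrm{Image}(c^\vee) = I_K$.

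It remains to dualize the exact sequence $0 \to \rR(K)\to\wedge^2\G^a_K\to Z_K\to 0$. Under the identification $\wedge^2\G^a_K = \Hom(\wedge^2 L,\Z_\ell)$, the subgroup $\rR(K)$ is precisely the annihilator of $\mathrm{Image}(c^\vee) = I_K$, so
$$\rR(K) = \Hom(\wedge^2 L / I_K,\Z_\ell).$$
By Merkurjev--Suslin (together with Matsumoto's theorem $\rK_2^M = \rK_2$ for fields) we have $\wedge^2 L/I_K = \rK_2(K)/\mathrm{Image}(k^\times\otimes K^\times)$, where symbols involving $k^\times$ vanish already in $\wedge^2 L$ since $k^\times$ has been quotiented out. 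This yields the asserted identification $\rR(K) = \rK_2(K)^\vee$.

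The main obstacle is the compatibility assertion that the transgression $d_2$ is dual to the commutator pairing $c$; this is the classical description of the extension class of a central extension as its commutator cocycle, but it must be carefully verified in the pro-$\ell$ topological setting. A secondary (technical) difficulty lies in the passage between discrete and profinite dualities in the inverse limit: the relevant $\mathrm{Ext}^1$-obstructions vanish thanks to the torsion-freeness of $\G^a_K$ as a $\Z_\ell$-module and the $\ell$-divisibility of $k^\times$, legitimizing the identification of $\rR(K)$ with a genuine $\Z_\ell$-dual rather than merely a continuous dual.
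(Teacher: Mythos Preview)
Your proof plan is correct and uses the same core ingredients as the paper's argument: the identification of the commutator map on $\wedge^2(\G^a_K)$ with the transgression in $\rH^2$, and Merkurjev--Suslin to match the image with the Steinberg ideal. The paper's proof establishes the two inclusions $\rR(K)\subseteq\rK_2(K)^\vee$ and $\rK_2(K)^\vee\subseteq\rR(K)$ more informally, passing through a basis of finite quotients $G_i^c$ of $\G^c_K$ (with $Z(G_i^c)=[G_i^c,G_i^c]$) rather than working directly with the pro-$\ell$ Hochschild--Serre five-term sequence; but the underlying mechanism is the same. Your packaging via the five-term exact sequence and Theorem~\ref{thm:bkk}(2) is cleaner and makes the role of Merkurjev--Suslin more explicit, while the paper's version is more hands-on and avoids naming the spectral sequence. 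The technical caveats you flag---compatibility of $d_2$ with the commutator cocycle in the profinite setting, and the double-dual issues for infinite-rank $\Z_\ell$-modules---are genuine, and the paper's passage to finite quotients is one way of sidestepping them; your inverse-limit argument is an acceptable alternative provided the $\varprojlim^1$ vanishing is checked (which it is, since the relevant systems are Mittag-Leffler).
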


\begin{proof}
There is {\em continuous} surjective homomorphism 
$$
\begin{array}{rcl}
\wedge^2(\G^a_K) & \to  & Z_K \\
\gamma\wedge \gamma' & \mapsto & [\gamma,\gamma']
\end{array}
$$
The kernel $\rR(K)$ is a profinite group with the induced topology.
Any $r\in \rR(K)$ is trivial on symbols $(x,1-x) \in \wedge^2(K^\times/k^{\times})$ 
(since the corresponding elements are trivial in $\rH^2(\G^a_K,\Z/\ell^n)$, for all $n\in \N$). 
Thus  $\rR(K)\subseteq \rK_2(K)^{\vee}$.

Conversely, let $\alpha \in \rK_2(K)^{\vee}\setminus \rR(K)$; so that it projects nontrivially to 
$Z_K$, i.e., to a nontrivial 
element modulo $\ell^n$, for some $n\in \N$. 
Finite quotient groups of $\G^c_K$ 
with $Z(G_i^c)= [G_i^c, G_i^c]$ form a basis of topology on $\G^c_K$.  
The induced surjective homomorphisms $\G^a_K\to G_i^{a}$ define
surjections $\wedge^2(\G^a_K)\to [G_i,G_i]$ and 
$$
\rR(K)\to \rR_i:=\Ker(\wedge^2(G_i^{a})\to [G_i,G_i]).
$$
Fix a $G_i$ such that $\alpha$ is nontrivial of $G_i^c$.
Then the element $\alpha$ is nonzero in the image of 
$\rH^2(G_i^a,\Z/\ell^n) \ra \rH^2(G_i^c,\Z/\ell^n)$.
But this is incompatible with 
relations in $\rK_2(K)$, modulo $\ell^n$.   
\end{proof}

It follows that $\rR(K)$ contains a 
distinguished $\Z_{\ell}$-submodule
\begin{equation}
\label{eqn:wedge}
\rR_{\wedge}(K) = \text{ Image of }   \prod_{D\subset X} \G^a_{k(D)}
\end{equation}
and that 
$$
\rK_{2}(X)^{\vee}\supseteq \rR(K)/\rR_{\wedge}(K).
$$
In general, let 
$$
\rK_{2,nr}(K) = \Ker( \rK_2(K)\ra \bigoplus_{\nu\in \DVal_K} \KK_{\nu}^{\times})
$$  
be the {\em unramified} $\rK_2$-group.  
Combining Proposition~\ref{prop:rr} and 
\eqref{eqn:wedge}, we find that
$$
\widehat{\rK_{2,nr}}(K) \subseteq \Hom( \rR(K)/\rR_{\wedge}(K), \Z_{\ell}). 
$$
This sheds light on the connection between relations in $\G^c_K$
and the $\rK$-theory of the field, more precisely, the unramified Brauer group of $K$. 
This in turn helps to reconstruct multiplicative groups of 1-dimensional subfields of $K$. 

\

We now sketch a closely related, alternative strategy for the reconstruction of these subgroups
of $\hat{K}^{\times}$ from Galois-theoretic data. 
We have a diagram

\

\centerline{
\xymatrix{ 
0\ar[r] & \G_K^c \ar[d] \ar[r]  & \prod_E \G_K^c \ar[d] \ar[r]^{\rho^c_E}  & \G^c_E  \ar[d]\\
0\ar[r] & \G_K^a        \ar[r]  & \prod_E \G_K^a         \ar[r]^{\rho^a_E} & \G^a_E  
}
}

\

\noindent
where the product  is taken over all normally closed 1-dimensional subfields $E\subset K$, 
equipped with the direct product topology, and 
the horizontal maps are closed embeddings.
Note that $\G^a_K$ is a primitive subgroup
given by equations 
$$
\G^a_K = \{ \gamma \,\, \mid  \,\, (xy)(\gamma) - (x)(\gamma)- (y)(\gamma) = 0 \}  \subset \prod_E \G^a_{E}
$$
where  $x,y$ are algebraically independent in $K$
and   $xy,x,y\in K^{\times}$ are considered as functionals
on $\G^a_{k(xy)}, \G^a_{k(x)}, \G^a_{k(y)}$, respectively.
The central subgroup 
$$
Z_K\subset \G^c_K\subset \prod_E \wedge^2(\G^a_{E})
$$
is the image of $\wedge^2(\G^a_K)$ in $\prod_E \wedge^2(\G^a_{E})$.
Thus for any finite quotient $\ell$-group $G$ of $\G^c_K$ there
is an intermediate quotient which is a subgroup of finite
index in the product of free central extensions.
The following fundamental conjecture lies at the core
of our approach.

\begin{conj}
\label{conj:hope}
Let $K$ be a function field over $\bar{\F}_p$, with $p\neq \ell$,
$F^a$ a torsion-free topological $\Z_\ell$-module of
infinite rank. 
Assume that 
$$
\Psi_F^a : \G^a_K\to F^a
$$ 
is a continuous surjective 
homomorphism such that 
$$
\rk_{\Z_{\ell}}(\Psi_F^a(\sigma))\le 1
$$
for all liftable subgroups $\sigma \in \Sigma_K$. 
Then there exist a 1-dimensional subfield $E\subset K$, a subgroup $\tilde{F}^a\subset F^a$
of finite corank, and a diagram

\[
\centerline{
\xymatrix@!{
      & \G^a_K\ar[dl] \ar[dr] & \\
 \G^a_{E} \ar[rr]  &   &           \tilde{F}^a \\
}
}
\]
\end{conj}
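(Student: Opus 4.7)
The plan is to dualize $\Psi_F^a$ via Kummer theory, convert the rank condition on liftable subgroups into algebraic dependence of elements of the multiplicative group, and then invoke the $\rK$-theoretic reconstruction results of Section~\ref{sect:k-theory} to produce a one-dimensional subfield. Kummer duality against $\Z_\ell(1)$ turns the continuous surjection $\Psi_F^a\colon \G^a_K \twoheadrightarrow F^a$ into a continuous closed embedding
$$
\Psi_F^* \colon (F^a)^\vee \hookrightarrow \hat{K}^\times
$$
whose image $\hat M$ maps, modulo the finite-rank torsion $\cT_\ell(K)$, into $K^\times/k^\times \otimes \Z_\ell$. The hypothesis $\rk_{\Z_\ell}\Psi_F^a(\sigma) \le 1$ for every $\sigma \in \Sigma_K$ amounts to saying that $\wedge^2(\Psi_F^a)$ annihilates $\rR_\wedge(\G^c_K)\subset \wedge^2(\G^a_K)$. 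Dually, in view of Proposition~\ref{prop:rr}, this forces every symbol $\{\hat f,\hat g\}$ with $\hat f,\hat g \in \hat M$ to be infinitely divisible in the pro-$\ell$-completed Milnor group $\rK_2^M(K)$.

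The second step is to apply the pro-$\ell$ analog of the $\rK$-theoretic characterization of algebraic dependence from Section~\ref{sect:k-theory}: a pair of elements of $K^\times/k^\times$ whose Milnor symbol is infinitely divisible must lie in a common one-dimensional subfield, a consequence of Proposition~\ref{prop:funct}. Applied pairwise to $\hat M$, this produces for each $\hat f,\hat g \in \hat M$ a normally closed $E_{f,g}\subset K$ of transcendence degree one with $\hat f,\hat g\in \widehat{E_{f,g}^\times}$. The third and most delicate step is to globalize these pairwise witnesses into a single one-dimensional $E$ common to almost all of $\hat M$. Fixing a non-constant $\hat f_0 \in \hat M$ and varying $\hat g$, the rigidity part of Proposition~\ref{prop:funct} (the explicit form $p\in k^\times\cdot (x^a-c_1)^{1/a}$) constrains any two subfields $E_{f_0,g}$ and $E_{f_0,g'}$ either to coincide or to force $\hat g,\hat g'$ into an essentially rational one-parameter configuration. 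Using that $F^a$ has infinite rank and discarding a subgroup of finite corank $\tilde F^a \subset F^a$ absorbs these exceptional configurations and leaves a single $E$ with $\Psi_F^*((\tilde F^a)^\vee) \subset \widehat{E^\times}$.

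Dualizing back via Kummer theory, the inclusion $\widehat{E^\times/l^\times} \hookrightarrow \widehat{K^\times/k^\times}$ corresponds to the canonical restriction surjection $\G^a_K \twoheadrightarrow \G^a_E$, through which the induced map $\G^a_K \to \tilde F^a$ factors, yielding the desired commutative triangle. The hard part---and the reason the statement is only conjectural in full generality---is the globalization of Step three. At finite level the analogous assembly in \cite{bt-milnor} depended on the precise functional equation of Proposition~\ref{prop:funct}, proved by differential calculus in the function field. Transporting this to the pro-$\ell$ setting must contend with the two pro-$\ell$ pathologies of Section~\ref{sect:pro-ell}: the torsion correction $\cT_\ell(K)$, and the distinction between functions of finite support in $K^\times/k^\times \otimes \Z_\ell$ and elements of the larger completion $\hat K^\times$ whose support only converges $\ell$-adically to zero. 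The flexibility allowed by passing from $F^a$ to $\tilde F^a$ of finite corank is precisely the device that should absorb these defects, and the expectation is that this strategy becomes unconditional once combined with the projective-model arguments of \cite{bt0}, \cite{bt1} that separate rational from non-rational divisorial valuations.
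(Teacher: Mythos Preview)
The statement you are attempting to prove is labeled \textbf{Conjecture}~\ref{conj:hope} in the paper, and the paper does \emph{not} provide a proof. Immediately after the statement the authors write only that ``we can prove the conjecture under some additional geometric assumptions,'' without specifying them, and that ``assuming the conjecture, the proofs in \cite{bt0}, \cite{bt1} would become much more straightforward.'' The only heuristic offered is that such a $\Psi_F^a$ ought to lift to a surjection $\G^c_K \to F^c$ onto a \emph{free} central extension of $F^a$, which would then force a factorization through some $\G^c_E$ via the embedding $\G^c_K \hookrightarrow \prod_E \G^c_E$ discussed just before the conjecture.

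Your outline is therefore not to be measured against a proof in the paper --- there is none --- but it is worth noting where your strategy sits relative to the authors' stated intuition. You work entirely on the dual (Kummer/$\rK$-theory) side, translating the rank-$\le 1$ condition on liftable subgroups into infinite divisibility of symbols and then invoking the functional-equation machinery around Proposition~\ref{prop:funct}. The paper's heuristic stays on the Galois side. These are dual viewpoints, and both run into the same obstruction, which you correctly name as the ``globalization'' in your Step~3: pairwise algebraic dependence (or its pro-$\ell$ avatar) does not by itself force a common one-dimensional subfield, and the pro-$\ell$ pathologies --- $\cT_\ell(K)$ and the gap between $K^\times/k^\times \otimes \Z_\ell$ and $\hat K^\times$ --- are exactly what the geometric model-by-model arguments of \cite{bt0}, \cite{bt1} were designed to circumvent in the absence of the conjecture. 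So your honest acknowledgment that Step~3 is the gap is consistent with the paper's own position.

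One technical caution on Step~1: the passage from ``$\wedge^2(\Psi_F^a)$ kills $\rR_\wedge(\G^c_K)$'' to ``every symbol $\{\hat f,\hat g\}$ with $\hat f,\hat g \in \hat M$ is infinitely divisible in $\rK_2^M(K)$'' is not immediate from Proposition~\ref{prop:rr}, which identifies $\rR(K)$ (not $\rR_\wedge(K)$) with $\rK_2(K)^\vee$. What you actually need is the description of $\rR_\wedge(K)$ via inertia of divisorial valuations in \eqref{eqn:wedge} together with the characterization in Section~\ref{sect:k-theory} that $(f,g)$ is infinitely divisible iff $f,g$ are algebraically dependent; and even then that characterization is stated for genuine elements of $K^\times/k^\times$, so extending it to arbitrary $\hat f \in \hat K^\times$ is already part of the same completion problem you flag later, not a separate step you can take for granted.
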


We expect that $\tilde{F}_a=F_a$, when $\pi_1(X)$ is finite.   
Note that there can exist at most one normally closed subfield 
$E\subset F$ satisfying this property. 

\

The intuition behind this conjecture is that such maps should arise from 
surjective homomorphisms onto free central extensions, i.e., 
we should be able to factor as follows:
$$
\Psi_F^c= \G_K^c\stackrel{\rho^c_F}{\lra} \G^c_F\ra F^c
$$
where $F^c$ is a free central extension of $F^a$:
$$
0\ra \wedge^2(F^a)\ra F^c\ra F^a\ra 0.
$$
We can prove the conjecture under some additional geometric assumptions.
Assuming the conjecture, the proofs in \cite{bt0}, \cite{bt1} would
become much more straightforward.
Indeed, consider the diagram

\

\centerline{
\xymatrix{ 
\G^a_K \ar[r]^{\sim} &  \G^a_L \ar[d]  \\
              &  \G^a_F
}
}

\

\noindent
Applying Conjecture~\ref{conj:hope} we find a unique
normally closed subfield $E\subset K$ and  a canonical isomorphism
$$
\Psi\colon \G^a_E\ra \G^a_F, \quad F\subset L, 
$$
Moreover, this map gives a bijection between the set of 
inertia subgroups of divisorial valuations on $E$ and of $F$;
these are the images of inertia subgroups of divisorial valuations on $K$ and $L$.  
At this stage, the simple rationality argument (see \cite[Proposition 13.1 and Corollary 15.6]{bt0})  
implies that 
$$
\Psi^* \colon \hat{L}^{\times} \stackrel{\sim}{\lra} \hat{K}^{\times}
$$
induces an isomorphism
$$
L^{\times} /l^{\times} \otimes \Z_{(\ell)} \stackrel{\sim}{\lra}  
\epsilon \left( K^{\times} /k^{\times} \otimes \Z_{(\ell)}\right),
$$
for some $\epsilon \in \Z_{\ell}^{\times}$, 
respecting multiplicative subgroups of 1-dimensional subfields. Moreover,
for each 1-dimensional rational subfield $l(y)\subset L$ we obtain
$$
\Psi^*(l(y)^{\times}/l^{\times})=  \epsilon \cdot \epsilon_y \cdot \left(k(x)^{\times}/k^{\times}\right)
$$
for some $\epsilon_y\in \Q$. Proposition 2.13 in \cite{bt0} shows that 
this implies the existence of subfields  $\bar{L}$ and $\bar{K}$ such that 
$L/\bar{L}$ and $K/\bar{K}$ are purely inseparable extensions and  
such that $\epsilon^{-1}\cdot \Psi^*$ induces an {\em isomorphism} of multiplicative
groups 
$$
\P(\bar{L})=\bar{L}^\times /l^{\times} \stackrel{\sim}{\lra} \P(\bar{K})=\bar{K}^{\times} /k^{\times}.
$$
Moreover, this isomorphism maps lines $\P^1\subset \P(l(y))$ to lines $\P^1\subset \P(k(x))$. 
Arguments similar to those in Section~\ref{sect:k-theory} allow us to show that 
$\Psi^*$ induces an bijection of the sets of all {\em projective lines} of the projective structures. 
The Fundamental theorem of projective geometry (Theorem~\ref{thm:recon-fields}) 
allows to match the additive structures and leads
to an isomorphism of fields.

The proof of Theorem~\ref{thm:0} in \cite{bt0} is given for the case
of the fields of transcendence degree two.
However, the general case immediately follows by applying 
Theorem~\ref{thm:recon-fields}
from Section~\ref{sect:projective} (or \cite{bt1}).
Indeed, it suffices to show that for all $x,y\in L^\times/l^\times$
$$
\Psi^*(l(x,y)^\times/l^\times)\subset 
\overline{k(x,y)}^\times/k^\times \otimes \Z_{(\ell)}
\subset K^\times/k^{\times} \otimes \Z_{(\ell)}.
$$

Note that the groups $\overline{l(x)}^\times/l^\times$ map into
subgroups $\overline{k(x)}^\times/k^\times\times \Z_{(\ell)}$ 
since 
$\Psi^*$ satisfies the conditions of \cite[Lemma 26]{bt1}, i.e., 
the symbol 
$$
(\Psi^*(y),\Psi^*(z)) \in \rK^M_2(K)\otimes \Z_{\ell}
$$ 
is infinitely
$\ell$-divisible, for any $y,z \in  \overline{l(x)}^\times/l^\times$.  Thus 
$$
\Psi^*(\overline{l(x/y)}^\times) \in \overline{k(x,y)}^\times/k^\times \otimes \Z_{(\ell)}
$$
and similarly for
$\Psi^*(\overline{l(x+by)}^\times)/l^\times,   b\in l$, 
since by multiplicativity
$$
\Psi^*(\overline{l(x+y)}^\times/l^\times) 
\subset \cup_n (y^n \cdot \Psi^*(\overline{l(x/y+ b)}^\times/l^\times) =  
\cup_n (y^n \cdot \Psi^*(\overline{l(x/y)}^\times/l^\times)).
$$
Thus 
$$\Psi^*(x/y)/ l^{\times}, \Psi^*(x+y)/ l^\times\in
\overline{k(x,y)}^\times/k^\times \otimes \Z_{(\ell)},
$$ 
so that Theorem~\ref{thm:0}, for fields of arbitrary transcendence degree, follows from
the result for transcendence degree two.

\section{Group theory}
\label{sect:group}

Our intuition in Galois theory and Galois cohomology is based on 
the study of {\em finite} covers and {\em finite} groups. 
Our goal is to recover fields or some of their invariants 
from invariants of their absolute Galois groups and their
quotients.

In this section, we study some group-theoretic constructions 
which appear, in disguise, in the study of function fields. 
Let $G$ be a finite group. We have
$$
G^c=G/[[G,G],G], \quad G^a=G/[G,G].
$$
Let 
$$
\rB_0(G):=\Ker\left( \rH^2(G,\Q/\Z)\ra \prod_{B} \rH^2(B,\Q/\Z) \right)
$$
be the subgroup of those Schur multipliers which restrict trivially to all
bicyclic subgroups $B\subset G$.  
The first author conjectured in \cite{bog-stable} that 
$$
\rB_0(G)=0
$$
for all finite simple groups. 
Some special cases were proved in 
\cite{bog-unram}, and the general case was settled \cite{kun}.

In computations of this group it is useful to keep in mind the following diagram

\

\[ 
\centerline{
\xymatrix@C=15pt@R=20pt{
 \rB_0(G^c)\ar[d]                      & \rH^2(G^a)\ar@{>>}[d] 
& \rB_0(G) \ar[d] \\
 \rH^2(G^c)\ar[d] \ar@{=}[r]           & \rH^2(G^c) \ar[d]\ar[r]                & \rH^2(G) \ar[d]  \\
\prod_{B\subset G^c}\rH^2(B)\ar@{=}[r] &\prod_{B\subset G^c}\rH^2(B)\ar@{>>}[r] & \prod_{B\subset G} \rH^2(B). 
}
}
\] 

\

\noindent
Thus we have a homomorphism 
$$
\rB_0(G^c)\ra \rB_0(G). 
$$
We also have an isomorphism
$$
\Ker \left( \rH^2(G^a,\Q/\Z) \ra \rH^2(G,\Q/\Z)\right)= 
\Ker \left( \rH^2(G^a,\Q/\Z) \ra \rH^2(G^c,\Q/\Z)\right)
$$
Combining with the fact that
$\rB_0(G^c)$ is in the image of 
$$
\pi_a^* \colon \rH^2(G^a,\Q/\Z) \ra \rH^2(G,\Q/\Z)
$$
this implies that 
\begin{equation}
\label{eqan:bogg}
\rB_0(G^c)\hookrightarrow \rB_0(G). 
\end{equation}

Let $\ell$ be a prime number.
We write $G_{\ell}$ for the maximal $\ell$-quotient of $G$ and fix an $\ell$-Sylow subgroup
$\Syl_{\ell}(G)\subset G$, all considerations below are independent of the conjugacy class. 
We have a diagram

\

\centerline{
\xymatrix{
                         &   G       \ar@{>>}[d]  \ar@{>>}[r] & G^c\ar@{>>}[d]\ar@{>>}[r]  & G^a\ar@{>>}[d] \\
\Syl_\ell(G) \ar@{>>}[r] &   G_{\ell}\ar@{>>}[r]              & G^c_{\ell} \ar@{>>}[r]      & G^a_{\ell}   
}}

\

\noindent
Note that 
$$
G^c_{\ell}=\Syl_{\ell}(G^c), \quad \text{ and } \quad  G^a_{\ell}=\Syl_{\ell}(G^a),
$$
but that, in general, $\Syl_{\ell}(G)$ is much bigger than $G_{\ell}$.

\

We keep the same notation when working with pro-$\ell$-groups.

\

\begin{prop} 
\label{prop:sylly}
\cite{B-2}
Let $X$ be a projective algebraic variety of dimension $n$ over a field $k$. 
Assume that $X(k)$ contains a smooth point. Then 
$$
\Syl_{\ell}(G_{k(X)})=\Syl_{\ell}(G_{k(\P^n)}).
$$
\end{prop}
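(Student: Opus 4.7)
The plan is to use the smooth $k$-rational point $x \in X(k)$ to produce a common completion for $k(X)$ and $k(\mathbb{P}^n)$, and then to show this completion detects $\ell$-Sylow subgroups on both sides.

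First, smoothness at $x$ with residue field $k$ gives $\widehat{\cO}_{X,x} \cong k[[t_1,\dots,t_n]]$ by Cohen's structure theorem, and the same holds at any rational point $y$ of $\mathbb{P}^n$. Setting $F := \mathrm{Frac}(k[[t_1,\dots,t_n]])$, we get embeddings $k(X) \hookrightarrow F$ and $k(\mathbb{P}^n) \hookrightarrow F$. After compatible choices of separable closures, these yield continuous restriction maps
\[
\rho_X : G_F \lra G_{k(X)}, \qquad \rho_{\mathbb{P}^n} : G_F \lra G_{k(\mathbb{P}^n)},
\]
whose images are the decomposition subgroups $D_x \subset G_{k(X)}$ and $D_y \subset G_{k(\mathbb{P}^n)}$. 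By Artin approximation the Henselizations $\cO_{X,x}^{h}$ and $\cO_{\mathbb{P}^n,y}^{h}$ are isomorphic as abstract $k$-algebras, so the local Galois-theoretic data attached to $x$ and to $y$ is the same.

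The heart of the argument is to exhibit a pro-$\ell$ subgroup of this common data that is an $\ell$-Sylow on each side. The plan is to use a flag of nested smooth subvarieties through $x$ to extract divisorial valuations $\nu_1,\dots,\nu_n$ of $k(X)$ centered at $x$, each contributing a tame inertia factor isomorphic to $\Z_\ell(1)$. These inertia groups, combined with a Sylow of the Galois group of the residue field $k$, generate a pro-$\ell$ subgroup $S \subset G_{k(X)}$ which I expect to be of prime-to-$\ell$ index, hence to be an $\ell$-Sylow. Since the construction depends only on $(k,n)$ and the formal structure at $x$ (transported via the common $F$), the analogous flag at $y$ produces an abstractly isomorphic pro-$\ell$ subgroup of $G_{k(\mathbb{P}^n)}$, giving the desired identification.

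The main obstacle will be the prime-to-$\ell$ index claim: one must verify that every finite $\ell$-Galois extension $L/k(X)$ is realized in a degree-preserving way by the inertia data at $x$, so that no $\ell$-extensions are missed. The hard case is when $L$ is ramified along divisors far from $x$. The plan is to reduce, using birational modifications of projective models that do not alter $k(X)$ (nor its Sylow), to a situation where the ramification locus of $L$ can be arranged to meet the formal neighborhood of $x$; a Krasner-type argument inside $F$ then shows that the corresponding part of the ramification is captured by the chain of flag-inertia groups $\langle\nu_1,\dots,\nu_n\rangle$. Combining this with the identical calculation on the $\mathbb{P}^n$ side yields $\Syl_\ell(G_{k(X)}) = \Syl_\ell(G_{k(\mathbb{P}^n)})$.
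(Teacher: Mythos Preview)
Your approach has a genuine gap, and it is precisely the ``prime-to-$\ell$ index claim'' you flag as the main obstacle: that claim is \emph{false}, not merely hard. Take $k$ algebraically closed of characteristic $\neq \ell$ and let $X$ be any smooth projective variety with $\pi_1^{(\ell)}(X)\neq 1$ (e.g.\ an elliptic curve, or $E\times\P^{n-1}$ in higher dimension). A geometrically connected \'etale $\ell$-cover $\tilde X\to X$ yields an $\ell$-extension $L=k(\tilde X)$ of $k(X)$ that is unramified everywhere. Over the formal disk at your smooth point $x$, the ring $\widehat{\cO}_{X,x}\cong k[[t_1,\dots,t_n]]$ is strictly Henselian, so this \'etale cover splits completely: $L\otimes_{k(X)}F\cong F^{\ell}$. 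Hence the decomposition group $D_x$---and a fortiori any pro-$\ell$ group you build inside it from flag inertia---has index divisible by $\ell$ in $G_{k(X)}$ and cannot contain an $\ell$-Sylow. Your proposed fix via birational modifications does not help: such modifications leave $k(X)$ unchanged, and an everywhere-unramified cover of a smooth projective model remains everywhere unramified on any blow-up, so there is no ramification locus to ``move toward $x$''. The hard case is not ramification far from $x$; it is no ramification at all.

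The paper's proof is entirely different and much shorter: it is a global degree trick rather than a local comparison. One embeds $X$ birationally as a hypersurface of some degree $d'$ in $\P^{n+1}$, and then projects to $\P^n$ from two centers: a point $y\notin X$ (giving a map of degree $d'$) and the given smooth $k$-point $x\in X$ (giving a map of degree $d'-1$). Since $d'$ and $d'-1$ are coprime, one of them is prime to $\ell$; a finite extension of fields of degree prime to $\ell$ realizes $G_{k(X)}$ as a closed subgroup of prime-to-$\ell$ index in $G_{k(\P^n)}$, so their $\ell$-Sylow subgroups coincide. The smooth $k$-point enters only to guarantee that projection from $x$ drops the degree by exactly one.
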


\begin{proof}
First of all, let $X$ and $Y$ be algebraic varieties over a field $k$ 
with function fields $K=k(X)$, resp. $L=k(Y)$. 
Let $X\ra Y$ be a map of degree $d$ and 
$\ell$ a prime not dividing $d$ and $\char(k)$.
Then 
$$
\Syl_{\ell}(G_K)=\Syl_{\ell}(G_L).
$$
Let $X\ra \P^{n+1}$ be a birational embedding as a (singular) 
hypersurface of degree $d'$. 
Consider two projections onto $\P^n$: the first, $\pi_x$ 
from a smooth point $x$ in the image of $X$
and the second, $\pi_y$,  from a point $y$ in the complement of $X$ in $\P^{n+1}$.  
We have $\deg(\pi_y)=d'$ and $\deg(\pi_y) - \deg(\pi_x)=1$, 
in particular, one of these degrees is coprime to $\ell$. 
The proposition follows from the first step. 
\end{proof}

\begin{rema}
This shows that the full Galois group $G_K$ is, in some sense, {\em too large}:
the isomorphism classes of its $\ell$-Sylow subgroups depend only on the dimension and the 
ground field. We may write
$$
\Syl_{\ell}(G_K)=\Syl_{\ell,n,k}.
$$
In particular, they {\em do not} determine the function field.
However, the maximal pro-$\ell$-quotients do \cite{mochizuki}, \cite{pop}. 
Thus we have a surjection from a {\em universal} group, 
depending only on the dimension and ground field $k$, onto a 
{\em highly individual} group $\G^c_K$, 
which by Theorem~\ref{thm:0} determines the field $K$, 
for $k=\bar{\F}_p$, $\ell\neq p$, and $n\ge 2$.

The argument shows in particular
that the  group $\Syl_{\ell,k,n}$ belongs to the class of self-similar
groups. Namely any open subgroup of finite index in $\Syl_{\ell,k,n}$ 
is isomorphic to $\Syl_{\ell,k,n}$. The above construction provides with
isomorphisms parametrized by smooth $k$-points of $n$-dimensional
algebraic varieties. Note that the absence of smooth $k$-points
in $K$ may lead to a nonisomorphic group $\Syl_{\ell,k,n}$, as seen already in 
the example
of a conic $C$ over $k=\mathbb R$ with $C(\mathbb R)=\emptyset$ \cite{B-2}.
\end{rema}

\

\begin{theo}\cite[Thm. 13.2]{B-1} 
\label{thm:b0g}
Let $G_K$ be the Galois group of a function field $K=k(X)$ over 
an algebraically closed ground field $k$. 
Then, for all $\ell\neq \char(k)$ we have
$$
\rB_{0,\ell}(G_K)= \rB_0(\G^c_K).
$$
\end{theo}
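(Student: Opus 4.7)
The strategy is to identify both $\rB_{0,\ell}(G_K)$ and $\rB_0(\G^c_K)$ with the same subgroup of $\rH^2(\G^c_K, \Q_\ell/\Z_\ell)$, by combining Bloch--Kato with the theory of commuting pairs from Section~\ref{sect:valuations}. By Theorem~\ref{thm:bkk}, the composite
$$
\wedge^2(\G^a_K)^{\vee} = \rH^2(\G^a_K, \Q_\ell/\Z_\ell) \xrightarrow{\pi_a^*} \rH^2(\G^c_K, \Q_\ell/\Z_\ell) \lra \rH^2(G_K, \Q_\ell/\Z_\ell)
$$
is surjective and has the same kernel as $\pi_a^*$, so there is a canonical embedding $\rH^2(G_K, \Q_\ell/\Z_\ell) \hookrightarrow \rH^2(\G^c_K, \Q_\ell/\Z_\ell)$ identifying $\rH^2(G_K)$ with the image of $\pi_a^*$. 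For $\beta \in \rH^2(G_K, \Q_\ell/\Z_\ell)$ I write $\alpha^c$ for the corresponding class in $\rH^2(\G^c_K)$ and $\tilde\alpha \in \wedge^2(\G^a_K)^{\vee}$ for a further lift. The goal is then to match the vanishing condition defining $\rB_{0,\ell}(G_K)$ with the one defining $\rB_0(\G^c_K)$.

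One direction, a pro-$\ell$ incarnation of \eqref{eqan:bogg}, is immediate: any bicyclic $B = \langle g_1, g_2\rangle \subseteq G_K$ projects to a procyclic-or-bicyclic subgroup $\bar B^c \subseteq \G^c_K$ (since $g_1,g_2$ commute), and the compatibility square $\rH^2(\G^c_K) \to \rH^2(\bar B^c) \to \rH^2(B)$ shows that $\alpha^c|_{\bar B^c} = 0$ forces $\beta|_B = 0$. Hence $\rB_0(\G^c_K) \hookrightarrow \rB_{0,\ell}(G_K)$.

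For the reverse inclusion, fix a bicyclic $B^c = \langle \tilde\gamma_1,\tilde\gamma_2\rangle \subseteq \G^c_K$ with images $\gamma_1,\gamma_2 \in \G^a_K$. Since $\alpha^c = \pi_a^*(\tilde\alpha)$, the restriction $\alpha^c|_{B^c}$ is computed by the pairing $\tilde\alpha(\gamma_1 \wedge \gamma_2)$. If $\gamma_1\wedge\gamma_2 = 0$ in $\wedge^2\G^a_K$, this vanishes automatically. Otherwise, Theorem~\ref{thm:sigma} and Corollary~\ref{coro:ss} place the pair $(\gamma_1,\gamma_2)$ inside the abelianized decomposition group $\D^a_\nu$ of some divisorial valuation $\nu$. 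I would then produce commuting lifts $g_1, g_2 \in \D_\nu \subseteq G_K$, so that $B = \langle g_1,g_2\rangle$ is bicyclic in $G_K$ with abelianization $\langle\gamma_1,\gamma_2\rangle$; its image $\bar B^c \subseteq \G^c_K$ differs from $B^c$ only by translation by central elements of $Z_K$, which does not affect the restriction of a class pulled back from $\wedge^2 \G^a_K$. Consequently $\alpha^c|_{B^c} = \alpha^c|_{\bar B^c} = \beta|_B$, and this vanishes by hypothesis.

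The main obstacle is the production of these commuting lifts. For a divisorial $\nu$ on a function field over an algebraically closed $k$ of characteristic $\neq \ell$, the pro-$\ell$ decomposition group fits into a short exact sequence
$$
1 \to \I_\nu \to \D_\nu^{(\ell)} \to G^{(\ell)}_{\KK_\nu} \to 1,
$$
where $\I_\nu \simeq \Z_\ell(1)$ and the cyclotomic action of $G_{\KK_\nu}$ on $\I_\nu$ is trivial because $k\subseteq \KK_\nu$ already contains all $\ell^n$-th roots of unity; hence the sequence splits as $\D_\nu^{(\ell)} \simeq \Z_\ell \times G^{(\ell)}_{\KK_\nu}$. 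The commuting-lift problem therefore reduces to the same problem over the residue field $\KK_\nu$, whose transcendence degree over $k$ is strictly smaller. Induction on $\trdeg_k(K)$ closes the argument, the base case $\trdeg_k(K)=1$ being automatic since the pro-$\ell$-Galois group of a curve over an algebraically closed field of characteristic $\neq \ell$ is free pro-$\ell$, whose only two-generator abelian subgroups are procyclic. Non-divisorial liftable $\sigma \in \Sigma_K$ are absorbed into the induction by Corollary~\ref{coro:ss}, which up to a procyclic factor relates each such $\sigma$ to an $\I^a_\nu \subseteq \D^a_\nu$ for a divisorial $\nu$; verifying that this structural dictionary runs uniformly through the induction is where the principal technical work lies.
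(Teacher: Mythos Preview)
The paper does not give a proof of this statement; the theorem is quoted from \cite[Thm.~13.2]{B-1}. Your outline assembles exactly the ingredients the surrounding text points to: the degree-two case of Theorem~\ref{thm:bkk} (Merkurjev--Suslin) identifies $\rH^2(G_K,\Q_\ell/\Z_\ell)$ with the image of $\pi_a^*$ in $\rH^2(\G^c_K)$, the inclusion $\rB_0(\G^c_K)\hookrightarrow\rB_{0,\ell}(G_K)$ is the profinite analogue of \eqref{eqan:bogg}, and the reverse inclusion is driven by the commuting-pairs theorem (Corollary~\ref{coro:ss}) together with the splitting of decomposition groups over an algebraically closed residue field. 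This is the shape of the argument in \cite{B-1}.

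One point needs tightening. You promise ``commuting lifts $g_1,g_2\in\D_\nu\subseteq G_K$'' but then justify the lift via the pro-$\ell$ \emph{quotient} $\D_\nu^{(\ell)}\simeq\Z_\ell\times G^{(\ell)}_{\KK_\nu}$. Lifts living only in the quotient $\G_K$ do not directly witness the vanishing condition defining $\rB_{0,\ell}(G_K)$, which is stated for bicyclic subgroups of $G_K$ itself. The painless repair is to replace the pro-$\ell$ quotient by an $\ell$-Sylow \emph{subgroup} $\Syl_\ell(\D_\nu)\subset\D_\nu\subset G_K$: wild inertia $P_\nu$ is pro-$p$ and hence invisible to $\Syl_\ell$, the tame quotient $\D_\nu/P_\nu\simeq\hat\Z^{(p')}\rtimes G_{\KK_\nu}$ has its $\Z_\ell$-factor central (the cyclotomic action being trivial), and therefore $\Syl_\ell(\D_\nu)\simeq\Z_\ell\times\Syl_\ell(G_{\KK_\nu})$ as an honest subgroup of $G_K$. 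With this adjustment your lifting step and the induction on $\trdeg_k(K)$ run inside $G_K$ as intended, and the argument goes through.
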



Here is a sample of known facts:
\begin{itemize}
\item if $X$ is stably rational over $k$, then  
$$
\rB_0(G_K)=0;
$$ 
\item if $X=V/G$, where $V$ is a faithful representation of $G$ over an algebraically closed
field of characteristic coprime to the order of $G$, and $K=k(X)$,  then 
$$
\rB_0(G)=\rB_0(G_K),
$$
thus nonzero in many cases.
\end{itemize}

Already this shows that the groups $G_K$ are quite special. 
The following ``Freeness conjecture'' is related to the 
Bloch--Kato conjecture discussed in Section~\ref{sect:bloch-kato}; it would imply that
all cohomology of $G_K$ is induced from metabelian finite $\ell$-groups.

\begin{conj}[Bogomolov]
For $K=k(X)$, with $k$ algebraically closed of characteristic $\neq \ell$, 
let 
$$
\Syl^{(2)}_{\ell,n,k}=[\Syl_{\ell,n,k},\Syl_{\ell,n,k}], 
$$   
and let $M$ be a finite  $\Syl^{(2)}_{\ell,n,k}$-module. 
Then
$$
\rH^i(\Syl^{(2)}_{\ell,n,k}, M)=0,  \quad \text{ for all } \quad i\ge 2.
$$
\end{conj}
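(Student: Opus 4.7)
The conjecture asserts that $\Syl^{(2)}_{\ell,n,k}$ has cohomological dimension at most one as a pro-$\ell$ group, equivalently (by Serre's theorem) that it is a free pro-$\ell$ group.  By standard d\'evissage this reduces to the single vanishing $\rH^2(\Syl^{(2)}_{\ell,n,k}, \F_\ell) = 0$: any finite module of order prime to $\ell$ has vanishing cohomology over a pro-$\ell$ group via transfer, while any finite $\ell$-group module admits a composition series with $\F_\ell$-quotients, reducing to the trivial coefficient case.

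My plan is to extract this vanishing from the Bloch--Kato theorem via the Hochschild--Serre spectral sequence
$$
E_2^{p,q} = \rH^p(\Syl^a, \rH^q(\Syl^{(2)}, \F_\ell)) \Rightarrow \rH^{p+q}(\Syl, \F_\ell)
$$
associated to $1 \to \Syl^{(2)} \to \Syl \to \Syl^a \to 1$.  By Proposition~\ref{prop:sylly} one may take $K = k(\P^n)$, and then Bloch--Kato (Theorem~\ref{thm:bkk}) gives $\rH^*(G_K, \F_\ell) \cong \rK^M_*(K)/\ell$, generated in degree one modulo Steinberg relations.  Since $[G_K : \Syl]$ is prime to $\ell$, restriction-corestriction embeds $\rH^*(G_K, \F_\ell)$ as a direct summand of $\rH^*(\Syl, \F_\ell)$, and in the Bloch--Kato regime one expects no extra ``unstable'' Sylow cohomology, so that the edge map $\wedge^*\rH^1(\Syl^a, \F_\ell) \to \rH^*(\Syl, \F_\ell)$ is surjective with Steinberg kernel.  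A Poincar\'e series comparison on the two sides of the spectral sequence then forces $E_2^{p,q} = 0$ for $q \ge 2$; in particular $\rH^2(\Syl^{(2)}, \F_\ell) = E_2^{0,2} = 0$ as required.

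The main obstacle is justifying that $\rH^*(\Syl, \F_\ell)$ is itself generated in degree one with only Steinberg relations, i.e., that it has no unstable classes beyond those visible on $G_K$.  A natural route is to exploit the universality of $\Syl_{\ell,n,k}$ given by Proposition~\ref{prop:sylly} together with the theory of commuting pairs (Theorem~\ref{thm:sigma}): inertia elements of divisorial valuations generate $\Syl_{\ell,n,k}$ topologically, and the only relations among them that survive in the commutator arise from commuting pairs, which are linear, degree-two relations of precisely the Steinberg type.  Alternatively one may control the kernel of the natural surjection $\Syl_\ell(G_K) \twoheadrightarrow \G_K$ (when available) and show its mod-$\ell$ cohomology vanishes, reducing the problem to the pro-$\ell$ quotient $\G_K$ where the Bloch--Kato description applies verbatim.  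Either approach requires structural information about $\Syl_{\ell,n,k}$ that goes well beyond pure Galois-cohomological formalism, and this is where the bulk of the work would lie.
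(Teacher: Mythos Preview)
This statement is labeled a \emph{Conjecture} in the paper, and the paper gives no proof of it; it only remarks that the Freeness conjecture is related to Bloch--Kato and to the Koszul property of $\rK^M_*(K)/\ell$, with a pointer to the literature.  So there is no ``paper's own proof'' to compare against: your proposal is an attempt to settle an open problem, not a reconstruction of an argument the authors provided.

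As for the proposal itself, you correctly identify that the claim is equivalent to $\Syl^{(2)}_{\ell,n,k}$ being free pro-$\ell$, and the d\'evissage to $\F_\ell$-coefficients is fine.  The genuine gap is the one you yourself flag: Bloch--Kato controls $\rH^*(G_K,\F_\ell)$, not $\rH^*(\Syl_\ell(G_K),\F_\ell)$, and your spectral-sequence argument needs the latter to be generated in degree one with only Steinberg-type relations.  That assertion is essentially of the same strength as the conjecture you are trying to prove, so the argument is circular as it stands.  Neither of the two routes you sketch in the last paragraph is known to work: the first (commuting pairs plus generation by inertia) does not by itself bound the relations in $\Syl$ rather than in $\G_K$, and the second presupposes control over the kernel of $\Syl_\ell(G_K)\twoheadrightarrow \G_K$, which is again the unknown.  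In short, your outline is a reasonable heuristic for why one might believe the conjecture, but it does not constitute a proof, and the paper does not claim one either.
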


Further discussions in this direction, in particular, concerning the connections between the Bloch--Kato conjecture, 
``Freeness'', and the Koszul property of the algebra $\rK_*^M(K)/\ell$, can be found in \cite{posi} and 
\cite{posi-vish}.

\section{Stabilization}
\label{sect:stabilization}

The varieties $V/G$ considered in the Introduction seem very special. On the other hand,
let $X$ be any variety over a field $k$ and let 
$$
G_{k(X)}\ra G
$$
be a continuous homomorphism from its Galois group onto some {\em finite} group. 
Let $V$ be a faithful representation of $G$. 
Then we have two homomorphisms (for cohomology with finite coefficients and trivial action)
$$
\kappa_X \colon \rH^*(G)\ra \rH^*(G_{k(X)}) 
$$
and 
$$
\kappa_{V/G}\colon \rH^*(G)\ra \rH^*(G_{k(V/G)}).
$$
These satisfy
\begin{itemize}
\item $\Ker(\kappa_{V/G})\subseteq \rH^*(G)$ is independent of $V$, and the quotient 
$$
\rH^*_{s}(G):=\rH^*(G)/\Ker(\kappa_{V/G})
$$
is well-defined;
\item $\Ker(\kappa_{V/G})\subseteq \Ker(\kappa_X)$. 
\end{itemize}
The groups $\rH^i_{s}(G)$ are called {\em stable} cohomology groups of $G$.
They were introduced and studied by the first author in \cite{bog-stable}.
{\em A priori}, these groups depend on the ground field $k$. 
We get a surjective homomorphism
$$
\rH^*_{s}(G)\ra \rH^*(G)/\Ker(\kappa_X). 
$$
This explains the interest in stable cohomology---all 
group-cohomological invariants arising from 
finite quotients of $G_{k(X)}$ arise from similar invariants of $V/G$. 
On the other hand, there is no effective procedure for the computation of
stable cohomology, except in special cases. 
For example, for abelian groups the stabilization
can be described already on the group level:

\begin{prop}[see, e.g., \cite{bog-stable}] 
\label{prop:stab-abel}
Let $G$ be a finite abelian group and
$\sigma :\Z^m\to G$ a surjective homomorphism.
Then $\kappa^* : \rH^*(G)\to \rH^*(\Z^m)$ coincides with
the stabilization map, i.e., 
$$
\Ker(\kappa^*)=\Ker(\kappa_{V/G})
$$ 
for any 
faithful representation $V$ of $G$, 
for arbitrary ground fields $k$ with
$\char(k)$ coprime to the order of $G$.
\end{prop}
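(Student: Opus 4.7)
The plan is to identify both $\Ker(\kappa^*)$ and $\Ker(\kappa_{V/G})$ with the same explicit ideal $I_\beta \subset \rH^*(G, \Z/N)$, where $N$ is a sufficiently large multiple of $\exp G$ and $I_\beta$ is generated by the Bocksteins $\beta_i = \delta(\chi_i)$ of the generating characters $\chi_i \in \rH^1(G,\Z/N) = \Hom(G,\Z/N)$. The geometric input is Fischer's theorem: for abelian $G$ and any faithful representation $V$ of dimension $n$ over $k$ (which we may assume contains $\mu_N$, and even $\mu_{N^2}$, after a harmless finite extension since $\char k$ is coprime to $|G|$), the quotient $V/G$ is rational, so $k(V/G) = k(y_1,\ldots,y_n)$ and $k(V)/k(V/G)$ is the explicit Kummer extension $x_i = \sqrt[d_i]{y_i}$ for $G = \bigoplus_i \Z/d_i$.

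First I would compute $\Ker(\kappa^*)$ via K\"unneth and cyclic cohomology: using $\rH^*(\Z/d_i, \Z/N) = \Z/d_i[\beta_i]\otimes\Lambda_{\Z/d_i}[\alpha_i]$, the vanishing $\rH^{\ge 2}(\Z, \Z/N) = 0$ forces $\sigma^*$ to kill every Bockstein $\beta_i$ while remaining injective on the exterior subalgebra $\Lambda^*(\hat G)$. Hence $\Ker(\kappa^*) = I_\beta$, independent of $m$ and of the choice of surjection $\sigma$.

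Next I would compute $\Ker(\kappa_{V/G})$ directly. By Kummer theory, the character $\chi_i$ pulls back under $\kappa_{V/G}$ to the class of $y_i^{N/d_i} \in k(V/G)^\times/N = \rH^1(G_{k(V/G)},\mu_N)$, so any wedge $\chi_{i_1}\wedge\cdots\wedge\chi_{i_s}$ maps to the Milnor symbol $\{y_{i_1}^{N/d_{i_1}},\ldots,y_{i_s}^{N/d_{i_s}}\}$ in $\rK^M_s(k(V/G))/N\cong \rH^s(G_{k(V/G)},\mu_N^{\otimes s})$ (Bloch--Kato). Iterating the tame-symbol maps along the coordinate divisors $\{y_j=0\}$ shows these symbols are nonzero and linearly independent modulo $N$, so $\kappa_{V/G}$ is injective on $\Lambda^*(\hat G)$. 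On the other hand, each $\beta_i$ maps to the symbol $\{y_i^{N/d_i},\zeta_N\}$; since $\mu_{N^2}\subset k$ we have $\zeta_N = u^N$ for some $u\in k^\times$, whence $\{y_i^{N/d_i},\zeta_N\} = N\{y_i^{N/d_i},u\} \equiv 0\pmod N$. Together these show $\Ker(\kappa_{V/G}) = I_\beta = \Ker(\kappa^*)$.

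The main obstacle is the injectivity of $\Lambda^*(\hat G)$ into $\rK^M_*(k(V/G))/N$: the Steinberg relations in Milnor K-theory could in principle entangle the symbols $\{y_{i_1}^{N/d_{i_1}},\ldots,y_{i_s}^{N/d_{i_s}}\}$. The standard workaround is Milnor's exact sequence for $\rK^M_*$ of a rational function field, which decomposes the K-theory into tame-symbol contributions indexed by irreducible divisors and lets one extract each $y_i$ iteratively, reducing everything to the fact that $\rK^M_0(k)/N = \Z/N$ is nonzero. A purely Galois-theoretic alternative for the inclusion $\Ker(\kappa_{V/G})\subseteq\Ker(\kappa^*)$ is to construct a continuous lift $\tau\colon\hat\Z^m \to G_{k(V/G)}$ of $\sigma$ through the natural surjection $G_{k(V/G)}\twoheadrightarrow G$ using commuting inertia elements of the coordinate valuations (which commute in the tame quotient at the origin of $V/G\cong\A^n$ because the divisors $\{y_i=0\}$ meet transversally there), so that $\kappa^* = \tau^*\circ\kappa_{V/G}$ yields the inclusion immediately.
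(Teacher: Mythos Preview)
Your argument is essentially sound, but the paper takes a different and more geometric route. It does not prove the proposition in detail; it gives a one-line hint: choose $V$ to be a sum of characters and let $T\subset V$ be the open torus. Then $G$ acts on $T$ by translations, $T/G$ is again a torus, and the factorization $G_{k(V/G)}\twoheadrightarrow\pi_1(T/G)\twoheadrightarrow G$, with $\pi_1(T/G)$ free abelian, immediately gives $\Ker(\kappa_{V/G})=\Ker(\kappa^*)$ once one knows that $\rH^*(\pi_1(T/G))$ injects into $\rH^*(G_{k(T/G)})$---which holds because a torus is an \'etale $K(\pi,1)$ whose cohomology injects into that of its generic point. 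The ``Galois-theoretic alternative'' you sketch at the end, lifting $\sigma$ through commuting inertia at the origin, is exactly this torus argument in coordinates: those inertia elements generate $\pi_1(T/G)$. Your primary route via the explicit Bockstein ideal $I_\beta$ and Milnor $\rK$-theory is correct and has the virtue of making the stable image explicit as $\Lambda^*(\hat G)$; the torus framing, by contrast, avoids invoking Bloch--Kato altogether.

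Two minor repairs are needed in your write-up. First, the formula $\rH^*(\Z/d_i,\Z/N)=\Z/d_i[\beta_i]\otimes\Lambda_{\Z/d_i}[\alpha_i]$ is wrong in degree~$0$ (one gets $\Z/N$, not $\Z/d_i$), and K\"unneth over $\Z/N$ acquires Tor-terms when the $d_i$ share common factors. Second, the step ``$\sigma^*$ is injective on $\Lambda^*(\hat G)$'' is not automatic, since $\hat G\hookrightarrow(\Z/N)^m$ need not be a direct summand of $\Z/N$-modules. Both are easily fixed: for the second, put $\sigma$ into Smith normal form so that it becomes the standard surjection $\Z^m\to\bigoplus_i\Z/d_i$, or simply reuse your tame-symbol argument by realizing $\Z^m$ as $\pi_1$ of a torus covering $T/G$.
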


Geometrically, stabilization is achieved on the variety $T/G\subset V/G$, where
$G$ acts faithfully on $V$ by diagonal matrices and $T\subset V$ 
is a $G$-invariant subtorus in $V$ (see, e.g., \cite{bog-linear}).

Similar actions exist for any finite group $G$:
there is faithful representation
$V$ and a torus $T\subset \Aut(V)$, with normalizer $N=N(T)$ 
such that $G\subset N\subset \Aut(V)$, and such
that $G$ acts {\em freely} on $T$.
We have an exact sequence
$$
1\ra \pi_1(T)\ra \pi_1(T/G)\ra G\ra 1
$$
of topological fundamental groups. Note that $\pi_1(T)$
decomposes as a sum of $G$-permutation modules and that 
$\pi_1(T/G)$ is torsion-free of 
cohomological dimension $\dim(T) = \dim(V)$.
Torus actions were considered by Saltman \cite{saltman-torus}, 
and the special case of actions coming
from restrictions to open tori in linear representations
by the first author in \cite{bog-linear}.

\

The following proposition, a consequence of the Bloch--Kato conjecture, 
describes a partial stabilization for central extensions of abelian groups.  

\begin{prop}
\label{prop:stab-partial}
Let $G^c$ be a finite $\ell$-group which is a central extension of an abelian group
\begin{equation}
\label{eqn:induced}
0\ra Z\ra G^c\ra G^a\ra 0, \quad Z=[G^c,G^c],
\end{equation}
and $K=k(V/G^c)$. 
Let 
$$
\phi_a\colon \Z^m_{\ell}\ra G^a
$$
be a surjection and
$$
0\ra Z\ra D^c\ra \Z_{\ell}^m \ra 0
$$
the central extension induced from  \eqref{eqn:induced}.
Then 
$$
\Ker(\rH^*(G^a)\to \rH^*(D^c))=
\Ker(\rH^*(G^a)\to \rH^*(\Gal_{K})), 
$$
for cohomology with $\Z/\ell^n$-coefficients, $n\in \N$. 
\end{prop}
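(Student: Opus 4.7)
The plan is to reduce both kernel identifications to a comparison of degree 2 transgressions in Hochschild--Serre spectral sequences, using the Bloch--Kato conjecture (Theorem~\ref{thm:bkk}) as a bridge between the finite and pro-$\ell$ settings. Since $K = k(V/G^c)$, the cover $V\to V/G^c$ realizes $G^c$ as a quotient of $\Gal_K$, and composition with $G^c\to G^a$ gives a surjection that factors through the abelianization $\G_K^a$. By Theorem~\ref{thm:bkk}, the kernel of $\rH^*(\G_K^a,\Z/\ell^n)\to \rH^*(\Gal_K,\Z/\ell^n)$ equals the kernel of $\rH^*(\G_K^a,\Z/\ell^n)\to \rH^*(\G_K^c,\Z/\ell^n)$. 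Composing with inflation from $G^a$, we obtain
\[
\Ker\bigl(\rH^*(G^a)\to \rH^*(\Gal_K)\bigr) = \Ker\bigl(\rH^*(G^a)\to \rH^*(\G_K^c)\bigr).
\]

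Next I would arrange that $\phi_a$ lifts to a direct summand $\iota\colon \Z_\ell^m\hookrightarrow \G_K^a$. This is possible because by Kummer theory $\G_K^a=\Hom(K^\times/k^\times,\Z_\ell)$ is a torsion-free pro-$\ell$ module, and any surjection onto the finite $\ell$-group $G^a$ splits off a finite-rank summand. Both $\Z_\ell^m$ and $\G_K^a$ have exterior algebra cohomology rings over $\Z/\ell^n$, so the standard Hochschild--Serre argument (used in the proof of Theorem~\ref{thm:bkk}) shows that the kernel of inflation to the corresponding central extension by $Z$ is an ideal generated in degree 2 by the image of the transgression $\Hom(Z,\Z/\ell^n)\to \rH^2$. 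Applying this to both $D^c\to \Z_\ell^m$ and $\G_K^c\to \G_K^a$, we see that both kernels we wish to compare are ideals in $\rH^*(G^a,\Z/\ell^n)$ generated by their degree 2 parts, so it suffices to check equality in degree 2.

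In degree 2, naturality of transgression together with $[D^c]=\phi_a^*[G^c]\in \rH^2(\Z_\ell^m,Z)$ shows that the kernel of $\rH^2(G^a)\to \rH^2(D^c)$ is the preimage under $\phi_a^*$ of the transgressed image of $\Hom(Z,\Z/\ell^n)$ in $\rH^2(\Z_\ell^m,\Z/\ell^n)$. On the other hand, the kernel of $\rH^2(G^a)\to \rH^2(\G_K^c)$ is controlled by the extension class of $\G_K^c$, which under $\G_K^a\to G^a$ dominates $[G^c]\in \rH^2(G^a,Z)$ by the universal property of the canonical central extension. The main obstacle is to show that these two descriptions coincide in $\rH^2(G^a,\Z/\ell^n)$: concretely, one must verify that every class in $\rH^2(G^a)$ which dies upon inflation to $\rH^2(\G_K^c)$ already dies in $\rH^2(D^c)$, i.e., that the chosen summand $\iota$ captures all the transgression relations from $Z_K=[\G_K^c,\G_K^c]$ that restrict to transgressions coming from the finite center $Z\subset G^c$. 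This compatibility follows from factoring the natural surjection $\G_K^c\to G^c$ through the pullback $\G_K^a\times_{G^a}G^c$ and then comparing with $D^c=\Z_\ell^m\times_{G^a}G^c$ via $\iota$; verifying this identification cleanly is the technical heart of the argument.
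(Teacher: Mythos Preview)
Your reduction is the same as the paper's: invoke Theorem~\ref{thm:bkk} to replace $\Gal_K$ by $\G_K^c$, then observe that for a central extension of a torsion-free abelian pro-$\ell$ group the kernel of inflation is an ideal generated in degree~$2$, so only the degree-$2$ comparison matters. Where you diverge is in that comparison. You try to realize $\phi_a$ through a chosen summand $\iota\colon\Z_\ell^m\hookrightarrow\G_K^a$ and then chase pullbacks $\G_K^a\times_{G^a}G^c$ versus $D^c=\Z_\ell^m\times_{G^a}G^c$; you yourself flag the verification of compatibility as ``the technical heart of the argument'' and leave it unresolved.

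The paper bypasses this entirely by writing down $I_2$ intrinsically:
\[
I_2=\Ker\bigl(\rH^2(G^a)\to\rH^2(G^c)\bigr)\oplus\delta\bigl(\rH^1(G^a)\bigr),
\]
where $\delta$ is the Bockstein. The point is that this description depends only on the finite extension $G^c\to G^a$, not on $\phi_a$, not on $m$, and not on $\G_K^a$. Any torsion-free $\Z_\ell$-module $A$ surjecting onto $G^a$ has exterior-algebra cohomology, so inflation $\rH^2(G^a)\to\rH^2(A)$ kills exactly the Bockstein classes; and the further kernel into the induced central extension is exactly the transgression image, which is $\Ker(\rH^2(G^a)\to\rH^2(G^c))$ pulled back. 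Hence the degree-$2$ kernel is the same $I_2$ whether one computes with $D^c$ over $\Z_\ell^m$ or with $\G_K^c$ over $\G_K^a$. (The paper does record a lift $\G_K^a\to\Z_\ell^m$---the opposite direction from your $\iota$---to place everything in one diagram, but the argument does not hinge on it.)

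So your proposal is not wrong, but the obstacle you identify is illusory once you see the formula for $I_2$; your pullback machinery is unnecessary.
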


\begin{proof}
Since $\G^a_K$ is a torsion-free $\Z_{\ell}$-module we have a diagram

\centerline{
\xymatrix{ 
 & G_K \ar@{>>}[r]    & \G_K^c\ar[r] \ar@{-->>}[d]   & \G_K^a \ar[r] \ar@{>>}[d]      & 0   \\
0\ar[r]& Z \ar[r]\ar@{=}[d] & D^c\ar[r]   \ar@{>>}[d]        & \Z_{\ell}^m \ar@{>>}[d]^{\phi_a} \ar[r] & 0\\
0\ar[r]& Z   \ar[r]           & G^c \ar[r]                & G^a \ar[r]           & 0
} 
}

\

\noindent
By Theorem~\ref{thm:bkk}, 
$$
\Ker\left(\rH^*(G^a)\to \rH^*(G_K) \right) 
= 
\Ker\left(\rH^*(G^a)\to \rH^*(\G^c_{K})\right).
$$
Note that
$$
I:=\Ker\left(\rH^*(G^a)\to \rH^*(D^c)\right)
$$ 
is an ideal generated by its degree-two elements $I_2$ and that 
$$
I_2=\Ker\left(\rH^2(G^a)\to \rH^2(G^c)\right) \oplus 
\delta(\rH^1(G^a)).
$$
Similarly, for all intermediate $D^c$
$$
\Ker\left(\rH^*(G^a)\to \rH^*(D^c)\right) 
$$ 
is also generated by $I_2$, and hence equals $I$.
\end{proof}

\begin{coro} 
Let $G^c$ be a finite $\ell$-group as above,  
$\rR\subseteq \wedge^2(G^a)$ the 
subgroup of relations defining $D^c$, and  let 
$$
\Sigma=\{ \sigma_i\subset G^a\}
$$ 
be the set of subgroups of $G^a$ liftable to abelian subgroups of $G^c$. 
Then the image of $\rH^*(G^a,\Z/\ell^n)$ in
$\rH^*_s(G^c,\Z/\ell^n)$ coincides with $\wedge^*(G^a)^*/ I_2$, where
$I_2\subseteq \wedge^2(G^a)$ are the elements orthogonal to $\rR$  
(with respect to the natural pairing).
\end{coro}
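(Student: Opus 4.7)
The plan is to reduce the assertion to Proposition~\ref{prop:stab-partial}, which eliminates the Galois-theoretic content, and then to identify $I_2$ by a direct Hochschild--Serre computation on a pullback central extension.

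First I would unwind the definitions: the image of $\rH^*(G^a,\Z/\ell^n)$ in $\rH^*_s(G^c,\Z/\ell^n)$ equals $\rH^*(G^a,\Z/\ell^n)$ modulo the kernel of the composition
$$
\rH^*(G^a,\Z/\ell^n)\stackrel{\pi_a^*}{\lra}\rH^*(G^c,\Z/\ell^n)\stackrel{\kappa}{\lra}\rH^*(\Gal_{k(V/G^c)},\Z/\ell^n),
$$
so the task is to identify this total kernel with the ideal of $\wedge^*(G^a)^*$ generated by $I_2$. Choose a surjection $\phi_a\colon \Z_\ell^m\to G^a$ and let $0\to Z\to D^c\to \Z_\ell^m\to 0$ be the induced pullback central extension. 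Proposition~\ref{prop:stab-partial} (which uses the Bloch--Kato theorem through Theorem~\ref{thm:bkk}) then identifies the above kernel with $\Ker(\rH^*(G^a,\Z/\ell^n)\to \rH^*(D^c,\Z/\ell^n))$. This is the decisive input: it replaces the unwieldy group $\Gal_{k(V/G^c)}$ by the manageable object $D^c$, and is the only place where anything beyond group cohomology enters.

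It remains to identify $\Ker(\rH^*(G^a)\to \rH^*(D^c))$ as the ideal generated by $\rR^\perp$. The Hochschild--Serre spectral sequence for $0\to Z\to D^c\to \Z_\ell^m\to 0$ produces the five-term sequence
$$
0\to \rH^1(\Z_\ell^m)\to \rH^1(D^c)\to \Hom(Z,\Z/\ell^n)\stackrel{d_2}{\lra}\rH^2(\Z_\ell^m)\to \rH^2(D^c),
$$
so $\Ker(\rH^2(\Z_\ell^m)\to \rH^2(D^c))$ is the image of $d_2$. Under the standard identifications $\rH^2(\Z_\ell^m,\Z/\ell^n)=\wedge^2(\Z_\ell^m)^*$ and $Z=\wedge^2(\Z_\ell^m)/\rR'$ (with $\rR'=\phi_a^{-1}\rR$), the transgression $d_2$ sends a character $\chi$ of $Z$ to $\chi\circ\lambda$, where $\lambda\colon \wedge^2(\Z_\ell^m)\to Z$ is the commutator map; since $\lambda$ is surjective with kernel $\rR'$, the image of $d_2$ is the annihilator $(\rR')^\perp$. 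Pulling back along $\phi_a$ and accounting for the Bockstein contribution $\delta(\rH^1(G^a))$ coming from the finiteness of $G^a$, one recovers the decomposition $I_2=\rR^\perp\oplus \delta(\rH^1(G^a))$ already present in the proof of Proposition~\ref{prop:stab-partial}. Because $\rH^*(\Z_\ell^m,\Z/\ell^n)$ is an exterior algebra on its first cohomology, the entire kernel of $\rH^*(\Z_\ell^m)\to \rH^*(D^c)$ is the ideal generated by its degree-two part (a standard spectral-sequence argument), and pulling back along $\phi_a$ yields the ideal $\langle I_2\rangle$ in $\wedge^*(G^a)^*$.

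The main obstacle I anticipate is the careful bookkeeping of the Bockstein summand: one must verify that the finiteness of $G^a$ contributes precisely $\delta(\rH^1(G^a))$ to $I_2$ and no extraneous terms, and that the statement's identification $I_2\subseteq \wedge^2(G^a)$ (read as the annihilator of $\rR$) is compatible with this decomposition after passage to $\wedge^*(G^a)^*$. Apart from this accounting, the corollary is a direct consequence of Proposition~\ref{prop:stab-partial} combined with the Hochschild--Serre computation above.
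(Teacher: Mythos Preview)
Your proposal is correct and follows the same route as the paper: the Corollary is stated without proof and is meant as a direct consequence of Proposition~\ref{prop:stab-partial}, whose proof already records that $\Ker(\rH^*(G^a)\to\rH^*(D^c))$ is the ideal generated by its degree-two part $I_2=\Ker(\rH^2(G^a)\to\rH^2(G^c))\oplus\delta(\rH^1(G^a))$. Your Hochschild--Serre identification of this degree-two kernel with the annihilator of $\rR$ is exactly the standard computation left implicit in the paper, and your flagged ``obstacle'' about reconciling the Bockstein summand with the Corollary's phrasing of $I_2$ as $\rR^\perp$ is a genuine notational wrinkle in the paper itself rather than a gap in your argument.
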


\begin{lemm} 
For any finite group $G^c$ there is a torsion-free group
$\G^c$ with $\G^a= \Z_\ell^n$ and $[\G^c,\G^c] = \Z_\ell^m$ with a natural
surjection $\G^c\to G^c$ and a natural embedding
$$
\Ker(\rH^2(G^a)\to \rH^2(G^c)) = 
\Ker(\rH^2(\G^a)\to \rH^2(\G^c)),
$$
for cohomology with $\Q_{\ell}/\Z_{\ell}$-coefficients.
\end{lemm}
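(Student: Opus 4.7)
My plan is to build $\G^c$ as a minimal torsion-free central lift of $G^c$ over the lattice $\Z_\ell^n$, and then match the kernels of the two inflation maps using Lyndon--Hochschild--Serre spectral sequences. First, choose a minimal generating set of $G^a$ of size $n$ and lift it to $\Z_\ell^n$, producing a surjection $\phi_a\colon \Z_\ell^n \twoheadrightarrow G^a$. Let $F^c$ denote the free class-$2$ nilpotent pro-$\ell$ group on $n$ generators, so that $F^a=\Z_\ell^n$ and $[F^c,F^c]=\wedge^2(\Z_\ell^n)$. Lifts of the generators of $G^a$ to $G^c$ yield a surjection $F^c\twoheadrightarrow G^c$, inducing a surjective commutator map $\tilde{c}\colon \wedge^2(\Z_\ell^n)\to Z$. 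Choose a direct-summand sublattice $\tilde R\subseteq \Ker(\tilde c)$ and set $\G^c:=F^c/\tilde R$; then $\G^a=\Z_\ell^n$, the commutator $[\G^c,\G^c]=\wedge^2(\Z_\ell^n)/\tilde R\cong \Z_\ell^m$ is torsion-free, and $\G^c\twoheadrightarrow G^c$ arises from the construction.

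For the cohomological comparison, apply the Lyndon--Hochschild--Serre spectral sequence with $\Q_\ell/\Z_\ell$-coefficients to both central extensions
\[
0\to Z\to G^c\to G^a\to 0, \qquad 0\to \Z_\ell^m\to \G^c\to \Z_\ell^n\to 0.
\]
The action on the centre is trivial in both cases, so the kernel of $\pi_a^*$ on $H^2$ equals the image of the transgression $d_2\colon H^1(\mathrm{centre})\to H^2(\mathrm{abelianisation})$. This transgression is Pontryagin dual to the commutator map of the extension. Consequently $\Ker(H^2(G^a)\to H^2(G^c))$ is identified with $\Hom(Z,\Q_\ell/\Z_\ell)\hookrightarrow H^2(G^a,\Q_\ell/\Z_\ell)$ via $c^{\vee}$, and $\Ker(H^2(\G^a)\to H^2(\G^c))$ with $\Hom(\Z_\ell^m,\Q_\ell/\Z_\ell)\hookrightarrow H^2(\G^a,\Q_\ell/\Z_\ell)$ via $(c')^{\vee}$.

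The natural embedding is then read off from the commutative square of commutator maps $\wedge^2\Z_\ell^n\twoheadrightarrow \wedge^2 G^a$ and $\Z_\ell^m\twoheadrightarrow Z$: dualising produces the canonical injection $\Hom(Z,\Q_\ell/\Z_\ell)\hookrightarrow \Hom(\Z_\ell^m,\Q_\ell/\Z_\ell)$ which, under the identifications from the previous step, is precisely the restriction of $\phi_a^*\colon H^2(G^a)\to H^2(\G^a)$ to the respective kernels, and hence is the ``natural embedding'' asserted. Naturality in $G^c$ and functoriality of the LHS spectral sequence in the morphism $\G^c\to G^c$ give the required compatibility.

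The main obstacle I expect is the correct choice of $\tilde R$: since $\Ker(\tilde c)$ has finite index in $\wedge^2(\Z_\ell^n)$, any direct-summand sublattice contained in it must have strictly smaller rank, so one needs to extract a primitive complement of a chosen set-theoretic section of $\wedge^2(\Z_\ell^n)\to Z$. Controlling the resulting rank $m$ and verifying that the dual of $\Z_\ell^m\twoheadrightarrow Z$ really is the prescribed embedding into $\Hom(\Z_\ell^m,\Q_\ell/\Z_\ell)$ requires careful bookkeeping with the transgression maps. A secondary technical point is handling uniformly the cohomology of the finite group $G^a$ and the continuous cohomology of the profinite group $\Z_\ell^n$, but since $\Q_\ell/\Z_\ell$ is an injective cogenerator for $\Z_\ell$-modules, Pontryagin duality mediates the two consistently.
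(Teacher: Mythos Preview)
Your construction of $\G^c$ as a quotient of the free class-$2$ nilpotent pro-$\ell$ group $F^c$ by a primitive sublattice $\tilde R\subseteq\Ker(\tilde c)$, together with the transgression computation via the Lyndon--Hochschild--Serre spectral sequence, is correct and yields the embedding of kernels. In fact you can simply take $\tilde R=0$, i.e.\ $\G^c=F^c$ itself with $m=\binom{n}{2}$: your own argument shows that for \emph{any} such $\tilde R$ the map $\phi_a^*$ carries $\Ker(\rH^2(G^a)\to\rH^2(G^c))$ into $\Ker(\rH^2(\G^a)\to\rH^2(\G^c))$, so there is no need for the delicate choice you worry about at the end. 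The ``$=$'' in the displayed formula of the lemma should be read as the asserted embedding (as the surrounding text says); a literal equality cannot hold once $[\G^c,\G^c]$ is torsion-free and $Z$ is finite, exactly for the reason your last paragraph anticipates: $\Ker(\tilde c)$ has finite index in $\wedge^2\Z_\ell^n$ and no proper direct summand can coincide with it.

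The paper's argument is rather different in spirit. It does not construct $\G^c$ at all; the existence is treated as evident (e.g.\ via the free class-$2$ lift you wrote down, or the group $D^c$ already produced in the preceding proposition). Instead the paper addresses the word ``natural'' by proving a uniqueness-type statement: given two torsion-free central extensions $\G^c,\cH^c$ of the same $\Z_\ell^n$ with the same kernel in $\rH^2$, there is a section $\cH^c\to\G^c$. The point is that such a central extension is completely determined by the surjection $\wedge^2(\Z_\ell^n)\to[\G^c,\G^c]$, and since the targets are free $\Z_\ell$-modules, equal kernels force the commutator maps to factor through one another. Your approach buys an explicit construction and a transparent cohomological identification of the kernels; the paper's approach buys the assurance that the lift is canonical up to a section, which is what justifies calling the embedding ``natural''. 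The two are complementary rather than competing.
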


\begin{proof}
Assume that we have a diagram of central extensions

\

\centerline{
\xymatrix{
0 \ar[r] & Z_{\G}\ar[r]\ar[d] & \G^c\ar[r]^{\pi_{a,\G}} \ar@{>>}[d]_{\pi_c}  & \G^a \ar[r] \ar@{=}[d] & 0  \\ 
0 \ar[r] & Z_{\cH}\ar[r] & \H^c\ar[r]_{\pi_{a,\cH}}                      & \cH^a \ar[r]                  & 0 
}
}

\

\noindent
with $\G^a = \cH^a$, $Z_{\G}$, and $Z_{\cH}$ finite rank torsion-free $\Z_{\ell}$-modules.
Assume that
$$
\Ker(\pi_{a,\cH}^*):= \Ker\left(\rH^2(\cH^a, \Z_{\ell})\to \rH^2(\cH^c, \Z_{\ell})\right)
$$
coincides with 
$$
\Ker(\pi_{a,\G}^*):= \Ker\left(\rH^2(\G^a, \Z_{\ell})\to \rH^2(\G^c, \Z_{\ell})\right).
$$
Then there is a section 
$$
s:  \cH^c\to \G^c, \quad \pi^c \circ s=id.
$$

Indeed, since $\cH^a, \G^a$ are torsion-free $\Z_\ell$-modules we have
$$
\rH^2(\cH^a, \Z_{\ell}))= \rH^2(\cH^a, \Z_\ell)) \pmod{\ell^n}, \quad \forall n\in \N,
$$ 
and 
$\rH^2(\cH^a, \Z_\ell))$ is a free $\Z_\ell$-module. The groups 
$\G^c,\cH^c$ are determined by the surjective homomorphisms
$$
\wedge^2(\cH^a)\to Z_{\cH}=[\cH^c,\cH^c], \quad \wedge^2(\G^a)\to Z_{\G}=[\G^c,\G^c].
$$
Since $Z_{\cH}, Z_{\G}$ are free $\Z_\ell$-modules, $\Ker(Z_{\G}\to Z_{\cH})$ 
is also a free $\Z_\ell$-module.
\end{proof}

Let $G$ be a finite group, $V$ a faithful representation of $G$ over $k$ and $K=k(V/G)$. 
We have a natural homomorphism $G_K\ra G$. Every valuation $\nu\in \Val_K$ defines a {\em residue} homomorphism
$$
\rH^*_{s}(G,\Z/\ell^n) \hookrightarrow \rH^*(G_K,\Z/\ell^n) \stackrel{\delta_{\nu}}{\lra} \rH^*(G_{K_{\nu}},\Z/\ell^n),
$$ 
and we define the stable 
{\em unramified} cohomology as the kernel of this homomorphism, 
over all divisorial valuations $\nu$:
$$
\rH^*_{s,nr}(G,\Z/\ell^n) = \{ \alpha \in \rH^*_{s}(G,\Z/\ell^n) \,\, \mid \,\, \delta_{\nu}(\alpha)=0 \quad \forall \nu\in \DVal_K \}.
$$
Again, this is independent of the choice of $V$ and is functorial in $G$. 
Fix an element $g\in G$. We say that 
$\alpha\in \rH^*_s(G,\Z/\ell^n)$ is $g$-unramified if 
the restriction of $\alpha $ to the centralizer 
$Z(g)$ of $g$ in $G$ is    
unramified (see \cite{bog-stable} for more details).

\begin{lemm}
\label{lemm:subring}
Let $G$ be a finite group of order coprime to $p=\char(k)$. 
Then 
$$
\rH^*_{s,nr}(G,\Z/\ell^n)\subseteq \rH^*_{s}(G,\Z/\ell^n)
$$ 
is the subring of elements which are $g$-unramified for all $g\in G$.
\end{lemm}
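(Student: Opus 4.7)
The plan is to establish both inclusions that constitute the asserted equality of subrings. Along the way, I verify that both sides are indeed subrings of $\rH^*_s(G,\Z/\ell^n)$: each residue map $\delta_\nu$ attached to $\nu\in\DVal_K$ (with $K=k(V/G)$) vanishes on a class exactly when that class lifts to the cohomology of the henselization of $\mathfrak{o}_\nu$, and a cup product of two such lifts again lifts; intersecting over all $\nu$ yields that $\rH^*_{s,nr}(G)$ is a subring, and the same holds for $\rH^*_{s,nr}(Z(g))$. Since restriction $\rH^*_s(G)\to\rH^*_s(Z(g))$ is a ring homomorphism, the set of $g$-unramified classes is also a subring, as is the intersection over all $g\in G$.

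For the inclusion $(\supseteq)$, suppose $\alpha\in\rH^*_s(G)$ is $g$-unramified for every $g\in G$, and fix $\nu\in\DVal_K$ with $K=k(V/G)$, $V$ a faithful $G$-representation. The cover $V\to V/G$ is Galois with group $G$ and, since $|G|$ is coprime to the characteristic of $k$, tamely ramified along $\nu$; hence the inertia $I_\nu\subset G$ is cyclic, generated by some $g\in G$. Since $k$ is algebraically closed, $\KK_\nu$ contains all roots of unity of order dividing $|g|$, so the cyclotomic character of $D_\nu/I_\nu$ acting on $I_\nu$ is trivial; because $I_\nu=\langle g\rangle$ lies in the center of $D_\nu$, we conclude $D_\nu\subseteq Z(g)$. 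The valuation residue $\delta_\nu$ factors through restriction to $D_\nu$ followed by the purely group-cohomological tame residue along the cyclic inertia $\langle g\rangle$; by transitivity through $D_\nu\hookrightarrow Z(g)\hookrightarrow G$, this equals the restriction to $D_\nu$ of the tame residue of $\alpha|_{Z(g)}$ at the canonical valuation of a model of $Z(g)$ whose inertia is $\langle g\rangle$. The latter tame residue vanishes by hypothesis, so $\delta_\nu(\alpha)=0$.

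For the inclusion $(\subseteq)$, fix $\alpha\in\rH^*_{s,nr}(G)$ and $g\in G$; I must show $\alpha|_{Z(g)}\in\rH^*_{s,nr}(Z(g))$. Let $W$ be a faithful $Z(g)$-representation and $\mu\in\DVal_{k(W/Z(g))}$. Set $V:=\Ind_{Z(g)}^G W$, a faithful $G$-representation that admits a $Z(g)$-equivariant projection onto $W$. The resulting inclusion of function fields together with the tower $k(V)^G\subset k(V)^{Z(g)}\subset k(V)$ allows me to extend $\mu$ to a divisorial valuation $\nu$ of $K=k(V/G)$ whose inertia under $V\to V/G$ is exactly $\langle g\rangle$. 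The functoriality of tame residues with respect to $Z(g)\hookrightarrow G$ then identifies $\delta_\mu(\alpha|_{Z(g)})$ with a component of $\delta_\nu(\alpha)$, which vanishes by assumption; independence of $W$ and $V$ is built into the definition of stable unramified cohomology \cite{bog-stable}.

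The main obstacle is the geometric realization in $(\subseteq)$: producing, for a given abstract $\mu$ on $k(W/Z(g))$, a divisorial valuation $\nu$ on $k(V/G)$ whose inertia generator matches $g$ and whose residue restricts to $\delta_\mu(\alpha|_{Z(g)})$. This rests on the standard tame-ramification dictionary for group quotients — a cyclic subgroup $\langle g\rangle\subset G$ together with a divisorial valuation on the $Z(g)$-quotient of a fixed-locus geometry determines a divisorial valuation of $K$ up to conjugacy, and every $\nu\in\DVal_K$ arises in this way — combined with the independence of stable cohomology from the choice of faithful representation.
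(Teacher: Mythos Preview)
Your outline has the two directions swapped in difficulty, and this leads to a genuine gap in each.

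\textbf{The direction $(\subseteq)$ is immediate.} If $\alpha\in\rH^*_{s,nr}(G)$, then for any subgroup $H\subseteq G$ the restriction lands in $\rH^*_{s,nr}(H)$: take the same faithful $G$-representation $V$, note that $k(V/H)$ is a finite extension of $k(V/G)$, and use that unramified Galois cohomology is stable under pullback along finite extensions. Applying this with $H=Z(g)$ gives the claim in one line; this is exactly what the paper means by ``by functoriality''. Your construction with $V=\Ind_{Z(g)}^G W$ is unnecessary, and the assertion that the extension $\nu$ of an \emph{arbitrary} $\mu\in\DVal_{k(W/Z(g))}$ has inertia in $V\to V/G$ equal to $\langle g\rangle$ is not justified (and is generally false: the inertia is determined by where $\mu$ ramifies in $W\to W/Z(g)$, which has nothing to do with $g$ a priori).

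\textbf{The direction $(\supseteq)$ is where the content lies, and your argument is incomplete at the key step.} You correctly identify that the inertia of $\nu$ in $V\to V/G$ is cyclic, generated by some $g$, and that the decomposition group sits inside $Z(g)$. But the sentence ``this equals the restriction to $D_\nu$ of the tame residue of $\alpha|_{Z(g)}$ at the canonical valuation of a model of $Z(g)$ whose inertia is $\langle g\rangle$'' is doing all the work and has no content as written: there is no ``canonical'' such valuation, and you have not produced \emph{any} divisorial valuation on a model $k(V'/Z(g))$ whose residue computes $\delta_\nu(\alpha)$. The paper supplies precisely this missing geometry: pass to a normal model $X$ of $V/G$ with $\nu$ realized by a smooth divisor $D$, pull back the formal neighborhood $D^*$ to components $D_i^*$ in $\bar V$, observe that the stabilizer of $D_i$ is contained in $Z(g)$, and then construct a $Z(g)$-equivariant map $D_i^*\to V'$ into a faithful $Z(g)$-representation. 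This map is what transports the local residue at $\nu$ to a residue on $k(V'/Z(g))$, where the hypothesis $\alpha|_{Z(g)}\in\rH^*_{s,nr}(Z(g))$ can be invoked. Without an explicit construction of this kind, the bridge between the Galois-cohomological residue $\delta_\nu$ on $G_K$ and the stable unramified condition on $Z(g)$ is missing.
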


\begin{proof}
We may assume that $G$ is an $\ell$-group, with $\ell$ coprime to $\char(k)$.
By functoriality, a class  $\alpha\in  \rH^*_{s,nr}(G,\Z/\ell^n)$ is also  
$g$-unramified.

Conversely, let $\nu\in \DVal_K$ be a divisorial valuation and $X$ a normal projective
model of $K=k(V/G)$ such that $\nu$ is realized by a divisor  $D\subset X$ and 
both $D, X$ are smooth at the generic point of $D$. Let $D^*$ be a formal neighborhood of this point. 
The map $V\to V/G$ defines a $G$-extension of the completion $K_{\nu}$.
Geometrically, this  
corresponds to a union of finite coverings of formal neighborhoods
of $D^*$, since $G$ has order coprime to $p$: 
the preimage of $D^*$ in $\bar V$ 
is a finite union of smooth formal neighborhoods $D_i^*$
of irreducible divisors $D_i\subset \bar V$.
If the covering $\pi_i: D_i^*\to D$ is unramified at the generic point
of $D_i$ then $\delta_\nu(\alpha)=0$. 
On the other hand, if there is ramification, then
there is a $g\in G$ which acts trivially on some $D_i$, 
and we may assume that $g$ is a generator of a cyclic subgroup acting trivially on $D_i$.  
Consider the subgroup of $G$ which preserves $D_i$ and acts linearly on the
normal bundle of $D_i$. This group is a subgroup of $Z(g)$;
hence there is a $Z(g)$-equivariant map $D_i^*\to V$ for some faithful
linear representation of $Z(g)$ such that $\alpha$ on $D_i^*/Z(g)$ is induced
from $V/Z(g)$.
In particular, if
$\alpha\in \rH^*_{s,nr}(Z(g),\Z/\ell^n)$ then $\delta_\nu(\alpha) =0$.
Thus an element which is unramified for any 
$g\in G$ in $\rH_s^*(G,\Z/\ell^n)$ is unramified.
\end{proof}

The considerations above allow to {\em linearize} the construction of {\em all} finite cohomological 
obstructions to rationality. 

\begin{coro} 
Let 
$$
1\ra Z\ra G^c\ra G^a\ra 1
$$
be a central extension, $g\in G^a$ a nontrivial element, and  $\tilde{g}$ 
a lift of $g$ to $G^c$. 
Then $Z(\tilde{g})$ is a sum of liftable abelian subgroups $\sigma_i$ containing $g$.
\end{coro}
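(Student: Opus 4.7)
The plan is to use centrality of $Z$ in $G^c$ to reduce the statement to a linear-algebraic claim about the canonical commutator pairing $\lambda\colon \wedge^2(G^a)\to Z$. Since $Z$ is central, for any $x\in G^c$ the commutator $[x,\tilde g]$ depends only on the image $h=\pi_c(x)\in G^a$ and on $g$, and equals $\lambda(h\wedge g)\in Z$. Hence the centralizer of $\tilde g$ is the $\pi_c$-preimage of
$$
C_g := \{\, h\in G^a \mid \lambda(g\wedge h)=0\,\}\ \subset\ G^a,
$$
and this preimage already contains $Z$ in its entirety. So the corollary is equivalent to covering $C_g$ by liftable abelian subgroups of $G^a$ each containing $g$.

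Next, for each $h\in C_g$ I would take the obvious candidate $\sigma_h:=\langle g,h\rangle \subset G^a$. This is automatically abelian, contains $g$, and its exterior square $\wedge^2(\sigma_h)$ is generated (as an abelian group) by the single element $g\wedge h$. By the defining property of $C_g$ one has $\lambda(g\wedge h)=0$, so the restriction $\lambda|_{\wedge^2(\sigma_h)}$ vanishes identically. This is exactly the liftability criterion used throughout the paper: the preimage $\pi_c^{-1}(\sigma_h)\subset G^c$ is abelian, and in particular every element of it commutes with $\tilde g$.

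Finally, I would assemble the pieces. As $h$ ranges over $C_g$ the subgroups $\sigma_h$ obviously cover $C_g$, so passing to preimages gives
$$
Z(\tilde g)\ =\ \bigcup_{h\in C_g}\pi_c^{-1}(\sigma_h),
$$
and each $\pi_c^{-1}(\sigma_h)$ is an abelian subgroup whose image $\sigma_h\subset G^a$ is liftable and contains $g$. This is the asserted decomposition.

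The only subtlety I anticipate is the treatment of torsion in $\sigma_h$: if $G^a$ has torsion (as is permitted here, since $G^c$ is a general finite $\ell$-group), then $\wedge^2(\sigma_h)$ for a two-generated abelian group must be handled with care to conclude that $g\wedge h$ generates it. However, for a 2-generated abelian group $\sigma_h$ the exterior square $\wedge^2(\sigma_h)$ is indeed cyclic, generated by $g\wedge h$, so the vanishing of $\lambda(g\wedge h)$ forces $\lambda$ to vanish on all of $\wedge^2(\sigma_h)$. This is the only nontrivial algebraic input, and it is standard; the rest of the argument is a formal consequence of centrality of $Z$.
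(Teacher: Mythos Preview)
Your argument is correct and is exactly the intended one: the paper states this corollary without proof, and the reduction via centrality of $Z$ to the commutator pairing $\lambda$, followed by covering $C_g=\{h:\lambda(g\wedge h)=0\}$ by the two-generated liftable subgroups $\sigma_h=\langle g,h\rangle$, is the natural (and essentially only) route. The torsion worry you flag is harmless, since $\wedge^2$ is right exact and hence $\wedge^2\langle g,h\rangle$ is a cyclic quotient of $\wedge^2(\Z^2)$ generated by $g\wedge h$.
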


\begin{lemm} 
An element in the image of 
$\rH^*(G^a,\Z/\ell^n) \subset \rH^*_{s,nr}(G^c,\Z/\ell^n)$ 
is $\tilde{g}$-unramified for a primitive element $g$ 
if and only if its restriction to $Z(\tilde{g})$ 
is induced from $Z(\tilde{g})/\langle g\rangle$.
\end{lemm}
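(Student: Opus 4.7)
My plan is to reduce the statement to a fiberwise Gysin/residue computation on the liftable summands of $Z(\tilde g)$, using the corollary that immediately precedes this lemma.

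First, by that corollary the image of $Z(\tilde g)$ in $G^a$ is a sum $\sum_i \sigma_i$ of liftable abelian subgroups $\sigma_i\subset G^a$ all containing $g$. Hence for any $\alpha\in \rH^*(G^a,\Z/\ell^n)$ the restriction $\alpha|_{Z(\tilde g)}$ is completely controlled by the restrictions $\alpha|_{\sigma_i}$, and ``$\alpha|_{Z(\tilde g)}$ is induced from $Z(\tilde g)/\langle g\rangle$'' is equivalent to ``for every $i$ the class $\alpha|_{\sigma_i}$ is induced from $\sigma_i/\langle g\rangle$''. Since $g$ is primitive in $G^a$, the inclusion $\langle g\rangle\hookrightarrow\sigma_i$ splits, and because $\sigma_i$ is abelian of $\ell$-type the K\"unneth formula produces a canonical decomposition
\[
\rH^*(\sigma_i,\Z/\ell^n)\;\simeq\;\rH^*(\sigma_i/\langle g\rangle,\Z/\ell^n)\;\oplus\;\chi_g\cdot \rH^{*-1}(\sigma_i/\langle g\rangle,\Z/\ell^n),
\]
where $\chi_g\in \rH^1(\langle g\rangle,\Z/\ell^n)$ is the character dual to $g$. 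The ``induced'' condition is then exactly the vanishing of the projection onto the second summand, i.e.\ of an algebraic residue $\partial_g(\alpha|_{\sigma_i})$.

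Second, I would match this algebraic residue with the geometric one defining $\tilde g$-unramifiedness. Fix a faithful representation $V$ of $G^c$, set $K=k(V/G^c)$, and let $\nu\in\DVal_K$ be a divisorial valuation whose inertia in $G^c$ is generated by (a conjugate of) $\tilde g$. The argument in the proof of Lemma~\ref{lemm:subring} shows that such a $\nu$ is realized by a $Z(\tilde g)$-invariant formal neighborhood $D^*$ of a divisor fixed pointwise by $\tilde g$, and that $\alpha$ on $D^*/Z(\tilde g)$ is induced from a faithful linear $Z(\tilde g)$-action. In that linear/torus model, the inertia at $D^*$ lies in $\langle g\rangle\subset\sigma_i$ for one of the liftable summands, and the standard Gysin sequence for $V\to V/\langle g\rangle$ identifies the divisorial residue $\delta_\nu$ on $\rH^*(\sigma_i,\Z/\ell^n)$ with the contraction $\partial_g$ against $\chi_g$. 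This is the torus-action computation that underlies Proposition~\ref{prop:stab-abel} and the geometric arguments of \cite{bog-linear}. Combining this with the first step, $\delta_\nu(\alpha)=0$ for every $\nu$ with $g$-inertia is equivalent to $\partial_g(\alpha|_{\sigma_i})=0$ for every $i$, which in turn is equivalent to $\alpha|_{Z(\tilde g)}$ being pulled back from $Z(\tilde g)/\langle g\rangle$.

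The main obstacle is the second step: one has to verify that for classes pulled back from $\rH^*(G^a,\Z/\ell^n)$, the genuine divisorial residue at a valuation with inertia $\langle g\rangle$ coincides with the purely algebraic Gysin contraction against $\chi_g$. The subtlety is that the reduction in Lemma~\ref{lemm:subring} only produces an equivariant map $D_i^*\to V'$ for \emph{some} faithful linear $Z(\tilde g)$-representation $V'$, and one must check that the residue is independent of this choice and respects the direct-sum decomposition $Z(\tilde g)=\sum\sigma_i$. Primitivity of $g$ makes the relevant splittings $\sigma_i=\langle g\rangle\oplus\sigma_i/\langle g\rangle$ canonical, and the torus-quotient argument then applies uniformly across the $\sigma_i$; once this compatibility is in place, both directions of the equivalence follow by summing the local residue computations over $i$.
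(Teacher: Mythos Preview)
Your reduction in Step~1 has a genuine gap. The image of $Z(\tilde g)$ in $G^a$ is the \emph{sum} $\sum_i\sigma_i$, not a direct sum, so restriction from $Z(\tilde g)^a$ to the individual $\sigma_i$ loses information. In particular the equivalence you assert,
\[
\bigl(\alpha|_{Z(\tilde g)}\text{ induced from }Z(\tilde g)/\langle g\rangle\bigr)
\ \Longleftrightarrow\
\bigl(\forall i,\ \alpha|_{\sigma_i}\text{ induced from }\sigma_i/\langle g\rangle\bigr),
\]
fails in one direction. Take $Z(\tilde g)^a=\langle g,h_1,h_2\rangle$ with $\sigma_1=\langle g,h_1\rangle$, $\sigma_2=\langle g,h_2\rangle$, and $\alpha=\chi_g\cup\chi_{h_1}\cup\chi_{h_2}$. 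Then $\alpha|_{\sigma_1}=\alpha|_{\sigma_2}=0$, yet $\partial_g\alpha=\chi_{h_1}\cup\chi_{h_2}\neq 0$ in $\rH^*(Z(\tilde g)^a)$, so $\alpha$ is not pulled back from $Z(\tilde g)^a/\langle g\rangle$. You might hope that such classes die after stabilization (indeed $\chi_{h_1}\cup\chi_{h_2}$ lies in the ideal $IH$ when $\langle h_1,h_2\rangle$ is not liftable), but then you are no longer working on the $\sigma_i$ separately --- you are using the full relation ideal of $Z(\tilde g)$, which is exactly what you tried to avoid by decomposing. Your Step~2, the residue matching you flag as the main obstacle, is then built on this unstable foundation.

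The paper's argument bypasses the decomposition entirely. It simply observes that $Z(\tilde g)$ is again a central extension of its abelianization, so the machinery of Theorem~\ref{thm:bkk} and Proposition~\ref{prop:stab-partial} applies to $Z(\tilde g)$ directly: the stabilization map from $\rH^*(Z(\tilde g)^a)$ has kernel the ideal $IH_K(n)$, and the unramified condition translates immediately into the ``induced from the quotient by $\langle g\rangle$'' condition without any explicit Gysin computation on the pieces $\sigma_i$. One direction is declared clear; the other is this one structural remark. Your route is not wrong in spirit, but to make it work you would have to pass to stable cohomology of $Z(\tilde g)$ \emph{before} attempting any splitting, at which point you have essentially rejoined the paper's argument.
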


\begin{proof} 
One direction is clear. 
Conversely,  $Z(\tilde{g})$ is a central extension of its abelian quotient.
Hence the stabilization homomorphism coincides with the quotient by the ideal $IH_K(n)$ 
(see the proof of Theorem~\ref{thm:bkk}). 
\end{proof}

\begin{coro}
The subring $\rH^*_{s,nr}(G^a,\Z/\ell^n)\subset \rH^*_s(G^a,\Z/\ell^n)$
is defined by $\Sigma$, i.e., by the configuration of liftable subgroups $\sigma_i$.
\end{coro}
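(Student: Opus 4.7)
The plan is to chain together the three immediately preceding results. Let $\alpha$ be an element in the image of $\rH^*(G^a,\Z/\ell^n)$ in $\rH^*_s(G^c,\Z/\ell^n)$. By Lemma~\ref{lemm:subring}, $\alpha$ is unramified if and only if it is $g$-unramified for every $g \in G^c$. By the Lemma immediately above, $\tilde{g}$-unramifiedness of such $\alpha$ is in turn equivalent to the restriction $\alpha|_{Z(\tilde{g})}$ being inflated from $Z(\tilde{g})/\langle g\rangle$, whenever $g$ is primitive in $G^c$.

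Next, I would invoke the Corollary situated between those two results: the centralizer $Z(\tilde{g})$ decomposes as a sum of the liftable abelian subgroups $\sigma_i \in \Sigma$ containing the image of $g$ in $G^a$. Consequently the condition that $\alpha|_{Z(\tilde{g})}$ be inflated from $Z(\tilde{g})/\langle g\rangle$ factors into a family of conditions, one per such $\sigma_i$, each demanding that $\alpha|_{\sigma_i}$ be inflated from $\sigma_i/\langle g\rangle$. Each individual condition refers solely to (i) a subgroup $\sigma_i \subset G^a$ recorded in $\Sigma$, (ii) a primitive cyclic subgroup $\langle g\rangle \subset \sigma_i$, and (iii) the restriction/inflation maps between the cohomologies of these abelian groups — data completely intrinsic to the pair $(G^a,\Sigma)$.

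Collecting these conditions over all $\sigma \in \Sigma$ and all primitive cyclic subgroups $\langle g\rangle \subset \sigma$ cuts out $\rH^*_{s,nr}(G^a,\Z/\ell^n)$ inside $\rH^*_s(G^a,\Z/\ell^n)$ using only the configuration $\Sigma$, which is exactly the claim. The main point requiring care is showing that no extra ramification condition is imposed by an element $g \in G^c$ that fails to lie in any liftable subgroup — but this is precisely the content of the Corollary recalled above, which identifies $Z(\tilde{g})$ with the span of the $\sigma_i \in \Sigma$ through $g$, so every relevant centralizer, and therefore every residue test, is already visible in $\Sigma$. The remaining routine verification is that the functorial restriction and inflation maps behave coherently along the inclusions $\sigma_i \subset Z(\tilde{g})$, so that the per-$\sigma_i$ conditions genuinely reassemble to the global inflation condition on $Z(\tilde{g})$; this follows from standard functoriality for abelian group cohomology.
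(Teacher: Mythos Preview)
Your proposal is correct and follows precisely the intended argument: the paper states this corollary without proof, as it is meant to be read off directly from the three preceding results (Lemma~\ref{lemm:subring}, the corollary on $Z(\tilde g)$ as a sum of liftable $\sigma_i$, and the lemma characterizing $\tilde g$-unramifiedness via inflation from $Z(\tilde g)/\langle g\rangle$), which is exactly the chain you assemble. The only point worth tightening is the factorization step: since $\alpha$ lies in the image of $\rH^*(G^a)$, its restriction to $Z(\tilde g)$ already factors through the abelian image of $Z(\tilde g)$ in $G^a$, so the inflation condition is detected on the abelian sum $\sum\sigma_i$ and hence on each $\sigma_i$ separately --- this is what makes your ``standard functoriality'' remark go through.
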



Such cohomological obstructions were considered
by Colliot-Th\'el\`ene and Ojanguren in \cite{ct}, 
where they showed that unramified cohomology 
is an invariant under stable birational equivalence. 
In addition, they produced explicit examples of nontrivial obstructions
in dimension 3. Subsequently, Peyre \cite{peyre1}, \cite{peyre2} 
gave further examples with  $n=3$ and $n=4$
(see also \cite{saltman-h30}, \cite{saltman-h3}).
Similarly to the examples with nontrivial  $\rH^2_{nr}(G)$ in \cite{bog-87}, 
one can construct examples with nontrivial higher cohomology
using as the only input
the combinatorics of the set of liftable subgroups  $\Sigma=\Sigma(G^c)$ for suitable
{\em central extensions} $G^c$.
Since we are interested in function fields $K=k(V/G^c)$ with
trivial $\rH^2_{nr}(K)$, we are looking for groups $G^c$ with
$\rR(G)=\rR_{\wedge}(G)$. Such examples can be found 
by working with analogs of {\em quaternionic} structures on 
linear spaces $G^a=\F_{\ell}^{4n}$, for $n\in \N$.


\section{What about curves?}
\label{sect:curves}

In this section we focus on anabelian geometry of curves over finite fields. By 
Uchida's theorem  (see Theorem~\ref{thm:neukirch}), a curve over 
$k=\F_q$ is uniquely determined by its absolute Galois group. 
Recently, Saidi--Tamagawa proved the {\bf Isom}-version
of Grothendieck's conjecture for the prime-to-characteristic 
geometric fundamental (and absolute Galois) groups
of hyperbolic curves \cite{saidi-tamagawa} 
(generalizing results of Tamagawa and Mochizuki
which dealt with the full groups). 
A {\bf Hom}-form  appears in their recent preprint \cite{saidi-tamagawa09}. 
The authors are interested in {\em rigid} homomorphisms of 
{\em full} and {\rm prime-to-characterstic} 
Galois groups of function fields of curves. 
Modulo passage to open subgroups, a homomorphism
$$
\Psi\colon G_K\ra G_{L}
$$
is called rigid if it preserves the {\em decomposition} subgroups, i.e., 
if for all $\nu\in \DVal_K$
$$
\Psi(D_{\nu})=D_{\nu'},
$$
for some $\nu'\in \DVal_{L}$. 
The main result is that there is a bijection between {\em admissible} homomorphisms 
of fields and rigid homomorphisms of Galois groups
$$
\Hom^{\rm adm}(L, K) \stackrel{\sim}{\lra} \Hom^{\rm rig}(G_K, G_{L})/\sim,
$$ 
modulo conjugation (here {\em admissible} essentially 
means that the extension of function fields $K/L$
is finite of degree coprime to the 
characteristic, see \cite[p. 3]{saidi-tamagawa09}
for a complete description of this notion).    

\

Our work on higher-dimensional anabelian geometry led us to consider homomorphisms
of Galois groups preserving {\em inertia} subgroups.

\begin{theo}
\cite{bt-torelli}
\label{thm:torelli}
Let $K=k(X)$ and $L=l(Y)$ be function fields of curves
over algebraic closures of finite fields. 
Assume that $\mathsf g(X) > 2$ and that 
$$
\Psi \colon G_K^a \ra G^a_L
$$
is an isomorphism of abelianized absolute Galois groups
such that for all $\nu\in \DVal_K$ there exists a $\nu'\in \DVal_L$ with 
$$
\Psi(I^a_{\nu})=I^a_{\nu'}.
$$
Then $k=l$ and the corresponding Jacobians are isogenous.  
\end{theo}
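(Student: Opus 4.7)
My plan is the following. Since $X$ and $Y$ are curves over algebraically closed fields, divisorial valuations of $K$ and $L$ are in canonical bijection with closed points $X(k)$ and $Y(l)$, so the hypothesis $\Psi(I^a_\nu)=I^a_{\nu'}$ yields a canonical bijection $\phi\colon X(k)\to Y(l)$. First I would check that $k=l$: the characteristic $p$ of $k$ is recovered from $(G^a_K,\{I^a_\nu\})$ as the unique prime for which the procyclic group $I^a_\nu$ has trivial $p$-component (reflecting the absence of wild or Artin--Schreier inertia over an algebraically closed residue field $k=\bar{\F}_p$).

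Next I would exploit the divisor--Jacobian dictionary. Applying $\Hom(-,\hat{\Z}')$, where $\hat{\Z}'=\prod_{\ell\neq p}\Z_\ell$, to the exact sequence $0\to K^\times/k^\times\to \Div(X)\to \Pic(X)\to 0$ yields
\[
0\to \hat{\Z}'\to \prod_{x\in X(k)}\hat{\Z}'\to G^a_K\to T(J_X)\to 0,
\]
in which the $x$-th factor maps onto $I^a_x$ and $T(J_X)=\prod_{\ell\neq p}T_\ell(J_X)$ is the total Tate module of the Jacobian. Since $\Psi$ preserves each individual $I^a_x$, it preserves the closed subgroup generated by all inertias, and therefore descends to an isomorphism $\bar{\Psi}\colon T(J_X)\stackrel{\sim}{\to} T(J_Y)$ of $\hat{\Z}'$-modules; in particular $\dim J_X=\dim J_Y$. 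Choosing local-valuation generators of each $I^a_x$ gives $\Psi(v_x)=c_x\cdot v_{\phi(x)}$ for units $c_x\in (\hat{\Z}')^\times$, and the degree relation $\sum_x v_x\equiv 0$ on $K^\times/k^\times$ forces the vector $(c_{\phi^{-1}(y)})_y$ to annihilate the image of $L^\times/l^\times$ in $\prod_y\hat{\Z}'$, giving rigid constraints on these twists.

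The main obstacle is to upgrade $\bar{\Psi}$ from a $\hat{\Z}'$-module isomorphism to a Frobenius-equivariant isomorphism, which is what is needed to deduce an isogeny of Jacobians over $\bar{\F}_p$. I would descend $X,Y$ to models $X_0,Y_0$ over a common finite field $\F_q$; the point counts $\#X_0(\F_{q^n})$ and $\#Y_0(\F_{q^n})$ are then non-degenerate linear recurrence sequences in $n$ governed by the eigenvalues of Frobenius on the respective Tate modules. Matching principal divisors through $\phi$, together with the compatibility of $(c_x)$ with the Frobenius-orbit decomposition of $X(k)$ (each orbit yielding a principal divisor whose image under $\phi$ must again be principal), produces a divisibility relation between these two recurrences for all sufficiently large $n$.

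At this point I would apply the Corvaja--Zannier theorem on divisibility of values of non-degenerate linear recurrence sequences: such divisibility forces the characteristic polynomials of Frobenius on $T_\ell(J_{X_0})$ and $T_\ell(J_{Y_0})$ to agree up to roots of unity, and Tate's isogeny theorem then yields $J_{X_0}\sim J_{Y_0}$ over $\F_q$, hence $J_X\sim J_Y$ over $\bar{\F}_p$. The hard part is exactly this final arithmetic step: no purely Galois-cohomological or abelian manipulation can extract Frobenius-equivariant information from the bare data $(G^a_K,\{I^a_\nu\})$, and Corvaja--Zannier supplies precisely the input that promotes a set-theoretic bijection of closed points with scalar twists into an honest isogeny statement.
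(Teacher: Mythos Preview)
Your overall architecture matches the paper's: pass from $(\G^a_K,\{I^a_\nu\})$ to the pair $(J(k),C(k))$ together with a bijection $C(k)\leftrightarrow \tilde C(\tilde k)$, then manufacture a divisibility $\#J(k_n)\mid \#\tilde J(\tilde k_n)$ between point counts over a tower of finite subfields, and finish with Corvaja--Zannier (Theorem~\ref{thm:corv}) plus Tate's isogeny theorem (Theorem~\ref{thm:ab-divi}). The paper formulates the middle step as Theorem~\ref{thm:torel}, working with the torsion group $J(k)$ rather than the Tate module; your dual set-up via $T(J_X)$ is equivalent.

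The genuine gap is in how you produce the divisibility. Your assertion that ``each Frobenius orbit yields a principal divisor whose image under $\phi$ must again be principal'' is false: a Frobenius orbit on $X(\bar\F_p)$ is a closed point of the model $X_0/\F_q$, and the corresponding effective divisor has positive degree, hence is never principal. Nothing in the data $(J(k),C(k))$ directly records the Frobenius-orbit decomposition, so you cannot read off $\#X_0(\F_{q^n})$ this way. The paper's mechanism is different and is the real content of the argument: one reconstructs the filtration $J(k_1)\subset J(k_2)\subset\cdots$ (degree-$2$ tower) \emph{intrinsically} from the pair $(J(k),C(k))$. For $k_1$ large enough, $C(k_1)$ generates $J(k_1)$; then $J(k_n)$ is characterized as the subgroup generated by those $c\in C(k)$ admitting some $c'\in C(k)$ with $c+c'\in J(k_{n-1})$. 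This description uses only the group law on $J(k)$ and the subset $C(k)$, both transported by $\Psi$, whence $\Psi(J(k_n))\subseteq \tilde J(\tilde k_n)$ and the required divisibility. You correctly isolated Corvaja--Zannier as the decisive arithmetic input, but without this intrinsic-tower step there is no recurrence relation to feed into it.
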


This theorem is a Galois-theoretic incarnation of a finite field 
version of the ``Torelli'' theorem for curves. 
Classically, the setup is as follows: let $k$ be any field and $C/k$ a smooth curve over 
$k$ of genus $\mathsf g(C)\ge 2$, with $C(k)\neq \emptyset$. 
For each $n\in \N$, let $J^n$ be Jacobian of rational equivalence classes of 
degree $n$ zero-cycles on $C$. Put $J^0=J$. 
We have  

\

\centerline{
\xymatrix{
 C^n\ar[r]               & \Sym^n(C)  \ar[r]^{\,\,\,\,\,\,\,\,\lambda_n} &  J^n 
}
}

\

\noindent
Choosing a point $c_0\in C(k)$, we may identify $J^n=J$. 
The image ${\rm Image}(\lambda_{\mathsf g-1})=\Theta\subset J$ is called the theta divisor.
The Torelli theorem asserts that the pair $(J,\Theta)$ determines $C$, up to isomorphism.

\begin{theo} \cite{bt-torelli}
\label{thm:torel}
Let $C,\tilde{C}$ be smooth projective curves of genus 
$\mathsf g \ge 2$ over closures of finite fields $k$ and $\tilde{k}$. 
Let 
$$
\Psi \colon J(k)\stackrel{\sim}{\lra} \tilde{J}(\tilde{k})
$$
be an isomorphism of abelian groups inducing a bijection of sets
$$
C(k)\leftrightarrow \tilde{C}(\tilde{k}).
$$
Then $k=\tilde{k}$ and $J$ is isogenous to $\tilde{J}$. 
\end{theo}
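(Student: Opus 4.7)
The plan is to reduce to Tate's isogeny theorem: it suffices to show that $|J(\mathbb{F}_{p^n})| = |\tilde J(\mathbb{F}_{p^n})|$ for all $n \ge 1$, equivalently (by Weil's formulas relating the zeta function of $C$ and the characteristic polynomial of Frobenius on $J$) that $|C(\mathbb{F}_{p^n})| = |\tilde C(\mathbb{F}_{p^n})|$ for all $n$. So the target is to extract, from the abstract data $(\Psi, C(k) \leftrightarrow \tilde C(\tilde k))$, equality of these two recurrence sequences.

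First I would identify the characteristic. As an abstract abelian group,
$$
J(\overline{\mathbb{F}_p}) \;\simeq\; (\Q_p/\Z_p)^{r} \,\oplus\, \bigoplus_{\ell\neq p}(\Q_\ell/\Z_\ell)^{2\mathsf g},
$$
where $r\le \mathsf g$ is the $p$-rank of $J$. The prime $p$ is characterized as the unique prime $\ell$ for which $\dim_{\F_\ell}J(k)[\ell]$ falls short of $2\mathsf g$. Hence the group-isomorphism $\Psi$ forces $p=\tilde p$, and $k=\tilde k$.

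Next I would organize the data carried by the curve bijection. For any finite subgroup $H\subset J(k)$, $\Psi$ produces a bijection $C(k)\cap H \leftrightarrow \tilde C(\tilde k)\cap \Psi(H)$; in particular the cardinalities agree. The finite groups $J(\mathbb F_{p^n})$ form a natural filtration of $J(k)$ with $J(\mathbb F_{p^n})\cap C(k)=C(\mathbb F_{p^n})$, and similarly on the $\tilde C$ side. The crucial obstacle is that $\Psi$, being only an abelian-group isomorphism, does not a priori carry $J(\mathbb F_{p^n})$ to $\tilde J(\mathbb F_{p^n})$: the Frobenius filtration is not intrinsic to the abstract group structure. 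To circumvent this I would use the Weil bound: among all finite subgroups $H\subset J(k)$ of a given order $N$, those of the form $J(\mathbb F_{p^n})$ are distinguished by containing an unusually large number of curve points, roughly $N^{1/\mathsf g}(1+o(1))$. Applying this criterion on both sides, $\Psi$ matches (up to a controlled error) the Frobenius subgroups of $J$ with those of $\tilde J$, and propagates the counts $|C(\mathbb F_{p^n})|$ and $|\tilde C(\mathbb F_{p^n})|$ into an approximate equality, or at least into non-trivial divisibility relations between the two recurrence sequences $N_n=|C(\mathbb F_{p^n})|$ and $\tilde N_n=|\tilde C(\mathbb F_{p^n})|$.

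At this stage the Corvaja--Zannier theorem on greatest common divisors of non-degenerate linear recurrence sequences (an $S$-unit/subspace-theorem type rigidity) takes over. The sequences $N_n$, $\tilde N_n$ are power sums in the Frobenius eigenvalues, and the hypothesis $\mathsf g(C)>2$ is used precisely to ensure non-degeneracy and to rule out pathological coincidences; Corvaja--Zannier then forces a multiplicative relation between the two families of Weil numbers strong enough to conclude $N_n=\tilde N_n$ for all $n$. Tate's isogeny theorem finishes the proof, yielding $J\sim\tilde J$.

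The main obstacle is the middle step: passing from a purely set-theoretic bijection preserving no obvious Frobenius structure to a precise matching of the Frobenius filtrations. Recovering $J(\mathbb F_{p^n})$ group-theoretically inside $J(k)$ is impossible in the strict sense, but one expects that the geometric concentration of curve points, combined with the Corvaja--Zannier rigidity for the resulting recurrence sequences, is just strong enough to transfer the correct count across $\Psi$.
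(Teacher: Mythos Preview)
Your overall architecture matches the paper's: identify the characteristic, transfer the Frobenius filtration across $\Psi$, extract a relation between the point-count sequences, and finish with Corvaja--Zannier plus Tate. Your identification of the characteristic via the $\ell$-torsion rank is fine (in the paper the equality of characteristics simply falls out of Theorem~\ref{thm:ab-divi} at the end).

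The middle step, however --- which you rightly flag as the main obstacle --- is a genuine gap in your proposal. The Weil-bound heuristic (``Frobenius subgroups are those containing unusually many curve points'') is not sharp enough to isolate $J(\F_{p^n})$ among all finite subgroups of $J(k)$ of a given order, and you give no argument beyond ``one expects'' it to work. The paper's device here is different and concrete. One climbs a tower of \emph{quadratic} extensions $k_1\subset k_2\subset\cdots$ and uses a recursive characterization: for $n\ge 2$, the group $J(k_n)$ is generated by those curve points $c\in C(k)$ for which some other curve point $c'\in C(k)$ satisfies $c+c'\in J(k_{n-1})$. (For $c\in C(k_n)$, take $c'=c^{\sigma}$ with $\sigma$ the nontrivial element of $\Gal(k_n/k_{n-1})$; then $c+c'$ is $\sigma$-invariant. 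The converse direction, and the fact that $C(k_1)$ generates $J(k_1)$ once $\#k_1\ge c\,\mathsf g^2$, are the nontrivial inputs from \cite{bt-torelli}.) This description uses only the abelian group $J(k)$ and the subset $C(k)$, so it transports under $\Psi$. Starting from a sufficiently large $k_1$ with $\Psi(J(k_1))\subset\tilde J(\tilde k_1)$, one gets $\Psi(J(k_n))\subset\tilde J(\tilde k_n)$ for all $n$, hence
\[
\#J(k_n)\mid\#\tilde J(\tilde k_n)\quad\text{for all }n.
\]
Note this is only \emph{divisibility}, not the equality you were aiming for; Theorem~\ref{thm:ab-divi} (itself an application of Corvaja--Zannier to the recurrence $\#A(k_n)=\prod_j(\alpha_j^n-1)$) shows that divisibility along infinitely many $n$ already forces the isogeny.

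One minor point: you invoke the hypothesis $\mathsf g>2$, but Theorem~\ref{thm:torel} assumes only $\mathsf g\ge 2$; the stronger bound belongs to the Galois-theoretic Theorem~\ref{thm:torelli}. The paper's sketch assumes $C$ nonhyperelliptic of genus $\ge 3$ for simplicity, which is what makes the generation and recursion steps work cleanly.
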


We expect that the curves $C$ and $\tilde{C}$ are isomorphic over $\bar{k}$. 

\

Recall that
$$
J(\bar{\F}_p)= \text{$p$-part}\oplus 
\bigoplus_{\ell\neq p} (\Q_\ell/\Z_{\ell})^{2\mathsf g}.
$$
The main point of Theorem~\ref{thm:torel} is that the 
set $C(\bar{\F}_p)\subset J(\bar{\F}_p)$ {\em rigidifies} this very large torsion abelian group. 
Moreover, we have

\begin{theo} \cite{bt-torelli}
There exists an $N$, bounded effectively in terms of $\mathsf g$, 
such that
$$
\Psi(\Frob)^N \quad \text{ and } \quad 
\widetilde{\Frob}^N
$$
(the respective Frobenius) commute, 
as automorphisms of $\tilde{J}(\tilde{k})$. 
\end{theo}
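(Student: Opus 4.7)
The plan is to reduce the commutation statement to a linear-algebra problem on $\ell$-adic Tate modules and then exploit the rigidity of the Abel--Jacobi embedded curve inside its Jacobian. Because $\Psi$ is an isomorphism of abstract abelian groups, it restricts for each $\ell \neq p$ to $\Z_\ell$-linear isomorphisms $\Psi_\ell\colon T_\ell J \xrightarrow{\sim} T_\ell \tilde J$. Setting $F := \Psi(\Frob)$ and $\tilde F := \widetilde{\Frob}$, both operators act $\Z_\ell$-linearly on $T_\ell \tilde J$, and the action of $F$ is the $\Psi_\ell$-conjugate of the action of $\Frob$ on $T_\ell J$. In particular, $F$ and $\Frob|_{T_\ell J}$ share a Weil characteristic polynomial.

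By Theorem~\ref{thm:torel} the Jacobians $J$ and $\tilde J$ are isogenous over $\bar{\F}_p$, hence isogenous over a finite extension $\F_{p^{N_0}}$ of the common minimal field of definition of $C$ and $\tilde C$, with $N_0$ effectively bounded in terms of $\mathsf g$ via standard bounds on the Galois action on the finite set of $\bar{\F}_p$-isogenies between $J$ and $\tilde J$. After passing to $N_0$-th powers we may therefore assume that $F^{N_0}$ and $\tilde F^{N_0}$ act on every $T_\ell \tilde J$ with the same Weil polynomial $P(t) \in \Z[t]$. Choosing an $\F_{p^{N_0}}$-isogeny $\phi\colon J \to \tilde J$ yields moreover a $\Q_\ell$-linear isomorphism $\phi_\ell\colon T_\ell J \otimes \Q_\ell \to T_\ell \tilde J \otimes \Q_\ell$ intertwining $\Frob^{N_0}$ with $\tilde F^{N_0}$; hence the composite $\phi_\ell \circ \Psi_\ell^{-1}$ conjugates $F^{N_0}$ to $\tilde F^{N_0}$ inside $\End(T_\ell \tilde J) \otimes \Q_\ell$.

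At this point I would invoke the additional geometric constraint that both $F$ and $\tilde F$ preserve the set $\tilde C(\tilde k) \subset \tilde J(\tilde k)$. For $\mathsf g \geq 2$, the Matsusaka--Ran rigidity of the Abel--Jacobi embedding asserts that any algebraic automorphism of $\tilde J$ preserving a translate of $\tilde C$ is the composition of a translation and an element of $\Aut(\tilde C)$; the latter group has order bounded effectively in terms of $\mathsf g$ (the Hurwitz bound $84(\mathsf g-1)$ in characteristic zero, and its positive-characteristic analogue of Stichtenoth). Combined with the Tate-module description placing $F^{N_0}$ conjugate to $\tilde F^{N_0}$, and with the rigidifying presence of the divisible subset $\tilde C(\tilde k)$ inside $\tilde J(\tilde k)$, this will force $F^{N_0}$ and $\tilde F^{N_0}$ to generate a common commutative subalgebra of $\End(T_\ell \tilde J) \otimes \Q_\ell$ after a further bounded power, yielding an effective $N = N(\mathsf g)$ such that $F^N \tilde F^N = \tilde F^N F^N$ on each Tate module. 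The conclusion transfers to the ambient group $\tilde J(\tilde k) = (\text{$p$-part}) \oplus \bigoplus_{\ell \neq p} (\Q_\ell / \Z_\ell)^{2\mathsf g}$, with the $p$-primary part handled in parallel via the same isogeny.

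The principal obstacle will be rigorously passing from the Tate-module description of $F^{N_0}$ to an honest commutation relation with $\tilde F^{N_0}$: the Matsusaka--Ran rigidity applies in the algebraic category, whereas $F$ is only an abstract group automorphism. Closing this gap requires showing that the subset $\tilde C(\tilde k) \subset \tilde J(\tilde k)$ is sufficiently rigid, through its divisibility structure and the point-count recurrences underlying Theorem~\ref{thm:torel} (where the Corvaja--Zannier divisibility theorem enters), to force any group automorphism of $\tilde J(\tilde k)$ preserving $\tilde C(\tilde k)$ and sharing a Weil polynomial with a power of $\tilde F$ to lie, up to an effective further power, in the commutant of $\tilde F$.
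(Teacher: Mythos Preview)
The paper does not prove this statement; it is cited from \cite{bt-torelli} without argument, so there is no proof here to compare against.

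On its own terms your proposal is incomplete, and the obstacle you name in your last paragraph is the real one. Everything through the conjugacy of $F^{N_0}$ and $\tilde F^{N_0}$ in $\GL(T_\ell\tilde J\otimes\Q_\ell)$ is fine, but conjugacy of semisimple operators (equivalently, equality of characteristic polynomials) does not imply commutation: any two distinct conjugates of a non-scalar diagonal matrix give a counterexample. Matsusaka--Ran rigidity is a statement about \emph{algebraic} automorphisms of the pair $(\tilde J,\tilde C)$ and says nothing about the abstract group automorphism $F=\Psi(\Frob)$; invoking it here is a category error, as you yourself note. Your final paragraph then simply restates what must be shown (``force \ldots\ to lie in the commutant of $\tilde F$'') without supplying a mechanism.

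The ingredient you are not using, and which the paper does make explicit in the sketch of the adjacent Theorem~\ref{thm:torel}, is Step~2 there: the Frobenius tower $J(k_1)\subset J(k_2)\subset\cdots$ is characterized \emph{intrinsically} by the pair $(J(k),C(k))$, since $J(k_n)$ is generated by those $c\in C(k)$ admitting $c'\in C(k)$ with $c+c'\in J(k_{n-1})$. Because $\Psi$ respects both the group law and the subset $C$, it carries this tower to the corresponding tower on the $\tilde J$ side, so after matching base levels $F$ and $\tilde F$ stabilize the \emph{same} exhausting filtration of $\tilde J(\tilde k)$ by finite subgroups, not merely filtrations with matching cardinalities. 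This is the concrete rigidity that replaces your appeal to Matsusaka--Ran. Turning it into commutation on Tate modules still requires further work (passing from a common invariant filtration to a common eigenspace decomposition, where semisimplicity of Frobenius and Tate's identification of the commutant of $\tilde F$ with $\End^0(\tilde J)\otimes\Q_\ell$ enter), but your outline never reaches this starting point.
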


In some cases, we can prove that the curves $C$ and $\tilde{C}$ are actually isomorphic, 
as algebraic curves. Could Theorem~\ref{thm:torel} hold with $k$ and $\tilde{k}$ replaced by $\C$?
Such an isomorphism $\Psi$ matches all ``special'' points and linear systems 
of the curves. Thus the problem may be amenable to techniques developed in \cite{hru-zilber}, 
where an algebraic curve is reconstructed from an abstract 
``Zariski geometry'' ({\em ibid.}, Proposition 1.1), 
analogously to the reconstruction of projective spaces
from an ``abstract projective geometry'' in Section~\ref{sect:projective}.

\

The proof of Theorem~\ref{thm:torel} has as its 
starting point the following sufficient condition 
for the existence of an isogeny:

\begin{theo}[\cite{bt-torelli}, \cite{bt-tate}] 
\label{thm:ab-divi}
Let $A$ and $\tilde{A}$ be abelian 
varieties of dimension $\mathsf g$ 
over finite fields $k_1$, resp. $\tilde{k}_1$ (of sufficiently divisible cardinality).
Let $k_n/k_1$, resp. $\tilde{k}_n/\tilde{k}_1$, be the unique extensions of degree $n$.
Assume that 
$$
\#A(k_n) \mid \# \tilde{A}(\tilde{k}_n)
$$
for infinitely many $n\in \N$. 
Then ${\rm char}(k)={\rm char}(\tilde{k})$ and 
$A$ and $\tilde{A}$ are isogenous over $\bar{k}$.  
\end{theo}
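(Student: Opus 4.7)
The plan is to extract from the divisibility hypothesis a matching of Frobenius characteristic polynomials (after base change to a common finite field) and then invoke Tate's isogeny theorem, with the intermediate step carried out via the Corvaja--Zannier theorem on divisibility of values of linear recurrence sequences alluded to in the Introduction. Writing $q_1=\#k_1$, $\tilde q_1=\#\tilde k_1$, and letting $\alpha_1,\dots,\alpha_{2\mathsf g}$ (resp.\ $\tilde\alpha_1,\dots,\tilde\alpha_{2\mathsf g}$) be the eigenvalues of the absolute Frobenius acting on an $\ell$-adic Tate module of $A$ (resp.\ $\tilde A$), the starting identity is Weil's formula
$$a_n := \#A(k_n) = \prod_{i=1}^{2\mathsf g}(1-\alpha_i^n), \qquad b_n := \#\tilde A(\tilde k_n) = \prod_{j=1}^{2\mathsf g}(1-\tilde\alpha_j^n),$$
with $|\alpha_i|=q_1^{1/2}$ and $|\tilde\alpha_j|=\tilde q_1^{1/2}$. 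Expanded, $a_n$ and $b_n$ are integer-valued linear recurrence sequences whose characteristic roots are the $2^{2\mathsf g}$ subset products $\prod_{i\in S}\alpha_i$, $\prod_{j\in T}\tilde\alpha_j$, for $S,T\subseteq\{1,\dots,2\mathsf g\}$.

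First I would invoke the Corvaja--Zannier theorem: if $a_n\mid b_n$ along an infinite set of $n$, then, after establishing the relevant non-degeneracy, the multiset of characteristic roots of $(a_n)$ is embedded in that of $(b_n)$. The ``sufficiently divisible cardinality'' hypothesis on $q_1,\tilde q_1$ serves precisely to allow restriction of $n$ to an arithmetic progression and to make the non-degeneracy conditions checkable. Combining this with the archimedean identities $|\prod_{i\in S}\alpha_i|=q_1^{|S|/2}$ and $|\prod_{j\in T}\tilde\alpha_j|=\tilde q_1^{|T|/2}$, unique prime factorization then forces $\ch(k)=\ch(\tilde k)=:p$, so that after identification both $A$ and $\tilde A$ live in characteristic $p$.

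Once the characteristic is matched, I would choose a finite field $\F\subset\bar{\F}_p$ containing both $k_N$ and $\tilde k_N$ for some $N$, base-change both abelian varieties to $\F$, and rephrase the root-matching as a divisibility $P_{A/\F}(T)\mid P_{\tilde A/\F}(T)$ of Frobenius characteristic polynomials in $\Z[T]$. Both polynomials are monic of degree $2\mathsf g$, so divisibility implies equality; Tate's isogeny theorem (quoted in the Introduction) then yields an isogeny $A\sim\tilde A$ over $\F$, a fortiori over $\bar{k}$.

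The hard part will be the root-matching step. The $2^{2\mathsf g}$ subset products $\prod_{i\in S}\alpha_i$ are generally \emph{not} distinct: the functional equation forces $\prod_{i\in S}\alpha_i\cdot\prod_{i\notin S}\alpha_i=\pm q_1^{\mathsf g}$, and non-simple $A$ introduces further multiplicative relations among the $\alpha_i$. Consequently the Corvaja--Zannier theorem does not apply in its cleanest form to $(a_n)$ directly: one has to cluster the roots by the equivalence $\alpha\sim\alpha' \Leftrightarrow \alpha/\alpha'\in\mu_\infty$, work with the $L$-polynomial (zeta numerator) of $A$ rather than the raw expansion, and verify the non-degeneracy hypotheses modulo this equivalence. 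A related subtlety is that divisibility is only asserted for \emph{infinitely many} $n$, not all; the role of ``sufficiently divisible cardinality'' is to reduce this to divisibility along a full arithmetic progression, which is what the Corvaja--Zannier result is designed to accept.
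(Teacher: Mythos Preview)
Your proposal follows essentially the same route as the paper: Corvaja--Zannier applied to the point-count recurrences, followed by Tate's isogeny theorem. The paper's sketch is terse---it just names these two ingredients and says one must ``exploit the special shape of the Laurent polynomial'' associated to $R(n)=\prod_j(\alpha_j^n-1)$---so your outline is actually more detailed than what the paper records.

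One correction to your phrasing of the middle step. You write that Corvaja--Zannier yields an embedding of the multiset of characteristic roots of $(a_n)$ into that of $(b_n)$. That is not what the theorem gives, and it is false in general: with $a_n=\alpha^n-1$ and $b_n=\alpha^{2n}-1$ one has $a_n\mid b_n$, yet the root $\alpha$ of $(a_n)$ does not occur among the roots $\{1,\alpha^2\}$ of $(b_n)$. What Corvaja--Zannier actually produces (in the paper's formulation) is a factorization $F_Q\cdot F_a=F_b$ in the Laurent polynomial ring $\mathbb C[\Gamma]$, where $\Gamma$ is the (torsion-free, by the ``sufficiently divisible'' hypothesis) group generated by all the $\alpha_i,\tilde\alpha_j$. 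The ``special shape'' is then that both $F_a=\prod_i(x^{\alpha_i}-1)$ and $F_b=\prod_j(x^{\tilde\alpha_j}-1)$ are products of binomials in the UFD $\mathbb C[\Gamma]\simeq\mathbb C[x_1^{\pm1},\dots,x_r^{\pm1}]$, and one matches the $\alpha_i$ to the $\tilde\alpha_j$ by comparing irreducible factors of these products rather than by comparing monomials of the expanded polynomials. This sidesteps the $2^{2\mathsf g}$ subset-product coincidences you flag: you never need to expand. Once the Frobenius eigenvalues are matched (up to the ambiguity you note), the equality of characteristics and of characteristic polynomials over a common finite field, hence Tate, go through exactly as you say.
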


The proof of this result is based on the theorem of Tate:
$$
{\rm Hom}(A,\tilde{A})\otimes \mathbb Z_{\ell} =
{\rm Hom}_{\mathbb Z_{\ell}[{\rm Fr}]}(T_{\ell}(A),T_{\ell}(\tilde{A}))
$$
and the following, seemingly unrelated, 
theorem concerning divisibilities
of values of {\em recurrence sequences}.

Recall that a {\em linear recurrence} is 
a map $R\,:\, \N\ra \C$ such that
$$
R(n+r) = \sum_{i=0}^{r-1} a_i R(n+i), 
$$
for some $a_i\in \C$ and all $n\in \N$.  
Equivalently, 
\begin{equation}
\label{eqn:Rn}
R(n)=\sum_{\gamma \in \Gamma^0} c_\gamma(n) \gamma^n,
\end{equation}
where $c_\gamma\in \C[x]$ and $\Gamma^0\subset \C^{\times}$ is a finite set of {\em roots} of $R$.
Throughout, we need only {\em simple} recurrences, i.e., those where the {\em characteristic polynomial} 
of $R$ has no multiple roots so that $c_{\gamma}\in \C^{\times}$, for all $\gamma\in \Gamma^0$. 
Let $\Gamma\subset \C^{\times}$ be the group generated by $\Gamma^0$. In our applications 
we may assume that it is torsion-free. Then there is an isomorphism of rings 
$$
\left\{\text{Simple recurrences with roots in }  \Gamma \right\} 
\Leftrightarrow \C[\Gamma], 
$$
where $\C[\Gamma]$ is the ring of Laurent polynomials with exponents in
the finite-rank $\Z$-module $\Gamma$. 
The map 
$$
R\mapsto F_R\in \C[\Gamma]
$$ 
is given by
$$
R\mapsto F_R:= \sum_{\gamma \in \Gamma^0} c_\gamma x^{\gamma}. 
$$

\begin{theo}[Corvaja--Zannier \cite{corvaja-z}]
\label{thm:corv}
Let $R$ and $\tilde{R}$ be simple linear recurrences
such that
\begin{enumerate}
\item $R(n), \tilde{R}(\tilde{n})\neq 0$, for all $n,\tilde{n}\gg 0$;
\item the subgroup $\Gamma\subset \C^{\times}$ generated by
the roots of $R$ and $\tilde{R}$ is torsion-free;
\item there is a finitely-generated subring $\mathfrak A\subset\C$
with $R(n)/\tilde{R}(n)\in \mathfrak A$, for infinitely many $n\in \N$.
\end{enumerate}
Then 
$$
\begin{array}{rcc}
Q\,:\, \N & \ra & \C\\
n         & \mapsto &  R(n)/\tilde{R}(n)
\end{array}
$$ 
is a simple linear recurrence. 
In particular, $F_Q\in \C[\Gamma]$ and 
$$
F_Q \cdot F_{\tilde{R}}=F_R.  
$$
\end{theo}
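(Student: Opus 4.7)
The plan is to follow the Corvaja--Zannier strategy based on Schmidt's subspace theorem (in Schlickewei's $S$-adic form). The heuristic is that the integrality condition (3), holding on an infinite set, should force so much $S$-unit cancellation between the power-sum representations of $R$ and $\tilde R$ that the quotient itself becomes a power sum indexed by $\Gamma$.

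First I would reduce to a number field setting. Since $\mathfrak A\subset\C$ is finitely generated, one can specialize all transcendental parameters among the roots $\gamma\in\Gamma^0$ and the coefficients $c_\gamma$, $\tilde c_{\tilde\gamma}$ at a sufficiently generic maximal ideal of a model of $\mathfrak A$, preserving both nonvanishing hypothesis (1) on a cofinal set of $n$ and the integrality of $R(n)/\tilde R(n)$. After this reduction, everything lives in a number field $K$ and the group $\Gamma$ is a finitely generated torsion-free subgroup of $K^\times$.

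Second, I would enlarge a finite set $S$ of places of $K$ (containing the archimedean ones) so that every $\gamma\in\Gamma$ is an $S$-unit and all $c_\gamma, \tilde c_{\tilde\gamma}$ are $S$-integers. After ordering the roots of $\tilde R$ by $v$-adic absolute value for a distinguished $v\in S$, I would single out a dominant root $\tilde\gamma_0$ and formally expand
$$
\frac{R(n)}{\tilde R(n)}=\frac{1}{\tilde c_0\tilde\gamma_0^n}\sum_{\gamma}c_\gamma\gamma^n\cdot\Bigl(1+\sum_{\tilde\gamma\neq\tilde\gamma_0}\frac{\tilde c_{\tilde\gamma}}{\tilde c_0}\Bigl(\frac{\tilde\gamma}{\tilde\gamma_0}\Bigr)^n\Bigr)^{-1},
$$
truncating to a long but finite power-sum approximation. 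The integrality hypothesis, together with the product-formula, then gives a system of linear $S$-unit inequalities to which the $p$-adic subspace theorem applies, producing a finite union of arithmetic progressions on which a nontrivial linear relation
$$
R(n)=P(n)\,\tilde R(n)
$$
holds with $P$ a power sum in roots from $\Gamma$. By the Skolem--Mahler--Lech theorem (or directly by Vandermonde nondegeneracy, since distinct roots yield $\C$-linearly independent exponentials), such an identity on an arithmetic progression in the torsion-free setting of (2) extends to an identity for all $n$. Hence $Q(n):=R(n)/\tilde R(n)=P(n)$ is itself a simple linear recurrence with roots in $\Gamma$, and the ring isomorphism between simple recurrences supported on $\Gamma$ and $\C[\Gamma]$ immediately translates this to $F_Q\cdot F_{\tilde R}=F_R$.

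The hardest step will be the subspace-theorem input: choosing the correct system of linear forms and verifying its nondegeneracy, which typically requires a delicate case analysis depending on the comparison between dominant roots of $R$ and $\tilde R$ at each $v\in S$. Hypothesis (2), the torsion-freeness of $\Gamma$, is essential here: it prevents spurious periodic relations among roots of unity that would otherwise allow integrality to persist on arithmetic progressions without producing a global identity, and it ensures that the passage from an identity on a progression to a global identity goes through without picking up a cyclotomic obstruction.
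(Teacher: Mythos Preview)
The paper does not actually prove this theorem: it is quoted as a result of Corvaja--Zannier and only accompanied by the remark that it ``is actually an application of a known case of the Lang--Vojta conjecture \ldots\ in this case, one is interested in subvarieties of algebraic tori and the needed result is \emph{Schmidt's subspace theorem}.'' So there is no proof in the paper for your attempt to be compared against.

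That said, your sketch is faithful to the Corvaja--Zannier strategy the paper alludes to: specialize to a number field, make the roots $S$-units, expand $R/\tilde R$ as a truncated power sum via the geometric series in the dominant root of $\tilde R$, and feed the resulting $S$-integrality inequalities into the $p$-adic subspace theorem to obtain a linear relation on an infinite set, then propagate it globally using torsion-freeness of $\Gamma$. Two cautions. First, the specialization step is not quite as innocent as you make it sound: you must argue that a \emph{generic} specialization preserves the multiplicative independence encoded in hypothesis (2), not just nonvanishing, since otherwise the subspace-theorem nondegeneracy fails after reduction; this is a standard but nontrivial Masser-type specialization argument. Second, the ``finite union of arithmetic progressions'' output of the subspace theorem is not automatic---what the subspace theorem hands you is that the solutions lie in finitely many proper linear subspaces, and extracting from this an identity $R(n)=P(n)\tilde R(n)$ on an infinite set requires iterating the argument and controlling vanishing subsums, which is precisely the delicate case analysis you flag at the end. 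Your identification of that as the crux is correct.
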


This very useful theorem concerning divisibilities
is actually an application of a known case of the Lang--Vojta conjecture
concerning nondensity of integral points on 
``hyperbolic''  varieties, i.e., quasi-projective varieties of log-general type. 
In this case, one is interested in subvarieties of algebraic tori and the needed result is 
{\em Schmidt's subspace theorem}. Other applications of this result to integral points and
diophantine approximation are discussed in \cite{bilu}, 
and connections to Vojta's conjecture in  \cite{silverman1}, \cite{silverman}.

A rich source of interesting simple linear recurrences is geometry over finite fields. 
Let $X$ be a  smooth projective variety over $k_1=\mathbb{F}_q$ of dimension $d$, 
$\bar{X}=X\times_{k_1}\bar{k}_1$,
and let $k_n/k_1$ be the unique extension of degree $n$. 
Then 
$$
\# X(k_n):= \tr(\Fr^n) =\sum_{i=0}^{2d} (-1)^ic_{ij} \alpha_{ij}^n,
$$
where $\Fr$ is Frobenius acting on \'etale cohomology 
$\rH^*_{et}(\bar{X},\Q_{\ell})$, with $\ell\nmid q$, and $c_{ij}\in \C^{\times}$. 
Let $\Gamma^0:=\{\alpha_{ij}\}$ be the set of corresponding eigenvalues.
and $\Gamma_X\subset \C^{\times}$ the multiplicative group
generated by $\alpha_{ij}$. It is torsion-free provided 
the cardinality of $k_1$ is sufficiently divisible.

For example, let $A$ be an abelian variety over $k_1$, 
$\{\alpha_j\}_{j=1,\ldots, 2\mathsf g}$ the set of eigenvalues
of the Frobenius on $\rH^1_{et}(\bar{A}, \Q_{\ell})$, for $\ell\neq p$, 
and $\Gamma_A\subset \mathbb C^{\times}$ the multiplicative subgroup spanned by the $\alpha_j$. 
Then
\begin{equation}
\label{eqn:rnn}
R(n):=\#A(k_n) = \prod_{j=1}^{2\mathsf g} (\alpha_j^n-1).
\end{equation}
is a simple linear recurrence with roots in $\Gamma_A$. 
Theorem~\ref{thm:ab-divi} follows by applying Theorem~\ref{thm:corv} to 
this recurrence and exploiting the special shape of the  
Laurent polynomial associated to \eqref{eqn:rnn}. 

 \

We now sketch a proof of Theorem~\ref{thm:torel}, assuming for simplicity that 
$C$ be a nonhyperelliptic curve of genus $\mathsf g(C)\ge 3$. 

\

{\em Step 1.}
For all finite fields $k_1$ with sufficiently many elements ($\ge c\mathsf g^2$) 
the group $J(k_1)$ is generated by $C(k_1)$, by \cite[Corollary 5.3]{bt-torelli}.
Let 
$$
k_1\subset k_2\subset \ldots \subset k_n\subset \ldots
$$
be the tower of degree 2 extensions. 
To \emph{characterize} $J(k_n)$ it suffices to 
characterize $C(k_n)$.

\

{\em Step 2.} For each $n\in \N$, the abelian group
$J(k_{n})$ is generated by $c \in C(k)$ such that
there exists a point $c'\in C(k)$ with 
$$
c+c'\in J(k_{n-1}).
$$

\

{\em Step 3.}
Choose $k_1,\tilde{k_1}$ (sufficiently large) 
such that 
$$
\Psi(J(k_1))\subset \tilde{J}(\tilde{k}_1)
$$
Define $C(k_n)$, resp. $\tilde{C}(\tilde{k}_n)$, intrinsically, using only the group- and set-theoretic
information as above.
Then 
$$
\Psi(J(k_n))\subset \tilde{J}(\tilde{k}_n), \quad  \text{ for all } \quad n\in \N.
$$
and
$$
\# J(k_n) \mid \# \tilde{J}(\tilde{k}_n).
$$
To conclude the proof of Theorem~\ref{thm:torel} 
it suffices to apply Theorem~\ref{thm:corv} and Theorem~\ref{thm:ab-divi} 
about divisibility of recurrence sequences.

\
 
One of the strongest and somewhat counter-intuitive results in this area 
is a theorem of Tamagawa:

\begin{theo}\cite{tamagawa-pi}
There are at most finitely many (isomorphism classes of) 
curves of genus $\mathsf g$ over $k=\bar{\F}_p$ 
with given (profinite) geometric fundamental group. 
\end{theo}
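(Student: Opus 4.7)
The plan is to argue by contradiction. Suppose that for the fixed profinite group $\pi$ there exist infinitely many pairwise non-isomorphic smooth projective curves $C_i$ of genus $\mathsf g$ over $\bar{\F}_p$ with $\pi_1^{\text{geom}}(C_i)\simeq \pi$. Their moduli points $[C_i]\in\mathcal{M}_\mathsf g(\bar{\F}_p)$ form an infinite set, so their Zariski closure contains an irreducible subvariety $S\subset \mathcal{M}_\mathsf g$ of dimension $\ge 1$. After passing to a finite cover of $S$ (trivializing the relevant level structure so that a universal family exists), I obtain a smooth family $f\colon \mathcal{C}\to S$ whose fiber $\mathcal{C}_{s_i}\simeq C_i$ realizes the hypothesis on a Zariski-dense set of closed points $\{s_i\}\subset S$.

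Next I would apply Grothendieck's specialization theorem to the family $f$. For each specialization $\eta \rightsquigarrow s_i$ from the generic point of $S$ there is a continuous surjection $\pi_1^{\text{geom}}(\mathcal{C}_\eta)\twoheadrightarrow \pi_1^{\text{geom}}(\mathcal{C}_{s_i})\simeq \pi$. On the prime-to-$p$ quotient the specialization map is an isomorphism, so $\pi^{(p')}$ is rigid along the family and in fact is determined already by $\mathsf g$. Since $\pi$ is topologically finitely generated, I would then invoke a rigidity statement for such profinite groups (every continuous surjection of a topologically finitely generated profinite group onto itself is an isomorphism, using that finite-index subgroups are open by Nikolov--Segal) in order to arrange that $\pi_1^{\text{geom}}(\mathcal{C}_\eta)$ is itself isomorphic to $\pi$.

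The next step would be to extract a geometric consequence from the fact that the whole profinite group, and not just some finite quotient, is locally constant in the family. The $p$-rank $\sigma(C)$ can be read off from the $\Z_p$-rank of $\pi^{\text{ab}}\otimes\Z_p$, and more generally the isomorphism class of $\pi^{(p)}$ (as a filtered pro-$p$-group with its Frattini/lower central filtration) refines the Newton polygon and $p$-rank stratifications of $\mathcal{M}_\mathsf g$. Combining this with the fact that the étale torsors of $\mathcal{C}$ classified by each finite quotient of $\pi$ spread out to a family of curves of bounded genus over $S$ via the Hurwitz moduli interpretation, one obtains an entire tower of families $\tilde{\mathcal{C}}_G\to S$, all of which specialize compatibly with a constant isomorphism class. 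A suitable combination of Hurwitz-theoretic rigidity (there are only finitely many admissible covers of bounded genus and a fixed Galois group) and positivity of $\dim S$ should force the original family $\mathcal{C}\to S$ to be isotrivial, contradicting our choice of $S$.

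The main obstacle, and indeed the technical core of Tamagawa's theorem, lies precisely in handling the pro-$p$ part of $\pi$ in positive characteristic. In characteristic zero Grothendieck's specialization gives an isomorphism of fundamental groups and rigidity is essentially automatic; in characteristic $p$, however, the pro-$p$ quotient can genuinely jump under specialization through wild ramification in $p$-power covers, so one cannot simply quote a specialization-is-isomorphism result. The subtle point will be to quantify exactly which pro-$p$ quotients are \emph{forced} to remain constant when the full group is constant, to leverage this into control of the relevant Newton-polygon stratum of $\mathcal{M}_\mathsf g$, and to show that together with the prime-to-$p$ rigidity this pins down the curve up to finitely many choices — this is where a delicate study of $p$-covers along the family, rather than any soft profinite-group argument, is unavoidable.
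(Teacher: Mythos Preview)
The paper does not prove this theorem; it is simply quoted as a result of Tamagawa \cite{tamagawa-pi}, with no argument given. So there is nothing to compare against in the paper itself.

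That said, your outline is in the right spirit and matches the broad architecture of Tamagawa's actual proof: pass to a positive-dimensional family in $\mathcal M_{\mathsf g}$, use Grothendieck specialization, and try to force isotriviality. But your write-up contains a genuine gap at the crucial step, and you essentially acknowledge it in your last paragraph. Two specific points:

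\emph{First}, the invocation of Hopficity/Nikolov--Segal is circular as written. Specialization gives surjections $\pi_1^{\rm geom}(\mathcal C_\eta)\twoheadrightarrow \pi_1^{\rm geom}(\mathcal C_{s_i})\simeq \pi$, but the Hopfian property (every self-surjection is an isomorphism) only applies once you already know $\pi_1^{\rm geom}(\mathcal C_\eta)\simeq \pi$. In positive characteristic these groups are \emph{not} determined by $\mathsf g$, so this surjection can have nontrivial kernel; you cannot conclude that the generic fiber has fundamental group $\pi$.

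\emph{Second}, the substantive content of Tamagawa's theorem is exactly the ``delicate study of $p$-covers'' you defer. What he actually uses is a refinement of the $p$-rank: for each finite \'etale cover $\tilde C\to C$ one has the $p$-rank of $\tilde C$ (equivalently, generalized Hasse--Witt invariants), and all of these are read off from $\pi$. The hard input is Raynaud's theory of theta divisors for Frobenius-destabilized vector bundles, which shows that in a non-isotrivial one-parameter family of curves one can always find a prime-to-$p$ cyclic \'etale cover whose $p$-rank is non-constant along the family. This produces two fibers whose fundamental groups are provably non-isomorphic, and that is the contradiction. Your ``Hurwitz-theoretic rigidity'' sketch does not capture this mechanism; the finiteness of covers with bounded data is not what is doing the work.

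So: correct framework, but the proof is not complete without the Raynaud/Hasse--Witt input, and the Hopfian step should be removed.
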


On the other hand, in 2002 we proved:

\begin{theo} \cite{bt-unram}
Let $C$ be a hyperelliptic curve of genus $\ge 2$  over 
$k=\bar{\F}_p$, with $p\ge 5$. Then  
for every curve $C'$ over $k$ there exists an \'etale cover
$\pi\colon\tilde{C}\ra C$ and surjective map $\tilde{C}\ra C'$. 
\end{theo}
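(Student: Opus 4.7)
The strategy is to realize $\tilde{C}$ as an irreducible component of the normalized fibre product of $C$ with a suitable cover of $C'$ over $\P^1$, exploiting the fact that the hyperelliptic map $\phi \colon C \to \P^1$ has at least six branch points.

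I would begin by invoking the positive-characteristic version of Belyi's theorem (valid since $p \ge 5$) to produce a finite morphism $g_0 \colon C' \to \P^1$ whose branch locus consists of at most three points. Let $B \subset \P^1$ denote the branch locus of $\phi$; since $|B| = 2\mathsf{g}(C) + 2 \ge 6$, after post-composing $g_0$ with an element of $\mathrm{PGL}_2(k)$ I may assume the three branch points of $g_0$ form a subset $T \subset B$. Next I would modify $g_0$ by passing to a finite cover $C'' \to C'$ so as to obtain a morphism $g \colon C'' \to \P^1$, still branched only over $T$, but now with every ramification index above $T$ equal to exactly $2$. This is engineered by an iterated construction of tame cyclic covers of $\P^1$ branched only over $T$ (available because $p \ne 2$ and $p$ is coprime to all the ramification indices involved): passage to double covers halves the even part of the ramification indices at points above $T$, while Kummer covers of odd degree handle the odd part, and one stops as soon as each local index is exactly $2$.

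With this in hand, set $\tilde{C}$ equal to the normalization of an irreducible component of $C \times_{\P^1} C''$. The second projection $\tilde{C} \to C'' \to C'$ then provides the required finite surjection onto $C'$, while the first projection $\pi \colon \tilde{C} \to C$ is \'etale by the following local analysis. Outside $\phi^{-1}(B)$ the map $\phi$ is already \'etale, so its base change $\pi$ is \'etale there. Above a point of $\phi^{-1}(B \setminus T)$, the map $g$ is \'etale and the local fibre product equation $s^2 = u$ presents $\pi$ locally as the identity on $\mathrm{Spec}\,k[s]$. Above a point of $\phi^{-1}(T)$, both $\phi$ and $g$ are ramified of index exactly $2$, and $\mathrm{Spec}\,k[s,u]/(s^2 - u^2)$ normalizes to two disjoint smooth branches $s = \pm u$, each mapping isomorphically onto the local chart on $C$. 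This is the classical instance of Abhyankar's lemma where the two index-$2$ ramifications cancel.

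The main obstacle is the second step, namely producing $C''$ so that $g$ has ramification index \emph{exactly} $2$ at every point over $T$ simultaneously. Reducing even indices by iterated double covers is routine, but arranging that the terminal index be $2$ rather than $1$, eliminating odd factors without introducing new ramification outside $T$, and carrying out the reductions compatibly across all three points of $T$, requires careful ramification bookkeeping that relies essentially on $p \ne 2$. An additional nontrivial input, used in Step~1, is the positive-characteristic Belyi theorem; with both in place, the local \'etale analysis of the final step is a direct computation.
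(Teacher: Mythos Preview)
The survey does not itself prove this theorem; it simply cites \cite{bt-unram}. Your outline reproduces the strategy of that reference: invoke a positive-characteristic Belyi theorem to obtain a tame map $C'\to\P^1$ branched over three points, use the triple transitivity of $\PGL_2(k)$ to move those three points into the hyperelliptic branch locus $B$ (which has $2\mathsf g+2\ge 6$ points), pass to a cover $C''\to C'$ so that every ramification index of $C''\to\P^1$ above $T$ equals $2$, and then normalize the fibre product with the hyperelliptic double cover, using Abhyankar's lemma to conclude that $\tilde C\to C$ is \'etale. You have also correctly isolated the one genuinely delicate step---forcing all local indices over $T$ to be exactly $2$ simultaneously across the three branch points---and this ramification bookkeeping, together with the tame Belyi input, is precisely where the hypothesis $p\ge 5$ is used in the original argument.
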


This shows that the geometric fundamental groups of 
hyperbolic curves are ``almost'' independent of the curve:
every such $\pi_1(C)$ has a subgroup of small index and such that the quotient by this
subgroup is almost abelian, surjecting onto 
the fundamental group of another curve $C'$.

This relates to the problem of couniformization for
hyperbolic curves (see \cite{bt-unram}). 
The Riemann theorem says that
the unit disc in the complex plane 
serves as a universal covering for all 
complex projective curves of genus $\geq 2$, simultaneously. 
This provides a  canonical embedding 
of the fundamental group of a curve into
the group of complex automorphisms of the disc, which is isomorphic to $\PGL_2(\R)$.
In particular, 
it defines a natural embedding of the 
field of rational functions on the curve into 
the field of meromorphic functions on the disc.
The latter is unfortunately is too large to be of any help in
solving concrete problems.

However, in some cases there is an algebraic
substitute. For example, in the class of modular curves
there is a natural pro-algebraic object $Mod$ 
(introduced by Shafarevich) 
which is given by a tower of modular curves; 
the corresponding pro-algebraic field,
which is an inductive union $M$ of the fields of rational functions on
modular curves. Similarly to the case of a disc the space $Mod$ has
a wealth of of symmetries which contains a product  $\prod_p \SL_2(\Z_p)$
and the absolute Galois group $\Gal(\bar{\Q}/\Q)$.

The above result alludes to the existence of 
a similar disc-type algebraic object
for all hyperbolic curves defined over $\bar{\F}_p$
(or even for arithmetic hyperbolic curves).

For example consider $C_6$ given by 
$y^6 = x(x-1)$ over $\F_p$, with $p\neq 2,3$,
and define $\tilde{C}_6$ as a pro-algebraic universal covering
of $C_6$. Thus $\bar{\F}_p(\tilde{C}_6) = \bigcup \bar{\F}_p(C_i)$,
where $C_i$ range over all finite geometrically nonramified
coverings of $C_6$. Then $\bar{\F}_p(\tilde{C}_6)$ contains all
other fields $\bar{\F}_p (C)$, where $C$ is an arbitrary curve
defined over some $ \F_q\subset  \bar{\F}_p $. Note that it also implies that
\'etale fundamental group $\pi_1(C_6)$ contains a subgroup of finite index
which surjects onto $\pi_1(C)$ with the action of
$\hat{\Z} = \Gal(\bar{\F}_p/\F_q)$.

The corresponding results in the case of curves over
number fields $K\subset \bar{\Q}$ 
are weaker, but even in the weak form they are quite intriguing.

\bibliographystyle{smfplain}
\bibliography{msri}

\end{document}